\newcommand{\added}[1]{{#1}}
\definecolor{newcolor}{RGB}{0,0,180} 
\newtheorem{lem}{Lemma}
\newtheorem{prp}{Proposition}
\newtheorem{rmk}{Remark}
\newtheorem{thm}{Theorem}
\DeclareMathOperator*{\argmin}{arg\,min}
\DeclareMathOperator*{\essinf}{ess\,inf}
\newcommand{\eqdef}{:=}
\DeclareMathOperator*{\LR}{LR}
\begin{document}
	

	
    \title{\bf Anytime Validity is Free: Inducing Sequential Tests}
    \author{Nick W. Koning\footnote{Corresponding author n.w.koning@ese.eur.nl.}\ \& Sam van Meer\footnote{Both authors contributed equally to this work.}\hspace{.2cm}\\
    Econometric Institute, Erasmus University Rotterdam, the Netherlands}
    \maketitle
    \vspace{-.5cm}
    \begin{abstract}
        Anytime valid sequential tests permit us to stop testing based on the current data, without invalidating the inference.
        Given a maximum number of observations $N$, one may believe this must come at the cost of power when compared to a conventional test that waits until all $N$ observations have arrived.
        Our first contribution is to show that this is false: for any valid test based on $N$ observations, we \added{show how to construct} an anytime valid sequential test that matches it after $N$ observations.
        \added{Our second contribution is that we may continue testing by using the outcome of a $[0, 1]$-valued test as a conditional significance level in subsequent testing}, leading to an overall procedure that is valid at the original significance level.
        This shows \added{that} anytime validity and optional continuation are readily available in traditional testing, without requiring explicit use of e-values.
        We illustrate this by deriving the anytime valid sequentialized $z$-test and $t$-test, which at time $N$ coincide with the traditional $z$-test and $t$-test.
        \added{Finally, we characterize the SPRT by invariance under test induction, and also show under an i.i.d. assumption that the SPRT is induced by the Neyman-Pearson test for a tiny significance level and huge $N$.}
    \end{abstract}
    
	\noindent%
	{\it Keywords:} sequential testing, anytime validity, optional continuation, randomized testing, e-values, sequential $z$-test, sequential $t$-test.
	\vspace{.5cm}

    \section{Introduction}
        Suppose that we are to observe $N$ observations $X^N := (X_1, \dots, X_N)$.
        We are interested in testing the null hypothesis that the data is sampled from the distribution $\mathbb{P}$, against the alternative hypothesis that it is sampled from the distribution $\mathbb{Q}$.
        Traditionally, we wait until all $N$ observations have been collected, and then perform a test which either rejects the hypothesis or not.

        It is common to \added{define} such a test as a random variable $\phi_N$ on the interval $[0, 1]$, that depends on the data $X^N$.
        Here, $\phi_N = 1$ represents a rejection of the hypothesis and $\phi_N = 0$ a non-rejection.
        If $0 < \phi_N < 1$, the value of $\phi_N$ is traditionally interpreted as the probability with which we may subsequently reject by using external randomization.
        However, we have no intention to randomize, and it will suffice, for now, to view a test outcome in $(0, 1)$ as `progress' towards a rejection.

        It is standard practice to use a test that is valid at some level of significance $\alpha > 0$.
        This means that the probability that it rejects the null hypothesis is at most $\alpha$ if the null hypothesis is true.
        This can be translated to a condition on the expectation of $\phi_N$:
        \begin{align}\label{eq:type-1_intro}
            \mathbb{E}^{\mathbb{P}}[\phi_N] 
                \leq \alpha,
        \end{align}
        for every data-independent $N$.

        \added{An unfortunate feature of traditional testing is that we must sit on our hands and wait until all $N$ observations have arrived to preserve this validity.}
        One may naively believe that, after seeing $n < N$ observations, we could simply use the test $\phi_n$ that we would have used if we had set out to collect $n$ observations.
        However, this makes the number of observations data-dependent, so that the resulting procedure is generally not valid.
        For this to be allowed, a sequence of tests $\phi_1, \phi_2, \dots$ should not just be valid, but \emph{anytime valid}:
        \begin{align}
            \mathbb{E}^\mathbb{P}[\phi_\tau] \leq \alpha,
        \end{align}
        for every data-dependent (`stopping') time $\tau$.

        Following seminal work by Robbins, Darling, Wald and others in the previous century, there has recently been a renaissance in anytime valid testing \citep{howard2021time, shafer2021testing, ramdas2023game, grunwald2024safe}.
        Such anytime valid sequential tests are typically of a different form than traditional tests.
        The most popular sequential test remains the Sequential Probability Ratio Test (SPRT), which equals $\alpha\textnormal{LR}_n \wedge 1$ at every observation $n$, where $\textnormal{LR}_n$ denotes the likelihood ratio between $\mathbb{Q}$ and $\mathbb{P}$ of the $n$ observations.\footnote{The name SPRT is usually reserved for the `binary' version of this test: $\mathbb{I}\{\alpha\textnormal{LR}_n \geq 1\} \leq \alpha\textnormal{LR}_n \wedge 1$, and can be slightly further improved \citep{fischer2024improving, koning2024continuoustesting}.}
        
        At time $N$, this sequential test is usually substantially less powerful than the optimal `Neyman-Pearson' test.
        For example, in a standard normal location setting where we test $\mu = 0$ against $\mu = .3$ with $N = 100$ observations, the $z$-test rejects with probability 91\% while this sequential test rejects at any time $n \leq N$ with probability just 79\%.\footnote{This increases to 84\% using external randomization at the final observation $N$.}
        Moreover, this power comparison is very generous towards the sequential test, as it uses oracle knowledge of the alternative $\mu$, whereas the $z$-test does not.
        If we were to sequentially learn the alternative with the maximum likelihood estimator, then its power is just 47\%.

        \subsection{Inducing a sequential test}
            The large power gap between the state-of-the-art sequential test and the traditional Neyman-Pearson test seems to be generally viewed as the cost of anytime validity.
            Indeed, this additional anytime validity must surely come at a price?
        
            Our first contribution is to show that \emph{this is false}: anytime validity can be obtained for free.
            In particular, for any valid test $\phi_N$ we show how to construct a sequence of tests $\phi_1', \dots \phi_N'$ that is anytime valid and matches $\phi_N$ at the end: $\phi_N' = \phi_N$.

            Moreover, the sequential test is of a simple form: it simply equals the conditional expectation of $\phi_N$ given the current data $X^n$ under the null hypothesis $\mathbb{P}$:
            \begin{align*}
                \phi_n'
                    := \mathbb{E}^{\mathbb{P}}[\phi_N \mid X^n],
            \end{align*}
            for all $n \geq 0$.
            This may be interpreted as tracking the conditional probability that $\phi_N$ will reject, including randomization at the final step to come to a binary decision if necessary.
            We illustrate this for the one-sided $z$-test and $t$-test in  Figure~\ref{fig:intro} for a typical sample of i.i.d. normal data with a power of 91\% at $N = 100$ observations.

            \begin{figure}
                \centering
                \includegraphics[width=9.5cm]{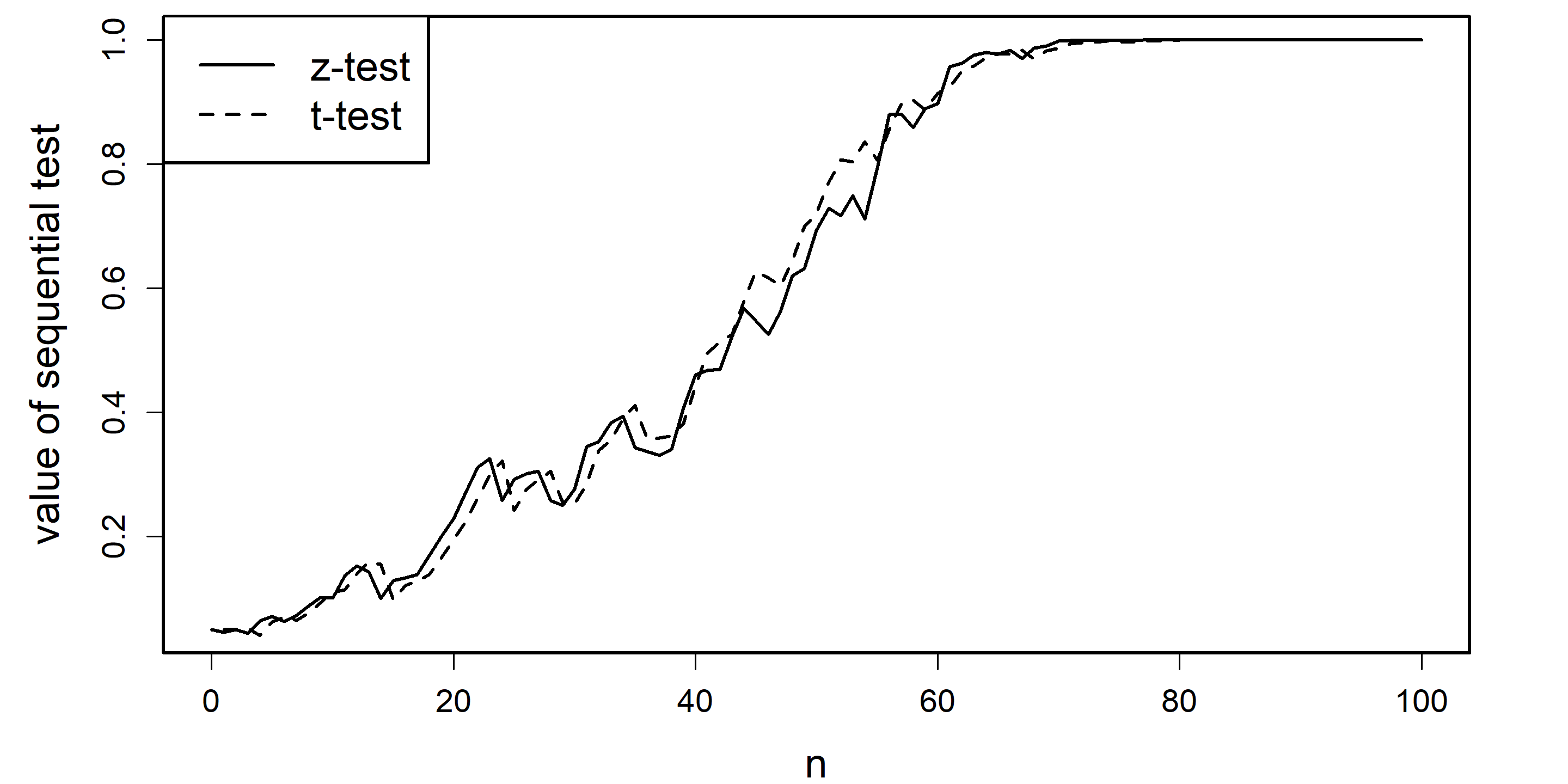}
                \caption{Illustration of the sequential $z$-test and $t$-test over $100$ observations sampled from $\mathcal{N}(.3, 1)$.}
                \label{fig:intro}
            \end{figure}

            We formalize the induction of sequential tests in Section \ref{sec:inducing_sequential_test}.
            There, we also show that every admissible anytime valid sequential test can be induced from some `terminal test'.
            Moreover, we generalize to composite hypotheses and study composite hypotheses that possess the Monotone Likelihood Ratio property.
            In Section \ref{sec:ztest} and Section \ref{sec:t-test}, we apply this to the $z$-test and $t$-test, respectively.

        \subsection{Using test outcomes as conditional significance levels}
            A natural next question is \emph{what we should do with a test outcome $\phi_n \in (0, 1)$}: are such outcomes just entertainment while we wait for the eventual rejection decision of $\phi_N$?
            This is important, because even though $\phi_n' > 0.999$ for $n > 77$ in Figure~\ref{fig:intro}, we technically have that $\phi_n' \in (0, 1)$ for all $n < N$.
            Indeed, in this example the final observation may be arbitrarily small or large with some tiny probability, swinging $\phi_N$ to a rejection or non-rejection.

            Our second contribution is to show the merit of test outcomes in $(0, 1)$ through \emph{optional continuation}: the outcome of a $(0, 1)$-valued test may be used as a \added{\emph{conditional significance level}} in subsequent testing.
            In particular, given the outcome of a test $\phi_1$ that is (unconditionally) valid at significance level $\alpha$:
            \begin{align*}
                \mathbb{E}^{\mathbb{P}}[\phi_1] \leq \alpha,
            \end{align*}
            we may initialize a new test $\phi_{2}$ that is conditionally valid at conditional significance level $\phi_1$:
            \begin{align*}
                \mathbb{E}^{\mathbb{P}}[\phi_{2} \mid \phi_1] \leq \phi_1.
            \end{align*}
            The new test $\phi_{2}$ is then (unconditionally) valid at (unconditional) significance level $\alpha$:
            \begin{align*}
                \mathbb{E}^{\mathbb{P}}[\phi_{2}]
                    = \mathbb{E}^{\mathbb{P}}[\mathbb{E}^{\mathbb{P}}[\phi_{2} \mid \phi_1]]
                    \leq \mathbb{E}^{\mathbb{P}}[\phi_1]
                    \leq \alpha.
            \end{align*}
            Overall, this supports the idea that an outcome $\phi_1 \in (0, 1)$ can be interpreted as `progress towards a rejection'.
            
            This idea also gives a new interpretation to tests on $[0, 1]$, beyond the traditional randomized testing interpretation.
            In fact, randomized testing may be viewed as a special case, where a $[0, 1]$-valued test $\phi$ is implicitly followed by a test at conditional level $\phi$ that uses uninformative data and so `randomly' rejects with probability $\phi$.

            We formalize these ideas in Section \ref{sec:test_as_significance_level}.
            There, we show how this procedure may be used to modify a sequential experiment at a stopping time $\tau$.
            Moreover, we generalize the procedure to composite hypotheses, and discuss adaptively switching to a different baseline significance level, which allows one to continue after a rejection at level $\alpha$ in order to obtain a rejection at some level $\alpha' < \alpha$.
            Furthermore, we discuss the relationship to optional continuation with e-values.

        \subsection{Contributions to the anytime validity literature}
            A more abstract contribution of our work is the establishment of a bridge between classical testing and the recent literature on anytime validity and optional continuation.
            In the current literature, anytime validity and optional continuation are often presented as the primary advantages of e-values \citep{ramdas2023game, grunwald2024safe}.
            Indeed, inference based on e-values is frequently described as a different paradigm from inference based on tests and p-values.
            Anytime validity is then viewed as a natural property of the e-value paradigm, which is not easily available in the traditional paradigm.
            Our work dispels this myth: both anytime validity and optional continuation come naturally to tests, and it is not necessary to (explicitly) introduce e-values.

            Our work also adds a new tool to the toolbox of anytime valid sequential testing: the induction of a sequential test from a terminal test.
            This effectively constructs a sequential test in a `backward' fashion by relying on a \emph{Doob martingale}.
            This is different from the `forward' construction that is common in the literature, which relies on designing martingale-increments and sequentially accumulating these.
            To the best of our knowledge, we are the first to propose such a Doob martingale approach to actively construct anytime valid sequential tests.
            Doob martingales have been featured in the context of anytime valid sequential testing before, but only as a technical tool, such as in a proof in Section 6.3 of \citet{ramdas2022admissible} and in the construction of time uniform concentration inequalities in \citet{howard2020time}.

            In Section \ref{sec:e-values} we describe the precise connection of our work to the e-value literature.
            There, we show that the popular log-utility-optimal e-process (SPRT) is characterized by an invariance property under the operation of inducing a sequential test.
            To the best of our knowledge, this is the first motivation of this process that does not rely on an independence assumption over time.
            Moreover, we discover that the log-utility-optimal e-process (SPRT) can be induced from a limiting Neyman-Pearson test with $\alpha_N \to 0$ as $N \to \infty$, substantially generalizing a finding of \citet{breiman1961optimal}.
            Finally, our study of optional continuation for a composite hypothesis $H$ in Section \ref{sec:pointwise_continuation} suggests that for the purpose of optional continuation it is better to report a test (e-value) for every $\mathbb{P} \in H$ rather than a single test (e-value) for $H$.
            This echoes recent calls to report this collection, under the name \emph{fuzzy confidence set} \citep{koning2025optimal} or \emph{E-posterior} \citep{grunwald2023posterior}.
        
        \subsection{Other related literature}
            \citet{koning2024continuoustesting} unifies e-values and tests, arguing that e-values are merely tests disguised by rescaling from $[0, 1]$ to $[0, 1/\alpha]$, so that calls to directly use an e-value as evidence are equivalent to calls to directly report a test outcome in $[0, 1]$.
            Our work supports this call, by adding an additional interpretation to such test outcomes as a conditional significance level for subsequent experimentation.
            
            We are not the first to consider tracking the conditional probability that a test will reject.
            Indeed, it also appears in a stream of literature initiated by \citet{GordonLan01011982}, who propose to reject whenever $\phi_n' > \gamma$ for some pre-specified $\gamma$ under the name \emph{stochastic curtailment}.
            This induces a sequential test $\phi_n^{\gamma} = \mathbb{I}\{\phi_n' > \gamma\}$, which they show has a Type I error bounded by $\alpha / \gamma$.
            In subsequent literature, the induced test $\phi_n'$ appears under the name \emph{conditional power}, and is also used as a diagnostic tool to stop for futility \citep{lachin2005review}.
            The idea to use $\mathbb{E}[\phi_N \mid X^n]$ as a significance level appears in \citet{muller2004general}.
            Perhaps our key conceptual leap is that $\mathbb{E}[\phi_N \mid X^n]$ itself can be replaced by any $[0, 1]$-valued test, showing the general evidential merit of such $[0, 1]$-valued tests.
            Moreover, by applying the rich mathematical toolbox of martingales, filtrations and stopping times we can expand beyond this line of work.
            To the best of our knowledge, this stream of literature has gone entirely unnoticed in the current renaissance in anytime valid testing, and our work shows the relationship between the streams of literature.

    \section{Inducing a sequential test}\label{sec:inducing_sequential_test}
        \subsection{Setup}
            Let $\mathcal{X}$ be our sample space equipped with some sigma-algebra $\mathcal{F}$ that encapsulates all the possible information that can be obtained.
            All processes and stopping times are silently considered to be measurable with respect to a filtration $(\mathcal{F}_n)_{n \geq 0}$, where $\mathcal{F}_n \subseteq \mathcal{F}$ describes the available information at time $n \geq 0$.
            Statements such as `$\mathcal{F}_n$-measurable' may be interpreted as `known when the information $\mathcal{F}_n$ is revealed'.
            Throughout, we abbreviate the notation for sequences $(\mathcal{F}_n)_{n \geq 0}$ to $(\mathcal{F}_n)$.
            For simplicity, let $\mathcal{F}_0 = \{\emptyset, \mathcal{X}\}$ represent the information set before any data has been observed, so that we can write the expectation $\mathbb{E}[\,\cdot\,]$ for the conditional expectation $\mathbb{E}[\,\cdot \mid \mathcal{F}_0]$.
            \added{Furthermore, we define $\mathcal{F}_\infty = \sigma(\bigcup_{n \geq 0} \mathcal{F}_n)$ and note that $\mathcal{F}_\infty \subseteq \mathcal{F}$.}
            
            We define a hypothesis $H$ as a collection of probability measures $\mathbb{P} \in H$ on our sample space.
            We define a test as an \added{$\mathcal{F}$-measurable} map $\phi : \mathcal{X} \to [0, 1]$.
            We fix a level $\alpha > 0$ throughout, and say that the test $\phi$ is valid for $H$ if
            \begin{align}
                \sup_{\mathbb{P} \in H} \mathbb{E}^{\mathbb{P}}[\phi] \leq \alpha.
            \end{align}
            A sequence of tests $(\phi_n)$ adapted to the filtration $(\mathcal{F}_n)$ is said to be anytime valid for $H$ if the test $\phi_\tau$ is valid for every (possibly infinite) stopping time $\tau$ adapted to $(\mathcal{F}_n)$:  
            \begin{align}
                \sup_{\mathbb{P} \in H} \mathbb{E}^{\mathbb{P}}[\phi_\tau] \leq \alpha.
            \end{align}
            \added{Here, we define $\phi_\infty := \limsup_{n \to \infty} \phi_n$, which is $\mathcal{F}_\infty$-measurable.}
            
        \subsection{Simple hypotheses}
            In this section, we study the induction of a sequential test $(\phi_n)$ from a test $\phi$ for a simple hypothesis $H = \{\mathbb{P}\}$.
            Given an $\mathcal{F}$-measurable test $\phi$, we define its (induced) sequential test $(\phi_n)$ as
            \begin{align}
                \phi_n
                    := \mathbb{E}^{\mathbb{P}}[\phi \mid \mathcal{F}_{n}],
            \end{align}
            which is $\mathbb{P}$-almost surely uniquely defined.

            In Theorem \ref{thm:simple}, we discuss the properties that a test $\phi$ passes on to its sequential test $(\phi_n)$.
            The most important result here is that if $\phi$ is valid, then $(\phi_n)$ is anytime valid.
            Interestingly, we also obtain a strong converse: if $\phi$ is not valid, then not only is $(\phi_n)$ not anytime valid; $\phi_\tau$ is not valid for \emph{any} stopping time.
            Furthermore, if $\phi$ is measured at some (stopping) time $\tau$, then the sequential test $(\phi_n)$ equals it at time $\tau$.
            By separating out the final claim, we also wish to highlight that we can induce a sequential test $(\phi_n)$ from a test $\phi$ even if we have no hope or no intention to actually observe it.

            The proof of this result and all other omitted proofs may be found in Appendix \ref{appn:proofs}.
            
            \begin{thm}[Simple hypotheses]\label{thm:simple}
                Let $\phi$ be an $\mathcal{F}$-measurable test.
                \begin{enumerate}
                    \item If $\phi$ is valid, then $(\phi_n)$ is anytime valid.
                    \item \added{If $\phi$ is not valid, then $\phi_\tau$ is not valid for any $\tau$.}
                    \item If $\phi$ is \added{$\mathcal{F}_\tau$-measurable}, then \added{$\phi_\tau = \phi$.}
                \end{enumerate}
            \end{thm}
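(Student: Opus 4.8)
The plan is to recognize that the induced process $(\phi_n)$ is the \emph{Doob martingale} generated by $\phi$ under $\mathbb{P}$, and that all three claims follow from a single optional-sampling identity for this martingale. First I would record that, since $\phi$ takes values in $[0,1]$ it is $\mathbb{P}$-integrable, so $\phi_n = \mathbb{E}^{\mathbb{P}}[\phi \mid \mathcal{F}_n]$ is a genuine martingale with respect to $(\mathcal{F}_n)$ under $\mathbb{P}$; being uniformly bounded it is uniformly integrable. Its terminal value is $\phi_\infty = \mathbb{E}^{\mathbb{P}}[\phi \mid \mathcal{F}_\infty]$ (note $\phi$ itself need not be $\mathcal{F}_\infty$-measurable, but this is harmless since $\mathbb{E}^{\mathbb{P}}[\phi \mid \mathcal{F}_n] = \mathbb{E}^{\mathbb{P}}[\mathbb{E}^{\mathbb{P}}[\phi \mid \mathcal{F}_\infty] \mid \mathcal{F}_n]$), and L\'evy's upward convergence theorem gives $\phi_n \to \mathbb{E}^{\mathbb{P}}[\phi \mid \mathcal{F}_\infty]$ both $\mathbb{P}$-almost surely and in $L^1(\mathbb{P})$. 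In particular the $\mathbb{P}$-a.s.\ limit exists, so $\phi_\infty = \limsup_n \phi_n = \lim_n \phi_n = \mathbb{E}^{\mathbb{P}}[\phi \mid \mathcal{F}_\infty]$ $\mathbb{P}$-almost surely, matching the definition of $\phi_\infty$ adopted in the excerpt.

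The core step is the claim that, for \emph{every} stopping time $\tau$ adapted to $(\mathcal{F}_n)$ and possibly taking the value $\infty$,
\begin{align*}
    \phi_\tau = \mathbb{E}^{\mathbb{P}}[\phi \mid \mathcal{F}_\tau]
\end{align*}
holds $\mathbb{P}$-almost surely. On the event $\{\tau < \infty\}$ this is the optional sampling theorem applied to the uniformly integrable martingale $(\phi_n)$ together with its closing variable; on the event $\{\tau = \infty\}$ it reduces, via the first paragraph, to the tower property $\mathbb{E}^{\mathbb{P}}[\mathbb{E}^{\mathbb{P}}[\phi \mid \mathcal{F}_\infty] \mid \mathcal{F}_\tau] = \mathbb{E}^{\mathbb{P}}[\phi \mid \mathcal{F}_\tau]$, using $\mathcal{F}_\tau \subseteq \mathcal{F}_\infty$. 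Alternatively one can cite the general optional stopping theorem for closed martingales with allowed-infinite stopping times directly.

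Given this identity, the three parts fall out quickly. Taking $\mathbb{P}$-expectations and using the tower property together with $\mathcal{F}_0 = \{\emptyset, \mathcal{X}\}$ yields $\mathbb{E}^{\mathbb{P}}[\phi_\tau] = \mathbb{E}^{\mathbb{P}}[\phi] = \mathbb{E}^{\mathbb{P}}[\phi_0]$ for every such $\tau$. Hence if $\phi$ is valid, then $\mathbb{E}^{\mathbb{P}}[\phi_\tau] = \mathbb{E}^{\mathbb{P}}[\phi] \leq \alpha$ for every $\tau$, which is exactly anytime validity of $(\phi_n)$ for the simple hypothesis $H = \{\mathbb{P}\}$ (Part 1); and if $\phi$ is not valid, then $\mathbb{E}^{\mathbb{P}}[\phi_\tau] = \mathbb{E}^{\mathbb{P}}[\phi] > \alpha$ for \emph{every} $\tau$, so no $\phi_\tau$ is valid (Part 2). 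For Part 3, if $\phi$ is $\mathcal{F}_\tau$-measurable then, being bounded, it is a version of its own conditional expectation, $\mathbb{E}^{\mathbb{P}}[\phi \mid \mathcal{F}_\tau] = \phi$ $\mathbb{P}$-a.s., and the displayed identity gives $\phi_\tau = \phi$ $\mathbb{P}$-almost surely.

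I expect the only delicate point to be the bookkeeping around the infinite stopping time: checking that the definition $\phi_\infty := \limsup_n \phi_n$ coincides $\mathbb{P}$-almost surely with the closing variable $\mathbb{E}^{\mathbb{P}}[\phi \mid \mathcal{F}_\infty]$ (so that the process has the "right" value at $\tau = \infty$), and that optional sampling is invoked in the form valid for uniformly integrable or closed martingales with possibly-infinite stopping times, rather than the more familiar bounded-$\tau$ version. Everything else — integrability, the martingale property, the tower property — is routine, and since $H = \{\mathbb{P}\}$ the almost-sure qualifiers cause no trouble, being consistent with the $\mathbb{P}$-a.s.\ uniqueness already noted for $(\phi_n)$.
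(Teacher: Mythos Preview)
Your proposal is correct and follows essentially the same approach as the paper: both recognize $(\phi_n)$ as the Doob martingale of $\phi$, use the boundedness in $[0,1]$ to obtain uniform integrability, invoke optional stopping to get $\mathbb{E}^{\mathbb{P}}[\phi_\tau] = \mathbb{E}^{\mathbb{P}}[\phi]$ (from which Parts~1 and~2 are immediate), and conclude Part~3 directly from $\phi_\tau = \mathbb{E}^{\mathbb{P}}[\phi \mid \mathcal{F}_\tau]$. Your treatment is more explicit than the paper's about the $\tau = \infty$ case and the identification of $\phi_\infty$ with $\mathbb{E}^{\mathbb{P}}[\phi \mid \mathcal{F}_\infty]$ via L\'evy's upward theorem, but this is added care rather than a different route.
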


            \added{
            Looking in the other direction, any sequential test $(\phi_n)$ induces an $\mathcal{F}_\infty$-measurable (and hence $\mathcal{F}$-measurable) \emph{terminal test} $\phi_\infty := \limsup_{n \to \infty} \phi_n$.
            In Proposition \ref{prp:validity_terminal_test}, we show that if $(\phi_n)$ is anytime valid, then $\phi_\infty$ is valid.

            \begin{prp}\label{prp:validity_terminal_test}
                If $(\phi_n)$ is anytime valid, then $\phi_\infty$ is valid.
            \end{prp}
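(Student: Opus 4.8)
The plan is to read the conclusion directly off the definition of anytime validity by specializing to the constant stopping time $\tau \equiv \infty$.

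First I would record that $\phi_\infty := \limsup_{n \to \infty}\phi_n$ is a genuine test, i.e.\ an $\mathcal{F}$-measurable map into $[0,1]$: each $\phi_n$ is $\mathcal{F}_n$-measurable and hence $\mathcal{F}_\infty$-measurable, so the limsup is $\mathcal{F}_\infty$-measurable, and $\mathcal{F}_\infty \subseteq \mathcal{F}$ makes it $\mathcal{F}$-measurable; it takes values in $[0,1]$ because every $\phi_n$ does. This is already flagged in the text preceding the statement, so it is just a matter of noting it. Next I would observe that $\tau \equiv \infty$ is a stopping time adapted to $(\mathcal{F}_n)$ --- trivially, since $\{\tau \leq n\} = \emptyset \in \mathcal{F}_n$ for every finite $n$ --- and that, by the convention $\phi_\infty := \limsup_{n\to\infty}\phi_n$ built into the definition of $\phi_\tau$, we have $\phi_\tau = \phi_\infty$ for this $\tau$. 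Applying the anytime validity hypothesis to this particular $\tau$ then gives $\sup_{\mathbb{P} \in H}\mathbb{E}^{\mathbb{P}}[\phi_\infty] = \sup_{\mathbb{P} \in H}\mathbb{E}^{\mathbb{P}}[\phi_\tau] \leq \alpha$, which is precisely validity of $\phi_\infty$, and the proof is complete.

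I do not anticipate a real obstacle, since all the content has been front-loaded into the definition of anytime validity, which explicitly admits infinite stopping times with the limsup convention. The one point I would be careful about is that the claim cannot be obtained from anytime validity at \emph{finite} stopping times alone by a naive limiting argument: for an arbitrary sequence of finite stopping times $\tau_k \uparrow \infty$ one need not have $\phi_{\tau_k} \to \phi_\infty$ (take $\tau_k \equiv k$), and a reverse-Fatou bound would in any case control $\mathbb{E}^{\mathbb{P}}[\phi_\infty]$ from below rather than from above. So the essential move is to invoke $\tau = \infty$ directly rather than to approximate it.
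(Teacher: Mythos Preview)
Your proposal is correct and matches the paper's proof essentially verbatim: the paper simply notes that $\tau = \infty$ is a stopping time and concludes $\mathbb{E}^{\mathbb{P}}[\phi_\infty] \leq \sup_{\tau}\mathbb{E}^{\mathbb{P}}[\phi_\tau] \leq \alpha$. Your additional remarks on measurability and on why a finite-$\tau$ limiting argument would fail are accurate and add clarity, though the paper omits them.
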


            In Proposition \ref{prp:admissible}, we show that the construction we describe in Theorem \ref{thm:simple} is a powerful tool, in the sense that every anytime valid admissible sequential test may be induced from a test.
            Here, admissibility means that there exists no other anytime valid sequential test $(\phi_n')$ such that $\mathbb{P}(\phi_n' \geq \phi_n) = 1$ for all $n$ and $\mathbb{P}(\phi_n' > \phi_n) > 0$ for some $n$ \citep{ramdas2022admissible}.
            A consequence of this result is that all properties of an (admissible) sequential test are summarized by a single test $\phi$.
            
            \begin{prp}\label{prp:admissible}
                If $(\phi_n)$ is admissible, then there exists an $\mathcal{F}$-measurable test $\phi$ such that $\phi_n = \mathbb{E}^{\mathbb{P}}[\phi \mid \mathcal{F}_n]$, $\mathbb{P}$-almost surely, for every $n$.
            \end{prp}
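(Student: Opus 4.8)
The plan is to show that admissibility forces $(\phi_n)$ to be a uniformly integrable martingale bounded in $[0,1]$. Such a martingale converges $\mathbb{P}$-almost surely and in $L^1$ to $\phi_\infty = \limsup_n \phi_n$, which is $\mathcal{F}_\infty$-measurable (hence $\mathcal{F}$-measurable) and $[0,1]$-valued, and it is closed by this limit, so that $\phi_n = \mathbb{E}^{\mathbb{P}}[\phi_\infty \mid \mathcal{F}_n]$ for every $n$; taking $\phi := \phi_\infty$ then proves the claim. All the work lies in two reductions, ``admissible $\Rightarrow$ supermartingale'' and ``admissible supermartingale $\Rightarrow$ martingale'', each carried out by constructing an anytime valid sequence that dominates $(\phi_n)$ and then invoking admissibility.

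For the first reduction, I would pass to the Snell envelope $\psi_n := \esssup\{\mathbb{E}^{\mathbb{P}}[\phi_\sigma \mid \mathcal{F}_n] : \sigma \geq n \text{ a stopping time}\}$, with $\phi_\infty := \limsup_n \phi_n$ matching the paper's convention. This $(\psi_n)$ is adapted, $[0,1]$-valued and satisfies $\psi_n \geq \phi_n$. Since the family $\{\mathbb{E}^{\mathbb{P}}[\phi_\sigma \mid \mathcal{F}_n] : \sigma \geq n\}$ is upward directed (given two stopping times, switch between them on an $\mathcal{F}_n$-measurable event), the essential supremum is attained as an increasing limit along a sequence, so conditional monotone convergence shows that $(\psi_n)$ is a supermartingale; moreover $\psi_0 = \esssup_\sigma \mathbb{E}^{\mathbb{P}}[\phi_\sigma] = \sup_\sigma \mathbb{E}^{\mathbb{P}}[\phi_\sigma] \leq \alpha$, using that $\mathcal{F}_0$ is trivial and that $(\phi_n)$ is anytime valid. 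As $(\psi_n)$ is a nonnegative bounded supermartingale with $\psi_0 \leq \alpha$, optional stopping gives $\mathbb{E}^{\mathbb{P}}[\psi_\tau] \leq \psi_0 \leq \alpha$ for every (possibly infinite) stopping time $\tau$, so $(\psi_n)$ is anytime valid and dominates $(\phi_n)$. Admissibility then forces $\psi_n = \phi_n$ $\mathbb{P}$-a.s. for all $n$, so in particular $(\phi_n)$ is a supermartingale.

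For the second reduction, I would use the Doob decomposition $\phi_n = N_n - A_n$ with $N$ a martingale, $N_0 = \phi_0$, and $A$ predictable, increasing, $A_0 = 0$. Then $N_n = \phi_n + A_n \geq 0$, and $\mathbb{E}^{\mathbb{P}}[A_\infty] = \phi_0 - \lim_n \mathbb{E}^{\mathbb{P}}[\phi_n] = \phi_0 - \mathbb{E}^{\mathbb{P}}[\phi_\infty] < \infty$ (the bounded supermartingale $(\phi_n)$ converges, and bounded convergence applies), so $N$ is dominated by the integrable variable $1 + A_\infty$, hence is a uniformly integrable martingale with $N_\infty = \phi_\infty + A_\infty$. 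Suppose $A \not\equiv 0$ and let $k$ be the first time with $\mathbb{P}(A_k - A_{k-1} > 0) > 0$. Set $\psi_n' := N_n \wedge 1 \in [0,1]$; then $\psi_n' \geq (N_n - A_n) \wedge 1 = \phi_n$, and $(\psi_n')$ is anytime valid since $\mathbb{E}^{\mathbb{P}}[\psi_\tau'] \leq \mathbb{E}^{\mathbb{P}}[N_\tau] = N_0 \leq \alpha$ by optional stopping for the uniformly integrable martingale $N$. On the $\mathcal{F}_{k-1}$-event $D := \{A_k - A_{k-1} > 0\}$ one has $\mathbb{E}^{\mathbb{P}}[\phi_k \mid \mathcal{F}_{k-1}] < \phi_{k-1} \leq 1$, so $\mathbb{P}(\phi_k < 1 \mid \mathcal{F}_{k-1}) > 0$ on $D$; on the positive-probability event $D \cap \{\phi_k < 1\}$ we then have $A_k > 0$ and $\phi_k < 1$, whence $\psi_k' = (\phi_k + A_k) \wedge 1 > \phi_k$, contradicting admissibility. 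So $A \equiv 0$: $(\phi_n)$ is a bounded martingale, and the closing argument above applies.

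I expect the first reduction to be the main obstacle: one must check carefully that the Snell envelope of $(\phi_n)$ is itself an anytime valid sequential test. The three ingredients --- that the envelope is a supermartingale, that its deterministic initial value equals $\sup_\tau \mathbb{E}^{\mathbb{P}}[\phi_\tau] \leq \alpha$ because $\mathcal{F}_0$ is trivial, and that optional stopping propagates this bound to every (possibly infinite) stopping time --- are standard but demand the essential-supremum and directedness bookkeeping to be handled correctly. The second reduction and the final closing step are comparatively routine once the supermartingale property is in hand. Alternatively, one could invoke the characterization of admissible anytime valid procedures via nonnegative martingales in \citet{ramdas2022admissible} and jump directly to the closing step.
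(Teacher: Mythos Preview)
Your proof is correct. The paper takes exactly the shortcut you mention in your final sentence: it invokes Theorem~18 of \citet{ramdas2022admissible} to conclude directly that an admissible anytime valid sequential test is a martingale, and then applies uniform integrability (from boundedness in $[0,1]$) and a martingale closure theorem (Kallenberg) to obtain $\phi_n = \mathbb{E}^{\mathbb{P}}[\phi_\infty \mid \mathcal{F}_n]$. Your main argument instead supplies a self-contained proof of the ``admissible $\Rightarrow$ martingale'' step, first via the Snell envelope to get the supermartingale property and then via the Doob decomposition to kill the compensator; this is precisely the content of the cited result specialized to the bounded case, and it is nice to see it spelled out. The closing step is the same in both approaches. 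What your route buys is independence from the external reference; what the paper's route buys is brevity.
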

            }

            \added{
            \begin{rmk}[Observing how a test comes to its conclusion]
                By tracking the induced sequential test $(\phi_n)$ of the test $\phi$, we are effectively observing how $\phi$ uses the information to come to its conclusion.
                Indeed, the starting point $\phi_0$ of the sequential test coincides with the size of the test $\phi$: $\phi_0 = \mathbb{E}^{\mathbb{P}}[\phi \mid \mathcal{F}_0] = \mathbb{E}^{\mathbb{P}}[\phi]$.
                Moreover, the value $\phi_n = \mathbb{E}^{\mathbb{P}}[\phi \mid \mathcal{F}_n]$ can be interpreted as reflecting the current expectation about the eventual outcome of the test $\phi$.
            \end{rmk}
            }
            
            \begin{rmk}[Doob martingale]
                In the literature on anytime valid inference, the standard strategy to build anytime valid sequential tests is to propose a sequence $(\phi_n)$ of tests and then show that this constitutes a martingale.
                Our approach here turns this around: we start with some target test $\phi$, from which we then induce a sequential test $(\phi_n)$ by conditioning on the filtration.
                For the simple hypothesis setting, this approach is also known as the Doob martingale of $\phi$.
            \end{rmk}

            \added{
            \begin{rmk}[Inducing test at stopping time]\label{rmk:inducing_at_stopping_time}
                In the introduction, we focused on inducing a sequential test from a test that is measurable at some data-independent time $N$.
                In Theorem \ref{thm:simple}, we show that we may also induce a sequential test from a test at some stopping time, so that the sequential test coincides with the test at the stopping time.
                As the stopping time is part of the sequential test that we induce, this stopping time must be chosen at the moment the sequential test is induced.
            \end{rmk}
            }
            \added{
            \begin{rmk}
                As closed forms for $\phi_n = \mathbb{E}[\phi \mid\mathcal{F}_n]$ are not always available, one may have to resort to numerical approximations.
                One potential approach is to represent $\phi_n$ as the outcome of an optimization problem, by using the representation of the conditional expectation of $\phi$ as its $L^2$-projection onto the class of $\mathcal{F}_n$-measurable functions: $\phi_n = \argmin_{\phi' : \mathcal{F}_n\textnormal{-measurable}} \mathbb{E}^{\mathbb{P}}\left[\left(\phi - \phi'\right)^2\right]$.
                Another is to sequentially update Monte Carlo approximations of $\mathbb{E}[\phi \mid \mathcal{F}_n]$, for example via particle filtering~\citep{DoucetDeFreitasGordon2001}.
            \end{rmk}
            }
            
        \subsection{Composite hypotheses}
            In this section, we generalize Theorem \ref{thm:simple} to composite null hypotheses $H$.
            In the composite setting, we rely on Proposition \ref{prp:e_to_essinf_e} which shows that a test $\phi^H$ for $H$ can be represented as the infimum $\essinf_{\mathbb{P} \in H}$ of a collection $(\phi^{\mathbb{P}})_{\mathbb{P} \in H}$ of tests for $\mathbb{P} \in H$.
            \added{Here, $\essinf_{\mathbb{P} \in H}$ denotes the greatest \emph{measurable lower bound} to ensure the measurability of the infimum (see Remark \ref{rmk:essinf} for more details).}
            While we believe this proposition is known, we are only aware of a sequential analogue that appears in \citet{ramdas2022admissible}. 

            \begin{prp}\label{prp:e_to_essinf_e}
                A test $\phi^H$ is valid for $H$ if and only if $\phi^H = \essinf_{\mathbb{P} \in H} \phi^{\mathbb{P}}$, for some collection $(\phi^{\mathbb{P}})_{\mathbb{P} \in H}$ with elements $\phi^{\mathbb{P}}$ that are individually valid for $\mathbb{P}$.
            \end{prp}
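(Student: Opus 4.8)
The plan is to prove the two implications separately; both are short. For the \emph{if} direction I combine monotonicity of expectation with the defining inequality of the essential infimum, and for the \emph{only if} direction I use the trivial constant collection $\phi^{\mathbb{P}} := \phi^H$. The only genuinely delicate point is the measure-theoretic bookkeeping around $\essinf$, which I isolate at the end.

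For the \emph{if} direction, suppose $\phi^H = \essinf_{\mathbb{P}\in H}\phi^{\mathbb{P}}$ for a collection $(\phi^{\mathbb{P}})_{\mathbb{P}\in H}$ whose members are individually valid, i.e.\ $\mathbb{E}^{\mathbb{P}}[\phi^{\mathbb{P}}]\leq\alpha$ for each $\mathbb{P}\in H$. First, $\phi^H$ is a bona fide test: it is $\mathcal{F}$-measurable by the construction of the essential infimum (Remark~\ref{rmk:essinf}), and it is $[0,1]$-valued since the constant $0$ is a measurable lower bound of the family, forcing $\phi^H\geq 0$, while $\phi^H\leq\phi^{\mathbb{P}_0}\leq 1$ for any fixed $\mathbb{P}_0\in H$. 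Now fix $\mathbb{Q}\in H$. By the defining property of $\essinf$ we have $\phi^H\leq\phi^{\mathbb{Q}}$, $\mathbb{Q}$-almost surely, so monotonicity of expectation gives $\mathbb{E}^{\mathbb{Q}}[\phi^H]\leq\mathbb{E}^{\mathbb{Q}}[\phi^{\mathbb{Q}}]\leq\alpha$. Taking the supremum over $\mathbb{Q}\in H$ yields $\sup_{\mathbb{P}\in H}\mathbb{E}^{\mathbb{P}}[\phi^H]\leq\alpha$, i.e.\ $\phi^H$ is valid for $H$.

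For the \emph{only if} direction, suppose $\phi^H$ is valid for $H$, so that $\mathbb{E}^{\mathbb{P}}[\phi^H]\leq\alpha$ for every $\mathbb{P}\in H$. Take the constant collection $\phi^{\mathbb{P}}:=\phi^H$ for all $\mathbb{P}\in H$. Each $\phi^{\mathbb{P}}$ is then individually valid for $\mathbb{P}$ by the preceding inequality, and the essential infimum of a family all of whose members equal the fixed measurable function $\phi^H$ is $\phi^H$ itself. Hence $\phi^H=\essinf_{\mathbb{P}\in H}\phi^{\mathbb{P}}$, which completes this direction.

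I expect the main obstacle to be not the decomposition itself — the constant collection already settles the forward implication — but the measure-theoretic treatment of $\essinf$ over a family indexed by a possibly uncountable and possibly non-dominated $H$: one must know that this greatest measurable lower bound exists as a genuine $\mathcal{F}$-measurable, $[0,1]$-valued function, and that it is dominated by each $\phi^{\mathbb{P}}$, $\mathbb{P}$-almost surely. This is classical (for instance via the representation of $\essinf$ as an infimum over a suitable countable subfamily), and the precise statement is recorded in Remark~\ref{rmk:essinf}; granting it, the two computations above are complete.
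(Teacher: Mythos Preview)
Your proof is correct and follows essentially the same approach as the paper: for the \emph{only if} direction you take the constant collection $\phi^{\mathbb{P}}:=\phi^H$, and for the \emph{if} direction you use that $\essinf_{\mathbb{P}\in H}\phi^{\mathbb{P}}\leq\phi^{\mathbb{Q}}$ to bound $\mathbb{E}^{\mathbb{Q}}[\phi^H]\leq\mathbb{E}^{\mathbb{Q}}[\phi^{\mathbb{Q}}]\leq\alpha$. Your additional remarks on the $[0,1]$-valuedness of $\phi^H$ and the measure-theoretic existence of the essential infimum are careful extras that the paper leaves implicit.
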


            To obtain the composite generalization of Theorem \ref{thm:simple}, we now induce a sequential test $(\phi_n^{\mathbb{P}})$ from each test $\phi^{\mathbb{P}}$, and construct the sequential test $(\phi_n^H)$ for $H$ as the running infimum:
            \begin{align*}
                \phi_n^H := \essinf_{\mathbb{P} \in H} \phi_n^{\mathbb{P}}
                    \equiv \essinf_{\mathbb{P} \in H} \mathbb{E}^{\mathbb{P}}[\phi^{\mathbb{P}} \mid \mathcal{F}_n].
            \end{align*}
            Moreover, to obtain a counterpart to the second claim of Theorem \ref{thm:simple} we must define the limiting test as $\phi_\infty^H := \essinf_{\mathbb{P} \in H} \phi_\infty^{\mathbb{P}}$ here.\footnote{We generally only have $\phi_\infty^H \geq \limsup_{n \to \infty} \phi_n^H$ because the limsup and essinf need not commute.}

            \begin{thm}[Composite hypotheses]\label{thm:composite}
                For every $\mathbb{P} \in H$, let $\phi^{\mathbb{P}}$ be an $\mathcal{F}$-measurable test.
                \begin{enumerate}
                    \item If every test $\phi^{\mathbb{P}}$ is valid, then $(\phi_n^H)$ is anytime valid.
                    \item \added{Suppose every $\phi^{\mathbb{P}}$ is $\mathcal{F}_\infty$-measurable. If the test $\phi^H \equiv \essinf_{\mathbb{P} \in H} \phi^{\mathbb{P}}$ is not valid, then $(\phi_n^H)$ is not anytime valid.}
                    \item If every test $\phi^{\mathbb{P}}$ is \added{$\mathcal{F}_\tau$-measurable}, then 
                    \added{$\phi_\tau^H = \essinf_{\mathbb{P} \in H} \phi^{\mathbb{P}}$}.
                \end{enumerate}
            \end{thm}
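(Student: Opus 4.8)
The plan is to reduce each of the three claims to the corresponding part of Theorem~\ref{thm:simple}, applied to the individual tests $\phi^{\mathbb{P}}$, together with two elementary properties of the essential infimum that I will use throughout. First, $\essinf_{\mathbb{P}\in H}\psi^{\mathbb{P}}\le\psi^{\mathbb{Q}}$ holds $\mathbb{Q}$-almost surely for every $\mathbb{Q}\in H$. Second, if two families $(\psi^{\mathbb{P}})_{\mathbb{P}\in H}$ and $(\tilde\psi^{\mathbb{P}})_{\mathbb{P}\in H}$ satisfy $\psi^{\mathbb{P}}=\tilde\psi^{\mathbb{P}}$ $\mathbb{Q}$-almost surely for every $\mathbb{Q}\in H$ (each identity up to its own null set), then $\essinf_{\mathbb{P}}\psi^{\mathbb{P}}=\essinf_{\mathbb{P}}\tilde\psi^{\mathbb{P}}$ $\mathbb{Q}$-almost surely for every $\mathbb{Q}\in H$; this holds because an essential infimum agrees a.s.\ with the infimum of a suitable countable subfamily, over which null sets may be unioned. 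Both properties are covered by the measurability conventions of Remark~\ref{rmk:essinf} and Proposition~\ref{prp:e_to_essinf_e}.

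For the first claim, fix a stopping time $\tau$ and a measure $\mathbb{Q}\in H$. Since $\phi^{\mathbb{Q}}$ is valid for $\mathbb{Q}$, the first part of Theorem~\ref{thm:simple} yields $\mathbb{E}^{\mathbb{Q}}[\phi_\tau^{\mathbb{Q}}]\le\alpha$, where the convention $\phi_\infty^{\mathbb{Q}}=\limsup_n\phi_n^{\mathbb{Q}}$ covers $\tau\equiv\infty$. By the first essinf property, $\phi_\tau^H=\essinf_{\mathbb{P}}\phi_\tau^{\mathbb{P}}\le\phi_\tau^{\mathbb{Q}}$ $\mathbb{Q}$-a.s., hence $\mathbb{E}^{\mathbb{Q}}[\phi_\tau^H]\le\mathbb{E}^{\mathbb{Q}}[\phi_\tau^{\mathbb{Q}}]\le\alpha$; taking the supremum over $\mathbb{Q}\in H$ gives anytime validity. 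For the third claim, the hypothesis that each $\phi^{\mathbb{P}}$ is $\mathcal{F}_\tau$-measurable lets me invoke the third part of Theorem~\ref{thm:simple} to get $\phi_\tau^{\mathbb{P}}=\phi^{\mathbb{P}}$ $\mathbb{P}$-a.s.\ for every $\mathbb{P}\in H$, and then the second essinf property gives $\phi_\tau^H=\essinf_{\mathbb{P}}\phi_\tau^{\mathbb{P}}=\essinf_{\mathbb{P}}\phi^{\mathbb{P}}$, as claimed.

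For the second claim, I would use the single stopping time $\tau\equiv\infty$. Because $\phi^{\mathbb{P}}$ is $\mathcal{F}_\infty$-measurable, the bounded Doob martingale $\phi_n^{\mathbb{P}}=\mathbb{E}^{\mathbb{P}}[\phi^{\mathbb{P}}\mid\mathcal{F}_n]$ converges $\mathbb{P}$-a.s.\ to $\mathbb{E}^{\mathbb{P}}[\phi^{\mathbb{P}}\mid\mathcal{F}_\infty]=\phi^{\mathbb{P}}$ by L\'evy's upward convergence theorem, so $\phi_\infty^{\mathbb{P}}=\limsup_n\phi_n^{\mathbb{P}}=\phi^{\mathbb{P}}$ $\mathbb{P}$-a.s. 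The second essinf property then gives $\phi_\infty^H=\essinf_{\mathbb{P}}\phi_\infty^{\mathbb{P}}=\essinf_{\mathbb{P}}\phi^{\mathbb{P}}=\phi^H$. If $\phi^H$ is not valid there is some $\mathbb{P}^{*}\in H$ with $\mathbb{E}^{\mathbb{P}^{*}}[\phi^H]>\alpha$, whence $\sup_{\mathbb{Q}\in H}\mathbb{E}^{\mathbb{Q}}[\phi_\infty^H]\ge\mathbb{E}^{\mathbb{P}^{*}}[\phi_\infty^H]=\mathbb{E}^{\mathbb{P}^{*}}[\phi^H]>\alpha$, so the infinite stopping time already witnesses that $(\phi_n^H)$ is not anytime valid.

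The only delicate point I anticipate is the bookkeeping for the essential infimum: making the two properties above precise when the families $(\phi_n^{\mathbb{P}})_{\mathbb{P}}$ and $(\phi_\infty^{\mathbb{P}})_{\mathbb{P}}$ are each defined only up to $\mathbb{P}$-null sets and the measures in $H$ may be mutually singular, and confirming that $\phi_\tau^H$ and $\phi_\infty^H$ really are the running and limiting essinfs --- this is exactly what Remark~\ref{rmk:essinf} and Proposition~\ref{prp:e_to_essinf_e} are there to absorb. Everything else is a direct appeal to Theorem~\ref{thm:simple}, L\'evy's theorem, and monotonicity of the expectation. It is worth noting why the second claim is weaker than its simple-hypothesis counterpart: unlike in the simple case, $(\phi_n^H)$ need not be a (sub- or super-)martingale under $\mathbb{P}^{*}$, so one cannot upgrade the conclusion to ``$\phi_\tau^H$ is invalid for every $\tau$'', and it is precisely the $\mathcal{F}_\infty$-measurability hypothesis that makes the single choice $\tau\equiv\infty$ suffice.
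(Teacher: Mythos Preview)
Your proof is correct and takes essentially the same route as the paper: bound $\phi_\tau^H\le\phi_\tau^{\mathbb Q}$ via the essential infimum and then invoke Theorem~\ref{thm:simple} (or its underlying optional-stopping / iterated-expectation argument) pointwise in $\mathbb Q\in H$. The only cosmetic difference is in claim~2, where the paper applies the already-proved claim~3 at $\tau=\infty$ to obtain $\phi_\infty^H=\essinf_{\mathbb P}\phi^{\mathbb P}$ in one line, whereas you re-derive $\phi_\infty^{\mathbb P}=\phi^{\mathbb P}$ via L\'evy's upward theorem before taking the essinf; the two are equivalent, and your explicit flagging of the null-set bookkeeping for the essinf is, if anything, more careful than the paper's own treatment.
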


        \added{
        \begin{rmk}[Extending to e-values]\label{rmk:extending_to_e-values}
            We choose to present these results in terms of tests, as these are the most well-known kind of evidence variable \citep{koning2024posthocalphahypothesistesting}.
            However, we stress that most of the results in this section easily generalize to e-values through the bridge between e-values recently established in \citet{koning2024continuoustesting}, which we describe in Section \ref{sec:e-values}.
            Two exceptions are the second claim in Theorem \ref{thm:simple} and Proposition \ref{prp:admissible}, which rely on the uniform integrability that is automatically guaranteed by the uniform boundedness of sequential tests.
        \end{rmk}
        }
        
        \begin{rmk}[Selecting the collection $(\phi^{\mathbb{P}})_{\mathbb{P} \in H}$]\label{rmk:pointwise_to_composite}
            Unfortunately, it is not always clear how to appropriately select the collection of tests $(\phi^{\mathbb{P}})_{\mathbb{P} \in H}$, or whether there even exists a unique `natural' choice.
            For example, we would like to apply our machinery to derive optimal sequential tests by applying it to the most powerful test $\phi^*$ for $H$ against some alternative $\mathbb{Q}$, or a log-utility-optimal \citep{larsson2024numeraire} or more generally utility-optimal test \citep{koning2024continuoustesting}.
            Copying the approach for the $z$-test, we could naively select each $\phi^{\mathbb{P}}$ as the optimal test between $\mathbb{P}$ and $\mathbb{Q}$, which may be viewed as a generalization of the `universal inference' approach of \citet{wasserman2020universal} beyond log-utility-optimal e-values.
            However, this does not guarantee that $\essinf_{\mathbb{P} \in H} \phi^\mathbb{P} = \phi^*$.
            That is, the optimal test for $H$ against $\mathbb{Q}$ is not necessarily the essential infimum of the optimal tests for $\mathbb{P} \in H$ against $\mathbb{Q}$.

            A naive approach that does ensure $\essinf_{\mathbb{P} \in H} \phi^\mathbb{P} = \phi^*$ is to select $\phi^\mathbb{P} = \phi^*$ for every $\mathbb{P}$.
            Unfortunately, plugging this into Theorem \ref{thm:composite} often results in an uninteresting sequential test of the form $0, 0, \dots, 0, \phi^*$ or $\alpha, \alpha, \dots, \alpha, \phi^*$.
        \end{rmk}

        \added{
        \begin{rmk}[Existence of (essential) infimum]\label{rmk:essinf}
            A technical problem arises if the hypothesis $H$ is uncountable.
            In that case, we would like to take the infimum over an uncountable collection of $\mathcal{F}_n$-measurable functions, which is itself possibly not $\mathcal{F}_n$-measurable stemming from the fact that a $\sigma$-algebra is only guaranteed to be closed under countable unions.
            Assuming the $\sigma$-algebra is also closed under uncountable unions is one approach, but this may ban important applications.
            The route we take to avoid this problem is to replace the infimum by the essential infimum $\essinf_\mathbb{P} Y^{\mathbb{P}}$: the greatest \emph{measurable} almost-sure lower bound of the collection $(Y^{\mathbb{P}})$.
            Such an essential infimum exists if we assume $H$ is locally absolutely continuous with respect to some reference measure (see e.g. Appendix A.1 in \citet{ramdas2022admissible}).
            Another approach is to avoid taking an infimum by defining $(\phi_n^H)$ as some process such that $\phi_n^H \leq \mathbb{E}^{\mathbb{P}}[\phi^{\mathbb{P}} \mid \mathcal{F}_n]$ for every $\mathbb{P} \in H$.
            Unfortunately, this makes it difficult to derive a satisfying version of the third claim in Theorem \ref{thm:composite}.
            We acknowledge that this is somewhat unsatisfying, but we are not sure whether a satisfying solution exists within standard analysis.
        \end{rmk}
        }

    \subsection{Sufficient Monotone Likelihood Ratio}\label{sec:mlr}
        As highlighted in Remark \ref{rmk:pointwise_to_composite}, it is not always clear how to pick the collection of tests $(\phi^{\mathbb{P}})_{\mathbb{P} \in H}$.
        For example, for the one-sided $t$-test for the composite hypothesis that the effect size $\delta$ is smaller than 0, we may be tempted to choose each $\phi^\delta$ as the one-sided $t$-test for the simple hypothesis that the effect size equals $\delta$.
        While applying the infimum-construction in Theorem \ref{thm:composite} to this collection of tests will lead to a sequential test that is valid for the composite hypothesis and ends at the $t$-test, it is not admissible.

        In this section, we discuss an alternative approach to tackle composite hypotheses recently laid out by \citet{grunwald2025supermartingales}, in the presence of a Monotone Likelihood Ratio with respect to a sufficient statistic.
        They use this to derive a log-utility-optimal sequential version of the $t$-test \added{(see Section \ref{sec:e-values} for a primer on utility-optimal testing)}.
        We present a generalization of their result in Theorem \ref{thm:mlr} that allows us to derive a sequential version of the traditional $t$-test and more general utility-optimal variants of the $t$-test in Section \ref{sec:t-test}.

        In the result, we consider a model $(\mathbb{P}_\delta)_{\delta \in \Delta}$, where $\Delta$ is a totally ordered set, which we assume admits some dominating measure.

        \begin{thm}\label{thm:mlr}
            Assume for every $n$, when the probabilities are restricted to $\mathcal{F}_n$,
            \begin{enumerate}
                \item the Monotone Likelihood Ratio property holds in $\Delta$, with respect to a real-valued sufficient statistic $T_n$,
                \item the test $\phi_n$ is non-decreasing in $T_n$.
            \end{enumerate}
            If $(\phi_n)$ is a non-negative martingale for $\mathbb{P}_{\delta^0}$ starting at $\alpha$, then it is anytime valid for $(\mathbb{P}_{\delta^-})_{\delta^- \leq \delta^0}$.
        \end{thm}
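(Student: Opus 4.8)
The plan is to prove the seemingly stronger statement that for every $\delta^- \le \delta^0$ the process $(\phi_n)$ is a non-negative supermartingale under $\mathbb{P}_{\delta^-}$ with $\phi_0 = \alpha$; anytime validity for the whole family $(\mathbb{P}_{\delta^-})_{\delta^- \le \delta^0}$ then follows routinely. Indeed, $\mathcal{F}_0 = \{\emptyset, \mathcal{X}\}$ forces $\phi_0$ to be the constant $\alpha$; for any (possibly infinite) stopping time $\tau$ the stopped process $(\phi_{\tau \wedge n})_n$ is again a non-negative $\mathbb{P}_{\delta^-}$-supermartingale, so $\mathbb{E}^{\mathbb{P}_{\delta^-}}[\phi_{\tau \wedge n}] \le \phi_0 = \alpha$ for all $n$; since a non-negative supermartingale converges $\mathbb{P}_{\delta^-}$-almost surely, $\phi_{\tau \wedge n} \to \phi_\tau$ $\mathbb{P}_{\delta^-}$-a.s.\ (with $\phi_\infty = \lim_n \phi_n = \limsup_n \phi_n$ on $\{\tau = \infty\}$, consistent with the paper's convention), and Fatou's lemma gives $\mathbb{E}^{\mathbb{P}_{\delta^-}}[\phi_\tau] \le \liminf_n \mathbb{E}^{\mathbb{P}_{\delta^-}}[\phi_{\tau \wedge n}] \le \alpha$.

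It therefore remains to establish the one-step inequality $\mathbb{E}^{\mathbb{P}_{\delta^-}}[\phi_{n+1} \mid \mathcal{F}_n] \le \phi_n$, $\mathbb{P}_{\delta^-}$-a.s., for each $n$. I would work with densities $p_k^\delta := d\mathbb{P}_\delta|_{\mathcal{F}_k}/d\mu$ against the postulated dominating measure $\mu$. By the Fisher--Neyman factorization, sufficiency of $T_{n+1}$ (for the model restricted to $\mathcal{F}_{n+1}$) lets us write $\phi_{n+1} = \psi(T_{n+1})$ with $\psi$ non-decreasing (by the second assumption) and $p_{n+1}^{\delta^-}/p_{n+1}^{\delta^0} = \rho_{n+1}(T_{n+1})$ with $\rho_{n+1}$ \emph{non-increasing}, since $\delta^- \le \delta^0$ and the first assumption makes $p_{n+1}^{\delta^0}/p_{n+1}^{\delta^-}$ non-decreasing in $T_{n+1}$; likewise $p_n^{\delta^-}/p_n^{\delta^0} = \rho_n(T_n)$ is $\mathcal{F}_n$-measurable. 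A change-of-measure computation against $\mu$ then gives, on the $\mathbb{P}_{\delta^-}$-full event where the relevant densities are positive,
\[
    \mathbb{E}^{\mathbb{P}_{\delta^-}}[\phi_{n+1} \mid \mathcal{F}_n]
    = \frac{1}{\rho_n(T_n)} \, \mathbb{E}^{\mathbb{P}_{\delta^0}}\!\big[\psi(T_{n+1})\,\rho_{n+1}(T_{n+1}) \mid \mathcal{F}_n\big].
\]

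Next I would invoke a conditional version of Chebyshev's sum (Harris) inequality: for a real random variable $T$, a sub-$\sigma$-algebra $\mathcal{G}$, a non-decreasing $f$ and a non-increasing $g$, one has $\mathbb{E}[f(T) g(T) \mid \mathcal{G}] \le \mathbb{E}[f(T) \mid \mathcal{G}]\,\mathbb{E}[g(T) \mid \mathcal{G}]$, proved by introducing a conditionally-i.i.d.\ copy $T'$ of $T$ given $\mathcal{G}$ and taking the conditional expectation of $(f(T) - f(T'))(g(T) - g(T')) \le 0$. Applying this with $\mathcal{G} = \mathcal{F}_n$ and $T = T_{n+1}$ bounds the right-hand side above by
\[
    \frac{1}{\rho_n(T_n)} \, \mathbb{E}^{\mathbb{P}_{\delta^0}}[\phi_{n+1} \mid \mathcal{F}_n] \cdot \mathbb{E}^{\mathbb{P}_{\delta^0}}[\rho_{n+1}(T_{n+1}) \mid \mathcal{F}_n]
    = \frac{1}{\rho_n(T_n)} \cdot \phi_n \cdot \rho_n(T_n) = \phi_n,
\]
where the first conditional expectation is $\phi_n$ by the $\mathbb{P}_{\delta^0}$-martingale hypothesis and the second is $\rho_n(T_n)$ because $\mathbb{E}^{\mathbb{P}_{\delta^0}}[p_{n+1}^{\delta^-}/p_{n+1}^{\delta^0} \mid \mathcal{F}_n] = p_n^{\delta^-}/p_n^{\delta^0}$ (both sides integrate to $\mathbb{P}_{\delta^-}(A)$ over any $A \in \mathcal{F}_n$). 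This gives the one-step inequality and hence the theorem.

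The main obstacle is the middle step: establishing that $\phi_{n+1}$ and the one-step likelihood ratio are oppositely-monotone functions of the \emph{single} real statistic $T_{n+1}$ --- this is exactly where both sufficiency (to collapse everything onto $T_{n+1}$) and MLR together with the ordering $\delta^- \le \delta^0$ (to fix the monotonicity direction) are indispensable --- and then promoting the classical single-variable correlation inequality to its conditional form given $\mathcal{F}_n$. A secondary technicality is the bookkeeping of the change-of-measure/Bayes identities on the null sets $\{p_n^{\delta^0} = 0\}$: this is harmless when $\mathbb{P}_{\delta^-}|_{\mathcal{F}_n} \ll \mathbb{P}_{\delta^0}|_{\mathcal{F}_n}$ (as in the intended applications) and otherwise requires a short extended-real-valued argument for $\rho_n, \rho_{n+1}$.
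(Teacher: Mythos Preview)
Your strategy---show $(\phi_n)$ is a non-negative $\mathbb{P}_{\delta^-}$-supermartingale for each $\delta^-\le\delta^0$ via a change of measure and a conditional Chebyshev/Harris correlation inequality, then deduce anytime validity by optional stopping and Fatou---is correct and is precisely the route the paper defers to: the authors omit the proof and point to Proposition~2 of \citet{grunwald2025supermartingales}, noting that only the ``non-decreasing in $T_n$'' property (rather than the stronger likelihood-ratio form) is used there. Your argument reconstructs exactly that step.

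One point to tighten: sufficiency of $T_{n+1}$ and the Fisher--Neyman factorization do \emph{not} imply that an arbitrary $\mathcal{F}_{n+1}$-measurable test satisfies $\phi_{n+1}=\psi(T_{n+1})$; factorization only gives you that the density ratio $p_{n+1}^{\delta^-}/p_{n+1}^{\delta^0}$ is a function of $T_{n+1}$. The representation $\phi_{n+1}=\psi(T_{n+1})$ with $\psi$ non-decreasing must come directly from hypothesis~(2), read as asserting that $\phi_{n+1}$ is a non-decreasing function of $T_{n+1}$ (this is the intended reading, consistent with Proposition~\ref{prp:condition_2} where $\mathcal{F}_n=\sigma(T_n)$, and with the likelihood-ratio case in the referenced paper). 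If one instead allows $\phi_{n+1}$ to depend on additional $\mathcal{F}_n$-measurable quantities, the conditional Harris step still goes through because, conditionally on $\mathcal{F}_n$, both factors are oppositely monotone in the single real variable $T_{n+1}$; but you should state this rather than invoke sufficiency. With that correction, your identification of the two conditional expectations ($\mathbb{E}^{\mathbb{P}_{\delta^0}}[\phi_{n+1}\mid\mathcal{F}_n]=\phi_n$ and $\mathbb{E}^{\mathbb{P}_{\delta^0}}[\rho_{n+1}(T_{n+1})\mid\mathcal{F}_n]=\rho_n(T_n)$) and the remainder of the argument are fine.
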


        Compared to \citet{grunwald2025supermartingales}, the generalization is that we merely assume the tests $\phi_n$ are non-decreasing in the sufficient statistic $T_n$, whereas they assume they are likelihood ratios, and so non-decreasing in $T_n$ by the Monotone Likelihood Ratio property.
        We omit the proof, as it is easily obtained from the proof of \citet{grunwald2025supermartingales} by noticing that the non-decreasing-in-$T_n$ property suffices in the proof of their Proposition 2.

        While the second condition of Theorem \ref{thm:mlr} is quite straightforward to handle when building up a martingale forwards in time, it is not always easy to verify when inducing a sequential test with our Doob martingale construction, as in Theorem \ref{thm:simple}.
        For this reason, we provide sufficient conditions in Proposition \ref{prp:condition_2}, which are easier to verify.

        \begin{prp}\label{prp:condition_2}
            Let $T$ and $T_n$ be real-valued statistics.
            Assume $\mathcal{F} := \sigma(T)$ and $\mathcal{F}_n := \sigma(T_n)$, $\mathcal{F}_n \subseteq \mathcal{F}$, for every $n \geq 0$.
            Let $\phi$ denote an $\mathcal{F}$-measurable test.
            Assume that under $\mathbb{P}_{\delta^0}$,
            \begin{enumerate}
                \item[1.] $\phi$ is non-decreasing in $T$,
                \item[2.] $T$ is stochastically non-decreasing in $T_n$,
            \end{enumerate}
            Then, $\phi_n = \mathbb{E}^{\mathbb{P}_{\delta^0}}[\phi \mid \mathcal{F}_n]$ is non-decreasing in $T_n$.
        \end{prp}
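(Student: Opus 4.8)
The plan is to reduce everything to statements about non-decreasing functions of real variables and then invoke the integral characterization of the usual stochastic order. First I would use the Doob--Dynkin lemma: since $\phi$ is $\mathcal{F} = \sigma(T)$-measurable and $[0,1]$-valued, there is a measurable $g : \mathbb{R} \to [0, 1]$ with $\phi = g(T)$, and Assumption 1 says (a version of) $g$ is non-decreasing. Likewise, since $\mathcal{F}_n = \sigma(T_n)$, the conditional expectation $\phi_n = \mathbb{E}^{\mathbb{P}_{\delta^0}}[\phi \mid \mathcal{F}_n]$ is $\sigma(T_n)$-measurable, hence $\phi_n = h_n(T_n)$ for some measurable $h_n : \mathbb{R} \to [0, 1]$. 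The claim ``$\phi_n$ is non-decreasing in $T_n$'' then amounts to showing that $h_n$ may be taken non-decreasing on the support of $\mathbb{P}_{\delta^0}^{T_n}$.

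Next I would introduce a regular conditional distribution. Because $T$ is real-valued (hence takes values in a Polish space), there is a Markov kernel $t \mapsto \mu_t := \mathbb{P}_{\delta^0}(T \in \cdot \mid T_n = t)$, unique up to a $\mathbb{P}_{\delta^0}^{T_n}$-null set, and for $\mathbb{P}_{\delta^0}^{T_n}$-almost every $t$ one has $h_n(t) = \int g \, d\mu_t$. The classical fact I would cite is the integral characterization of the stochastic order: for probability measures $\mu, \nu$ on $\mathbb{R}$, $\mu \preceq_{\mathrm{st}} \nu$ if and only if $\int g\, d\mu \leq \int g\, d\nu$ for every bounded non-decreasing $g : \mathbb{R} \to \mathbb{R}$. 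I would read Assumption 2, ``$T$ is stochastically non-decreasing in $T_n$'', precisely as the statement that $\mu_t \preceq_{\mathrm{st}} \mu_{t'}$ whenever $t \leq t'$ (working with a stochastically monotone version of the kernel, or discarding a null set of $t$'s).

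Combining these two steps, for $t \leq t'$ we get $h_n(t) = \int g\, d\mu_t \leq \int g\, d\mu_{t'} = h_n(t')$, since $g$ is bounded and non-decreasing; hence $\phi_n = h_n(T_n)$ is non-decreasing in $T_n$, $\mathbb{P}_{\delta^0}$-almost surely. I expect the only real friction to be measure-theoretic bookkeeping rather than anything conceptual: making the ``almost every $t$'' qualifiers coherent across the kernel, the identity $h_n(t) = \int g\, d\mu_t$, and the monotonicity-in-$t$ statement, so that one genuinely obtains a non-decreasing version of $h_n$ (rather than a function monotone only outside an uncontrolled null set). A clean way to sidestep the regular-conditional-distribution machinery, if preferred, is to take the coupling form of Assumption 2 --- random pairs $(T_n, T)$ and $(\tilde T_n, \tilde T)$ with the correct marginal and conditional laws and with $T_n \leq \tilde T_n$ implying $T \leq \tilde T$ --- and apply $g$ directly; I would mention this as the alternative route.
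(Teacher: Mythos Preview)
Your proposal is correct and follows essentially the same route as the paper's proof, which is a two-line sketch: it rewrites $\phi_n = \mathbb{E}^{\mathbb{P}_{\delta^0}}[\mathbb{E}[\phi \mid T] \mid T_n]$ via the tower property and then simply asserts the conclusion from Assumptions~1 and~2. What you have done is unpack that final assertion---Doob--Dynkin to pass to $g$ and $h_n$, a regular conditional distribution $\mu_t$, and the integral characterization of $\preceq_{\mathrm{st}}$---which is exactly the content the paper leaves implicit; your measure-theoretic caveats about null sets and versions go beyond the level of rigor the paper aims for but do not change the argument.
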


        The first condition in Proposition \ref{prp:condition_2} is very natural.
        For example, it holds if the Monotone Likelihood Ratio property holds with respect to $T$, and $\phi$ maximizes \emph{some} expected utility $U$ under \emph{some} alternative $\mathbb{P}_{\delta^+}$, $\delta^+ \geq \delta^0$,
        \begin{align*}
            \max_{\phi : \textnormal{valid}} \mathbb{E}^{\mathbb{P}_{\delta^+}}[U(\phi)],
        \end{align*}
        where $U$ is assumed to be some concave and non-decreasing utility function that satisfies some regularity conditions.
        \added{Indeed, as we briefly discuss in Section \ref{sec:e-values}}, \citet{koning2024continuoustesting} shows that under such conditions the optimizing test $\phi$ is non-decreasing in the likelihood ratio between $\mathbb{P}_{\delta^0}$ and $\mathbb{P}_{\delta^+}$, which is non-decreasing in $T$ by the Monotone Likelihood Ratio property so that $\phi$ is also non-decreasing in $T$.

        Optimizing some utility against some alternative is quite a weak condition that seems reasonable in most applications of Theorem \ref{thm:mlr}.
        This means only the second condition truly needs to be checked, for which we can use Proposition \ref{prp:condition_2}.
        One may suspect that the second condition in Proposition \ref{prp:condition_2} is somehow implied by the assumptions of Theorem \ref{thm:mlr} if we additionally assume that a Monotone Likelihood Ratio property holds with respect to a sufficient statistic $T$.
        \added{Unfortunately this does not guarantee that $T$ is stochastically non-decreasing in $T_n$, which we demonstrate with a counterexample in Appendix \ref{app:counter}}.

      \section{Application to the $z$-test} \label{sec:ztest}
        In this section, we illustrate our methods in the context of a one-sided $z$-test.
        Suppose that $X_1, \dots, X_N$ are independently drawn from the normal distribution $\mathcal{N}(\mu,\sigma^2)$, with mean $\mu \in \mathbb{R}$ and $\sigma>0$.
        Then, the uniformly most powerful test for testing the null hypothesis $\mu = 0$ against the alternative hypothesis $\mu > 0$ is the one-sided $z$-test.

        At a given level $\alpha > 0$, this test equals
        \begin{align*}
            \phi_N 
                = \mathbb{I}\left\{\frac{1}{\sqrt{N}} \sum_{i=1}^N \frac{X_i}{\sigma} > z_{1-\alpha}\right\},
        \end{align*}
        where $z_{1-\alpha}$ is the $\alpha$ upper-quantile of the distribution $\mathbb{P} = \mathcal{N}(0, 1)$.
        By Theorem \ref{thm:simple}, for $n < N$, the induced anytime valid sequential test is given by
        \begin{align*}
            \phi_n'
                &=  \mathbb{P}\left(\frac{1}{\sqrt{N}}\sum_{i=1}^N \frac{X_i}{\sigma} > z_{1-\alpha} \mid X_1=x_1, \dots, X_n=x_n\right) \\
                &= \mathbb{P}\left(\frac{1}{\sqrt{N}}\sum_{i=n+1}^N \frac{X_i}{\sigma} > z_{1-\alpha} - \tfrac{1}{\sqrt{N}}\sum_{i=1}^n \frac{x_i}{\sigma}\right)
                = \Phi\left(\frac{\sum_{i=1}^n \frac{x_i}{\sigma} -  \sqrt{N}z_{1-\alpha}}{\sqrt{N - n}}\right),
        \end{align*}
        where $\Phi$ is the CDF of the standard normal distribution $\mathcal{N}(0, 1)$, and $x_1, \dots, x_n$ are the realizations of $X_1, \dots, X_n$.
        If $n = N$, this is poorly defined as the denominator equals zero, but still works if we define $y / 0$ as $+\infty$ if $y > 0$ and as $-\infty$ if $y < 0$.

        To interpret the sequential test $\phi_n'$ and compare it to the $z$-test at time $N$, $\phi_N$, it helps to write it as
        \begin{align}\label{eq:seq_z-test_interpretable}
            \Phi\left(\frac{N^{-1/2} \sum_{i=1}^n \frac{x_i}{\sigma} - z_{1 - \alpha}}{\sqrt{r_n}}\right),
        \end{align}
        where $r_n := (N - n)/N$ is the proportion of remaining observations.
        
        Here, we see that the argument of $\Phi$ is simply the difference between the current progress on the $z$-score test statistic $N^{-1/2} \sum_{i=1}^n \frac{x_i}{\sigma}$ and the critical value $z_{1-\alpha}$, divided by the square-root of the proportion $r_n$ of observations that remain.
        This means that if there are few observations remaining, so $r_n$ is close to 0, then the distance between the progress on the test statistic and critical value is inflated.
        If no observations remain, $r_n = 0$, the progress is inflated to either $+\infty$ or $-\infty$, depending on whether the test statistic exceeds the critical value or not.
        As $\Phi(+\infty) = 1$ and $\Phi(-\infty) = 0$, the function $\Phi$ then translates this to a rejection or non-rejection of the hypothesis.

        \subsection{The one-sided hypothesis $\mu \leq 0$}\label{sec:one-sided_z-test}
            So far, we have only considered the sequential $z$-test that is anytime valid for $\mu = 0$.
            We now explain why this same sequential test is anytime valid for the entire composite hypothesis $\mu \leq 0$.
    
            The idea is to construct the one-sided sequential $z$-test for every $\mu \leq 0$, and then plug these into Theorem \ref{thm:composite} to obtain an anytime valid test for the composite hypothesis that $\mu \leq 0$.
            We find that the resulting test coincides with the test for $\mu = 0$.
            
            The one-sided $z$-test for $\mu$ at level $\alpha > 0$ on the $[0, 1]$-scale equals
            \begin{align*}
                \phi^{\mu}
                    = \mathbb{I}\left\{\frac{1}{\sqrt{N}} \sum_{i = 1}^N \frac{X_i}{\sigma} > z_{1-\alpha} + \sqrt{N} \frac{\mu}{\sigma}\right\}.
            \end{align*}
            As $\mu \leq 0$, we have $\phi^0 = \inf_{\mu \leq 0} \phi^\mu$.
            Next, using $\mathbb{E}^{\mu}$ and $\mathbb{P}^\mu$ to denote the expectation and probability under $\mathcal{N}(\mu, \sigma^2)$, we have for $n < N$,
            \begin{align*}
                \phi_n^{\mu}
                    &= \mathbb{E}^{\mu}[\phi^\mu \mid X_1 = x_1, \dots, X_n = x_n]
                    = \mathbb{P}^{\mu}\left(\sum_{i = n + 1}^N \frac{X_i}{\sigma} > N^{1/2}z_{1-\alpha} +N\frac{\mu}{\sigma} - \sum_{i=1}^n \frac{x_i}{\sigma}\right) \\
                    &= \mathbb{P}^0\left(\sum_{i = n + 1}^N \frac{X_i + \mu}{\sigma} > N^{1/2}z_{1-\alpha} + N\frac{\mu}{\sigma} - \sum_{i=1}^n \frac{x_i}{\sigma} \right) \\
                    &= \mathbb{P}^0\left(\sum_{i = n + 1}^N \frac{X_i}{\sigma} > N^{1/2}z_{1-\alpha} +n\frac{\mu}{\sigma} - \sum_{i=1}^n \frac{x_i}{\sigma} \right)
                    = \Phi\left(\frac{N^{-1/2} \sum_{i=1}^n \frac{x_i}{\sigma} - z_{1-\alpha} - nN^{-1/2}\frac{\mu}{\sigma}}{\sqrt{\frac{N-n}{N}}}\right),
            \end{align*}
            which is the same as \eqref{eq:seq_z-test_interpretable}, but with an extra term involving $nN^{-1/2}\frac{\mu}{\sigma}$ in the numerator.
            The argument of $\Phi$ is decreasing in $\mu$, and $\Phi$ itself is an increasing function.
            As $\mu \leq 0$, this implies $\inf_{\mu \leq 0} \phi_n^\mu = \phi_n^0$; the sequential $z$-test for $\mu = 0$.
            Moreover, as each $\phi^\mu$ is valid for $\mathcal{N}(\mu, \sigma^2)$, the sequential test is valid for the composite hypothesis $\mu \leq 0$ by Theorem \ref{thm:composite}.

    \section{Application to the $t$-test}\label{sec:t-test}
        In this section, we apply the tools developed in Section \ref{sec:inducing_sequential_test} and Section \ref{sec:mlr} in particular, to construct an anytime valid sequential version of the one-sided $t$-test that is valid for the composite null hypothesis
        \begin{align*}
            H
                = \{\mathbb{P}_{\mu,\sigma}=\mathcal{N}(\mu,\sigma^2):\mu\leq0,\sigma^2>0\}.
        \end{align*}

        We also discuss how it can be easily computed, compare it to the sequentialized $z$-test, explain how this connects to the log-utility-optimal $t$-test studied by \citet{perez2024e-statistics}, \citet{wang2024anytime}, and \citet{grunwald2024safe}, and note how it can be applied beyond Gaussian distributions to general spherical distributions.
        The sequentialized traditional $t$-test is also studied by \citet{posch2004conditional}, but they only show its validity for $\mu = 0$.
        We also derive a much simpler expression by passing to the beta distribution.
        
        \subsection{Inducing a sequential test from the $t$-test}\label{sec:trej}
            Suppose we sequentially observe real-valued data $X_1, X_2, \dots$, which we stack into the tuples $X^n \eqdef (X_1, \dots, X_n)$, $n \geq 1$.
            Based on this data, we compute a sequence of $t$-statistics, $(T_n)_{n\geq1}$, where
            \begin{align*}
                T_n 
                = \frac{\frac{1}{\sqrt{n}}\sum_{i=1}^{n}X_i}{\sqrt{\frac{1}{n-1}\sum_{i=1}^{n}\left(X_i - \frac{1}{n}\sum_{j=1}^{n} X_j\right)^2}},
            \end{align*}
            for  $n\geq2$, and $T_1 = -\infty$ if $X_1 < 0$, $T_1 = 0$ if $X_1 = 0$ and $T_1 = +\infty$ if $X_1 > 0$.
            We consider the filtration $(\mathcal{F}_n)_{n \geq 0}$ induced by the $t$-statistics, $\mathcal{F}_n = \sigma\left(T_1,\dots,T_n\right)$.
            \added{Although the observations $X_1, X_2, \dots$ are i.i.d., the induced $t$-statistics are dependent and not identically distributed.}

            For $n > 1$ observations, the traditional one-sided $t$-test for hypothesis $H$ equals
            \begin{align*}
                \phi_N
                    = \mathbb{I}\left\{T_N >c_{\alpha,N}\right\},
            \end{align*}
            where $c_{\alpha,N}$ is the $\alpha$ upper-quantile of the $t$-distribution with $N-1$ degrees of freedom.
            
            Given some planned number of observations $N$, we can induce a sequential test
            \begin{align*}
               \phi_n' = \mathbb{E}^{\mathcal{N}(0,1)}\left[\phi_N \mid \mathcal{F}_n\right],
            \end{align*}
            for every $n \geq 0$.
            As $\phi_N$ only depends on $T_N$ and $T_N$ is known at time $N$ by construction, we have that at time $N$ this indeed coincides with the traditional $t$-test: $\phi_N' = \phi_N$.
            From Theorem \ref{thm:mlr} it immediately follows that it is anytime valid for $\mu = 0$ and $\sigma^2 = 1$, and since the statistics $T_1, \dots, T_N$ are scale-invariant it is anytime valid for the composite hypothesis $\mu = 0$ and $\sigma^2 > 0$.

            In Proposition \ref{prp:seqt}, we show that this same sequential test is in fact also valid for the composite hypothesis $H$, that $\mu \leq 0$ and $\sigma^2 > 0$.
            
            \begin{prp}\label{prp:seqt}
                The induced sequential t-test $(\phi_n')$ is anytime valid for $H$.
            \end{prp}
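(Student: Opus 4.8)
The plan is to exhibit $(\phi_n')$ as an instance of Theorem~\ref{thm:mlr}, after reparametrizing $H$ by the effect size. First I would reduce to a one-parameter family: each $t$-statistic $T_k$ is invariant under positive rescaling of the data, so the joint law of the sequence $(T_1,T_2,\dots)$ under $\mathbb{P}_{\mu,\sigma}=\mathcal{N}(\mu,\sigma^2)$ depends on $(\mu,\sigma)$ only through the effect size $\delta:=\mu/\sigma$, with $\mu\le 0\iff\delta\le 0$. Writing $\mathbb{P}^\delta$ for this law and setting $\delta^0:=0$, every $\phi_\tau'$ is $\mathcal{F}_\tau$-measurable and hence a function of $(T_1,\dots,T_\tau)$, so $\mathbb{E}^{\mathbb{P}_{\mu,\sigma}}[\phi_\tau']$ depends only on $\delta$ and $\sup_{\mathbb{P}\in H}\mathbb{E}^{\mathbb{P}}[\phi_\tau']=\sup_{\delta\le\delta^0}\mathbb{E}^{\delta}[\phi_\tau']$. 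It therefore suffices to bound the latter by $\alpha$ for every stopping time $\tau$, which is precisely what Theorem~\ref{thm:mlr} delivers for the family $(\mathbb{P}^\delta)_{\delta\le 0}$ once its hypotheses are verified with $\Delta=(-\infty,0]$.

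Next I would check those hypotheses. The martingale condition is immediate: $\phi_n'=\mathbb{E}^{\delta^0}[\phi_N\mid\mathcal{F}_n]$ is the Doob martingale of $\phi_N$ under $\mathbb{P}^{\delta^0}$, it is $[0,1]$-valued hence non-negative, and it starts at $\phi_0'=\mathbb{P}^{\delta^0}(T_N>c_{\alpha,N})=\alpha$ because $T_N$ has the central $t_{N-1}$ law under $\delta^0=0$ and $c_{\alpha,N}$ is its upper-$\alpha$ quantile. For the remaining two conditions I would use that, when the law of $(T_1,\dots,T_n)$ is restricted to $\mathcal{F}_n=\sigma(T_1,\dots,T_n)$, the $t$-statistic $T_n$ is a sufficient statistic for $\delta$ and the induced family of laws of $T_n$ has the Monotone Likelihood Ratio property in $\delta$; this is the structural fact about the sequence of noncentral $t$-statistics already used to obtain validity at $\delta^0$, and it in particular yields the Markov-type reduction $\mathbb{E}^{\delta^0}[\phi_N\mid\mathcal{F}_n]=\mathbb{E}^{\delta^0}[\phi_N\mid T_n]$. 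Granting this, $\phi_n'$ is a non-decreasing function of $T_n$ by the filtration analogue of Proposition~\ref{prp:condition_2}: $\phi_N=\mathbb{I}\{T_N>c_{\alpha,N}\}$ is non-decreasing in $T_N$, and the conditional law of $T_N$ given $T_n$ under $\delta^0$ is stochastically non-decreasing in $T_n$. With all hypotheses in place, Theorem~\ref{thm:mlr} gives $\sup_{\delta\le 0}\mathbb{E}^{\delta}[\phi_\tau']\le\alpha$ for every stopping time $\tau$, which by the first step is the desired anytime validity for $H$.

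The main obstacle is the structural claim invoked above: that conditioning on the past $t$-statistics $(T_1,\dots,T_{n-1})$ preserves the one-dimensional MLR structure, so that $T_n$ remains sufficient for $\delta$ with MLR and the conditional law of $T_N$ given $\mathcal{F}_n$ depends on $T_n$ alone. I would obtain this either by citing the corresponding results in the sequential $t$-test literature, in particular \citet{grunwald2025supermartingales} and \citet{perez2024e-statistics}, or by writing out the joint density of $(T_1,\dots,T_n)$ under $\mathcal{N}(\delta,1)$ and exhibiting a factorization of the form $h(t_1,\dots,t_n)\,g_\delta(t_n)$ with $\delta\mapsto g_\delta$ possessing MLR; the attendant monotonicity and stochastic-ordering statements then follow from standard relations between MLR, sufficiency and stochastic ordering. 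The scale-invariance reduction and the martingale and starting-value checks are routine, so the entire difficulty is concentrated in this one structural lemma about the sequence of $t$-statistics.
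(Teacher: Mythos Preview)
Your proposal is correct and follows essentially the same route as the paper: reduce to a one-parameter effect-size family by scale invariance, verify the Doob-martingale and starting-value conditions under $\delta^0=0$, invoke the MLR property of $T_n$ from \citet{grunwald2025supermartingales}, and then establish Condition~2 of Theorem~\ref{thm:mlr} via Proposition~\ref{prp:condition_2} using that $\phi_N$ is non-decreasing in $T_N$ and that $T_N$ is stochastically non-decreasing in $T_n$ under $\delta^0$. The one point where the paper is more concrete than your sketch is the ``structural claim'' you flag at the end: rather than citing literature or factorizing the joint density of $(T_1,\dots,T_n)$, the paper proves the stochastic ordering (Lemma~\ref{lem:increasing}) by passing to the Beta-statistic $B_n=\iota_n'X^n/\|X^n\|$ and deriving the explicit conditional law of $B_N$ given $B_n$ under $\delta^0$ via the spherical decomposition of Gaussian vectors (Lemma~\ref{lem:conditional_beta}); this representation simultaneously delivers the Markov-type reduction $\mathbb{E}^{\delta^0}[\phi_N\mid\mathcal{F}_n]=\mathbb{E}^{\delta^0}[\phi_N\mid T_n]$ you correctly identified as the crux.
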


            We also highlight Lemma \ref{lem:increasing}, which is a key step in the proof of Proposition \ref{prp:seqt} that allows us to apply Proposition \ref{prp:condition_2}.
            Appendix \ref{app:proofseqt}, we even derive a simple expression for the full conditional distribution of $T_N$ given $T_n$.
            \begin{lem}\label{lem:increasing}
                Under $\mathcal{N}(0,\sigma^2),$ for any $\sigma^2>0$, $T_N$ is stochastically increasing in $T_n$, for every $N>1$, $n\leq N$.
            \end{lem}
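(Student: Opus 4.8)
The plan is to reduce the claim, via scale invariance and the spherical symmetry of the Gaussian model, to an elementary monotonicity fact, and then to average out a nuisance quantity that turns out to be independent of $T_n$. First, since every $T_k$ is invariant under $X_i \mapsto c X_i$ with $c>0$, the joint law of $(T_n, T_N)$ under $\mathcal{N}(0,\sigma^2)$ does not depend on $\sigma^2$, so I may take $\sigma^2 = 1$. Next I reparametrize: a direct computation gives $T_k = g_k(U_k)$, where $U_k := \frac{\sum_{i=1}^k X_i}{\sqrt{k}\,\sqrt{\sum_{i=1}^k X_i^2}}\in[-1,1]$ is the signed cosine of the angle between $X^k$ and the all-ones vector, and $g_k(u) = \sqrt{k-1}\,u/\sqrt{1-u^2}$ is a strictly increasing bijection $(-1,1)\to\mathbb{R}$ (with $\pm1\mapsto\pm\infty$, consistent with the definition of $T_1$). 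Since $g_n$ and $g_N$ are strictly increasing, it suffices to prove that $U_N$ is stochastically increasing in $U_n$.

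For the latter I use the classical decomposition of a uniform point on the sphere. Write $V := X^N/\|X^N\|$, which is uniform on $S^{N-1}$, and split it into its first $n$ and last $N-n$ coordinates. There is an almost sure representation in which $\rho^2 := \sum_{i=1}^n V_i^2 \sim \mathrm{Beta}(n/2,(N-n)/2)$, the normalized first block $\xi\in S^{n-1}$ and normalized second block $\eta\in S^{N-n-1}$ are uniform, and $\rho,\xi,\eta$ are mutually independent (for $n=1$ or $N-n=1$ read $S^0=\{-1,1\}$; the case $n=N$ is trivial since then $U_N=U_n$). In these terms $U_n = n^{-1/2}\sum_{i=1}^n\xi_i$ depends on $\xi$ alone, while
\[
U_N = \frac{1}{\sqrt{N}}\Bigl(\rho\sqrt{n}\,U_n + \sqrt{1-\rho^2}\,D\Bigr),\qquad D := \sum_{i=1}^{N-n}\eta_i .
\]
The crucial point is that $(\rho,D)$ is independent of $\xi$, hence of $U_n$; so the conditional law of $U_N$ given $U_n=u$ is the law of $N^{-1/2}\bigl(\rho\sqrt{n}\,u + \sqrt{1-\rho^2}\,D\bigr)$ with $(\rho,D)$ carrying its fixed unconditional distribution. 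Since $\rho\ge 0$, the map $u\mapsto N^{-1/2}\bigl(\rho\sqrt{n}\,u + \sqrt{1-\rho^2}\,D\bigr)$ is non-decreasing for every realization of $(\rho,D)$, so the event $\{U_N>s\}$ has probability non-decreasing in $u$ for each $s$; this is the desired stochastic monotonicity.

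The main obstacle is finding the coordinates in which the nuisance randomness decouples from $T_n$: conditioning directly on $X^n$, one sees that $U_N$ depends on $X^n$ through both $U_n$ and $B:=\sum_{i=1}^n X_i^2$, and it is not a priori clear how to average out $B$. The resolution is the observation — made transparent by the sphere/beta decomposition — that by spherical symmetry $\|X^n\|$ (hence $B$) is independent of $X^n/\|X^n\|$ (hence of $U_n$). The remaining work, verifying the identity $T_k=g_k(U_k)$ and the independence structure of the decomposition, is routine; the same computation in fact shows that the conditional law of $T_N$ given all of $\mathcal{F}_n=\sigma(T_1,\dots,T_n)$ depends only on $T_n$, which is what permits the application of Proposition~\ref{prp:condition_2}.
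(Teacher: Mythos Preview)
Your proof is correct and is essentially the same argument as the paper's. Both of you pass from $T_k$ to the angular statistic $U_k=B_k=\iota_k'X^k/\|X^k\|$ via the strictly increasing map $t\mapsto t/\sqrt{t^2+k-1}$, then exploit the spherical decomposition of $X^N/\|X^N\|$ into an independent triple (a $\mathrm{Beta}(n/2,(N-n)/2)$ radial part, a uniform on $S^{n-1}$, and a uniform on $S^{N-n-1}$) to write $U_N\mid U_n=u$ as an affine function of $u$ with non-negative slope and independent additive noise; your $(\rho^2,D/\sqrt{N-n})$ is exactly the paper's $(W,\widetilde B_{N-n})$, and your display for $U_N$ is line-for-line the paper's Lemma~\ref{lem:conditional_beta}. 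The only cosmetic difference is that the paper takes a short Gaussian detour ($Z^N=(\nu_n U^n,\tilde\nu_{N-n}\widetilde U^{N-n})$) to arrive at the decomposition you invoke directly.
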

            
            \begin{rmk}[Beyond the traditional $t$-test]
                An inspection of the proof of Proposition \ref{prp:seqt} shows that it only uses the fact that the traditional $t$-test is non-decreasing in the $t$-statistic $T_N$.
                This means that the result also applies to any other test that is non-decreasing in the t-statistic.
                This includes utility-optimal versions of the $t$-test, and the log-utility-optimal $t$-test derived by \citet{perez2024e-statistics}, and recently shown to be valid for the one-sided hypothesis by \citet{grunwald2025supermartingales}.
            \end{rmk}
            
            \begin{rmk}[Valid for spherical distributions]
                The tests here are actually not just valid for $H$, but even valid for a location-shift of a spherically invariant distribution.
                This is not surprising, as the t-test is valid under sphericity \citep{efron1969student}, and can even be viewed as a test for spherical invariance \citep{lehmann1949theory, koning2024more}.
                The link to the Gaussian distribution is that the i.i.d. multivariate Gaussian distribution is the only spherically invariant distribution with i.i.d. marginals, by the Herschel-Maxwell Theorem.
            \end{rmk}

        \subsection{Computation and comparison to $z$-test}
            A convenient expression for $\phi_n'$, which makes it straightforward to compute, is given by
            \begin{align*}
                \phi_n'
                    = \int_{0}^1 F_{B}\left(\frac{N^{-1/2} \sum_{i=1}^n \frac{x_i}{\|x^n\|_2} \sqrt{w} - \beta_{1-\alpha}}{\sqrt{r_n} \sqrt{1 - w}}\right) f(w)\, dw,
            \end{align*}
            where $F_B$ is the CDF of a Beta$\left(\frac{N-n-1}{2}, \frac{N-n-1}{2}\right)$-distribution on $[-1, 1]$, $\beta_{1-\alpha}$ is the $\alpha$ upper-quantile of a Beta$\left(\frac{N-1}{2},\frac{N-1}{2}\right)$-distribution on $[-1,1]$,  $r_n = (N-n) / N$ denotes the proportion of remaining observations, and $f$ is the density of a Beta$\left(\frac{n}{2}, \frac{N-n}{2}\right)$-distribution on $[0, 1]$.
            A derivation is given in Appendix \ref{app:deriv}.

            We have purposely written this expression in a way that makes it easy to compare to the expression we derived for the $z$-test:
            \begin{align*}
                \Phi\left(\frac{N^{-1/2} \sum_{i=1}^n \frac{x_i}{\sigma} - z_{1 - \alpha}}{\sqrt{r_n}}\right).
            \end{align*}
            There are two key differences.
            The first is that the $t$-test relies on the normalized data $x_i / \|x^n\|_2$ instead of the original data $x_i$, which in turn forces a switch to the Beta CDF and quantile.
            The second difference is the presence of $w$, which is due to the fact that, at time $n$, we do not know the relative length of the observed vector $x^n$ to the final vector $X^N$.
            For small $n$, the distribution of $w$ is more concentrated near 0, so that the $\sqrt{w}$-term `shrinks' the test statistic when $n$ is small.
            For large $n$, the distribution of $w$ is more concentrated near 1, so that the $\sqrt{w}$-term no longer shrinks the test statistic.
            At the same time, for large $n$, the $\sqrt{1 - w}$-term in the denominator inflates the distance between the test statistic and critical value when $n$ is large.

    \section{Test outcomes as conditional significance levels}\label{sec:test_as_significance_level}
        In this section, we formally discuss the idea of optional continuation: that one may use a $[0,1]$-valued test outcome as a significance level in subsequent testing.
        Our main claim is Theorem \ref{thm:seq_significance_level}, but we first present a simpler version with only two tests in Theorem \ref{thm:seq_significance_level_two}.

        Theorem \ref{thm:seq_significance_level_two} shows that a seemingly `inconclusive' test outcome $\phi_1 \in (0, 1)$ may be taken by any other person and used as a starting significance level in a subsequent experiment.
        
        \begin{thm}[Two tests]\label{thm:seq_significance_level_two}
            Suppose that $\phi_1$ is valid at level $\alpha$ and $\phi_2$ is conditionally valid at conditional level $\phi_1$: $\mathbb{E}^{\mathbb{P}}[\phi_2 \mid \phi_1] \leq \phi_1$, for every $\mathbb{P} \in H$.
            Then, $\phi_2$ is valid at level $\alpha$.
        \end{thm}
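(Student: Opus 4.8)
The plan is to reproduce the tower-property computation previewed in the introduction, now with the supremum over $H$ made explicit. First I would fix an arbitrary $\mathbb{P} \in H$ and observe that conditioning on $\phi_1$ means conditioning on the sub-$\sigma$-algebra $\sigma(\phi_1) \subseteq \mathcal{F}$ generated by the measurable test $\phi_1$; since $\phi_1$ and $\phi_2$ are $[0,1]$-valued, every expectation in sight is finite and the law of iterated expectations applies without integrability caveats.

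Then I would chain three facts: (i) the tower property, $\mathbb{E}^{\mathbb{P}}[\phi_2] = \mathbb{E}^{\mathbb{P}}\big[\mathbb{E}^{\mathbb{P}}[\phi_2 \mid \phi_1]\big]$; (ii) the conditional validity hypothesis, which says $\mathbb{E}^{\mathbb{P}}[\phi_2 \mid \phi_1] \leq \phi_1$ holds $\mathbb{P}$-almost surely, so that monotonicity of the expectation yields $\mathbb{E}^{\mathbb{P}}[\phi_2] \leq \mathbb{E}^{\mathbb{P}}[\phi_1]$; and (iii) the unconditional validity of $\phi_1$, namely $\mathbb{E}^{\mathbb{P}}[\phi_1] \leq \alpha$. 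Concatenating,
\[
    \mathbb{E}^{\mathbb{P}}[\phi_2]
        = \mathbb{E}^{\mathbb{P}}\big[\mathbb{E}^{\mathbb{P}}[\phi_2 \mid \phi_1]\big]
        \leq \mathbb{E}^{\mathbb{P}}[\phi_1]
        \leq \alpha,
\]
and since $\mathbb{P} \in H$ was arbitrary, taking the supremum gives $\sup_{\mathbb{P} \in H} \mathbb{E}^{\mathbb{P}}[\phi_2] \leq \alpha$, i.e. $\phi_2$ is valid at level $\alpha$.

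I do not expect a genuine obstacle here: the only points requiring a modicum of care are that the conditional-validity inequality (and the implied null set on which it may fail) is interpreted separately for each $\mathbb{P}$, and that boundedness of the tests makes all the manipulations legitimate. The real work of this section lies in the sequential generalization, Theorem~\ref{thm:seq_significance_level}, which iterates this argument along a filtration and controls the test outcome at an arbitrary stopping time; Theorem~\ref{thm:seq_significance_level_two} is simply its two-step base case.
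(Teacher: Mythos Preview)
Your proposal is correct and matches the paper's proof essentially line for line: fix $\mathbb{P}\in H$, apply the tower property, use the conditional validity bound, then the unconditional validity of $\phi_1$, and conclude by taking the supremum over $H$. Your added remarks on integrability and on interpreting the conditioning as $\sigma(\phi_1)$ are fine elaborations but not needed beyond what the paper writes.
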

        \begin{proof}
            For every $\mathbb{P} \in H$, we have $\mathbb{E}^{\mathbb{P}}[\phi_2]  = \mathbb{E}^{\mathbb{P}}[\mathbb{E}^{\mathbb{P}}[\phi_2\mid \phi_1]] \leq \mathbb{E}^{\mathbb{P}}[\phi_1] \leq \alpha$.
        \end{proof}

        Theorem \ref{thm:seq_significance_level} generalizes this to an anytime validity context.
        Here, we show that we may abort an anytime valid sequential test at an arbitrary stopping time $\tau$, and then continue experimentation given the present information by starting up a new sequential test that is conditionally anytime valid at the conditional significance level $\phi_\tau$.
        This may be used to switch to a different sequential test, used to redesign the experiment, or even used as a starting point in (unplanned) future experiments (see Remark \ref{rmk:branching}).
        
        \begin{thm}[Anytime validity and optional continuation]\label{thm:seq_significance_level}
            Suppose that $(\phi_n)$ is an anytime valid sequential test at level $\alpha$, and $\sigma$ is an arbitrary stopping time, both adapted to the filtration $(\mathcal{F}_n)$.
            Let $\phi^*$ be an $\mathcal{F}$-measurable test that is valid at the \added{conditional} significance level $\phi_{\sigma}$ conditional on $\mathcal{F}_{\sigma}$, 
            \begin{align*}
             \mathbb{E}^{\mathbb{P}}[\phi^* \mid \mathcal{F}_{\sigma}] \leq \phi_{\sigma}, \quad \added{\text{for every $\mathbb{P}\in H$}}.
            \end{align*}
            Then, $\phi^*$ is valid.
            
            \added{Suppose that the sequential test $(\phi_n^*)_{n \geq \sigma}$ adapted to $(\mathcal{F}_n)_{n \geq \sigma}$ is conditionally anytime valid at conditional significance level $\phi_\sigma$: for every stopping time $\tau$ with respect to $(\mathcal{F}_n)_{n \geq \sigma}$,
            \begin{align*}
                \mathbb{E}^{\mathbb{P}}\left[\phi^*_\tau \mid \mathcal{F}_\sigma\right] \leq \phi_\sigma, \quad \textnormal{ for every } \mathbb{P}\in H.
            \end{align*}
            Then, the sequential test $(\phi_n^\dagger) := \phi_1, \dots, \phi_\sigma, \phi_{\sigma + 1}^*, \phi_{\sigma + 2}^*, \dots$ is anytime valid.}      
        \end{thm}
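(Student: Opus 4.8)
The plan is to handle the two assertions separately. The first assertion---that $\phi^*$ is valid---follows from the tower property and optional stopping, essentially mirroring the proof of Theorem \ref{thm:seq_significance_level_two} but with the stopping time $\sigma$ in place of the first test. Concretely, for each $\mathbb{P} \in H$ I would write $\mathbb{E}^{\mathbb{P}}[\phi^*] = \mathbb{E}^{\mathbb{P}}[\mathbb{E}^{\mathbb{P}}[\phi^* \mid \mathcal{F}_\sigma]] \leq \mathbb{E}^{\mathbb{P}}[\phi_\sigma]$, and then invoke anytime validity of $(\phi_n)$ applied to the stopping time $\sigma$ to conclude $\mathbb{E}^{\mathbb{P}}[\phi_\sigma] \leq \alpha$. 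The only mild subtlety is that $\sigma$ may be infinite, but $\phi_\infty := \limsup_n \phi_n$ is defined and the validity of $\phi_\sigma$ for all stopping times (including infinite ones) is exactly the definition of anytime validity given in the setup; measurability of $\phi^*$ with respect to $\mathcal{F} \supseteq \mathcal{F}_\sigma$ makes the conditional expectation well-defined.

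For the second assertion---that the spliced sequence $(\phi_n^\dagger)$ is anytime valid---I would fix an arbitrary stopping time $\rho$ adapted to $(\mathcal{F}_n)$ and a $\mathbb{P} \in H$, and bound $\mathbb{E}^{\mathbb{P}}[\phi_\rho^\dagger]$. The natural move is to split on whether $\rho$ occurs before or after $\sigma$: on $\{\rho \leq \sigma\}$ we have $\phi_\rho^\dagger = \phi_\rho$, and on $\{\rho > \sigma\}$ we have $\phi_\rho^\dagger = \phi_\rho^*$ with $\rho$ restricted to $\{\rho > \sigma\}$ behaving as a stopping time with respect to $(\mathcal{F}_n)_{n \geq \sigma}$. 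Writing $\rho \wedge \sigma$ and $\rho \vee \sigma$, I would condition on $\mathcal{F}_\sigma$: on the event $\{\rho > \sigma\}$ (which is $\mathcal{F}_\sigma$-measurable), apply the conditional anytime validity hypothesis to the stopping time $\rho \vee \sigma$ to get $\mathbb{E}^{\mathbb{P}}[\phi_{\rho \vee \sigma}^* \mid \mathcal{F}_\sigma] \leq \phi_\sigma$; on the complement $\{\rho \leq \sigma\}$, $\phi_\rho^\dagger = \phi_{\rho \wedge \sigma}$ is already $\mathcal{F}_{\rho \wedge \sigma}$-measurable. Combining, $\mathbb{E}^{\mathbb{P}}[\phi_\rho^\dagger \mid \mathcal{F}_\sigma] \leq \mathbb{I}\{\rho \leq \sigma\}\phi_{\rho \wedge \sigma} + \mathbb{I}\{\rho > \sigma\}\phi_\sigma$; but $\phi_{\rho \wedge \sigma}$ on $\{\rho \leq \sigma\}$ equals $\phi_\rho$ and on $\{\rho > \sigma\}$ the right-hand side is $\phi_\sigma = \phi_{\rho \wedge \sigma}$, so in fact the right-hand side is precisely $\phi_{\rho \wedge \sigma}$. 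Taking expectations and using anytime validity of $(\phi_n)$ for the stopping time $\rho \wedge \sigma$ yields $\mathbb{E}^{\mathbb{P}}[\phi_\rho^\dagger] \leq \mathbb{E}^{\mathbb{P}}[\phi_{\rho \wedge \sigma}] \leq \alpha$.

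The step I expect to be the main obstacle is the careful bookkeeping around conditioning on $\mathcal{F}_\sigma$ when $\sigma$ is itself random: I need that $\{\rho > \sigma\} \in \mathcal{F}_\sigma$, that $\rho \vee \sigma$ is a genuine stopping time for the shifted filtration $(\mathcal{F}_n)_{n \geq \sigma}$ in whatever sense the hypothesis requires, and that the tower property $\mathbb{E}^{\mathbb{P}}[\,\cdot\,] = \mathbb{E}^{\mathbb{P}}[\mathbb{E}^{\mathbb{P}}[\,\cdot \mid \mathcal{F}_\sigma]]$ applies with a random $\sigma$---all standard facts about stopping-time $\sigma$-algebras, but each needs a line. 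A secondary point is the treatment of $\{\sigma = \infty\}$ or $\{\rho = \infty\}$: on these events the relevant quantities are defined via $\limsup$, and I would note that the inequalities pass to the limit by the same uniform-integrability-free argument (tests are bounded in $[0,1]$) used elsewhere in the paper, e.g. in Proposition \ref{prp:validity_terminal_test}. Once these measurability and limiting technicalities are dispatched, the algebra collapses cleanly to the displayed chain of inequalities.
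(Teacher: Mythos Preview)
Your proposal is correct and mirrors the paper's proof almost exactly: the first claim is handled by the tower property plus anytime validity at $\sigma$, and the second by splitting on $\{\rho \le \sigma\}$ versus $\{\rho > \sigma\}$, applying the conditional hypothesis to $\rho \vee \sigma$, and recombining into $\phi_{\rho \wedge \sigma}$. The only cosmetic difference is that the paper works directly with unconditional expectations (pulling the indicator $\mathbb{I}\{\tau > \sigma\}$ out of the conditional expectation) rather than first bounding $\mathbb{E}^{\mathbb{P}}[\phi_\rho^\dagger \mid \mathcal{F}_\sigma]$ pointwise, but the substance and the technical checkpoints you flag (that $\{\rho > \sigma\} \in \mathcal{F}_\sigma$ and that $\rho \vee \sigma$ is a stopping time for the post-$\sigma$ filtration) are identical.
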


        \added{
        \begin{rmk}[Increasing power with optional continuation?]\label{rmk:increasing_power}
            One may wonder whether optional continuation may be used to obtain a more powerful testing procedure.
            To answer this question, we must be careful about how we define `power': the sequential context forces us to be explicit about the time at which the power is attained.
    
            To highlight this, let us consider the most powerful test $\phi_N$ at time $N$ for some simple hypothesis $\mathbb{P}$.
            Now, its induced sequential test $(\phi_n')$ is implicitly performing optional continuation under the hood at every time $n$: $\mathbb{E}^{\mathbb{P}}[\phi_n' \mid \mathcal{F}_{n-1}] = \phi_{n-1}'$, since $(\phi_n')$ is a martingale.
            As $\phi_N$ is assumed to be most powerful, straying from this implicit optional continuation cannot increase power at time $N$.
            However, with optional continuation we can obtain more power at an earlier or later (stopping) time $\tau \neq N$.
        \end{rmk}
        }
        
        \added{
        \begin{rmk}[Branching filtrations]\label{rmk:branching}
            When we redesign the experiment at a stopping time $\tau$, we implicitly branch from the planned filtration $(\mathcal{F}_n)$ into some new filtration $(\mathcal{F}_n')$ which overlaps with $(\mathcal{F}_n)$ at every time $n \leq \tau$, where $\mathcal{F}_n, \mathcal{F}_n' \subseteq \mathcal{F}$ for all $n$.
            Now, one may worry that we would need to generalize sequential tests and anytime validity to such `data-dependent branching filtrations'.
            
            Luckily, we may view this branching process as implicitly defining a bona fide filtration.
            To understand this, we may capture the decision to branch by some stopping time $\tau$ and an $\mathcal{F}_\tau$-measurable decision variable $D \in \{0, 1\}$, where $D = 0$ indicates we remain in our original filtration $(\mathcal{F}_n)$ and $D = 1$ that we branch into a new filtration $(\mathcal{F}_n')$.
            This then implicitly defines the filtration $(\mathcal{G}_n)$ as
            \begin{align*}
                \mathcal{G}_n
                    = \left\{F \in \mathcal{F} : F \cap \{n \leq \tau\} \in \mathcal{F}_n, F \cap \{n > \tau, D = 0\} \in \mathcal{F}_n, F \cap \{n > \tau, D = 1\} \in \mathcal{F}_n'\right\},
            \end{align*}
            for every $n$, so that we can view our sequential tests as adapted to the bona fide filtration $(\mathcal{G}_n)$.
            This idea may be iteratively applied to generalize it to branching decisions at multiple stopping times, and is easily expanded to more than two branching options.
        \end{rmk}
        }

        \begin{rmk}[Binary tests]
            Binary \added{($\{0, 1\}$-valued)} tests, which are common in practice, are usually not the ideal choice for the first test $\phi_1$ in Theorem \ref{thm:seq_significance_level_two}.
            \added{Indeed, if $\phi_1 = 0$, then initializing the second test at conditional} significance level 0 will never lead to a subsequent rejection.
            Moreover, if $\phi_1 = 1$, then we have already attained the desired rejection at level $\alpha$.
            We suspect this may be the reason why this idea has not been picked up before, as binary tests are highly popular in practice.
        \end{rmk}
        
        \begin{rmk}[Relationship to multiplying e-values]\label{rmk:multiply_e-values}
            We may perform this same exercise with e-values; see Section \ref{sec:e-values} for a primer on e-values.
            There, we find this is equivalent to multiplying sequentially valid e-values.
            
            In the context of Theorem \ref{thm:seq_significance_level_two}, the trick is to shift to the evidence scale by making the substitution $\varepsilon_1 = \phi_1/\alpha$ and $\varepsilon_2 = \phi_2/\phi_1$.
            After observing the outcome of the level $\alpha$ test $\varepsilon_1$, we choose a valid level $\alpha\varepsilon_1$ test $\varepsilon_2$ taking value in $[0, 1/(\alpha \varepsilon_1)]$,
            \begin{align*}
                \mathbb{E}^{\mathbb{P}}\left[\varepsilon_2 \mid \varepsilon_1 \right] \leq 1, \quad \textnormal{for every } \mathbb{P} \in H.
            \end{align*}
            The combined evidence then equals the product $\varepsilon_1\varepsilon_2$, which is indeed valid:
            \begin{align*}
                \sup_{\mathbb{P} \in H} \mathbb{E}^{\mathbb{P}}\left[\varepsilon_1\varepsilon_2\right]
                    &= \sup_{\mathbb{P} \in H} \mathbb{E}^{\mathbb{P}}\left[\varepsilon_1 \mathbb{E}^{\mathbb{P}}[\varepsilon_2 \mid \varepsilon_1]\right]
                    \leq \sup_{\mathbb{P} \in H} \mathbb{E}^{\mathbb{P}}\left[\varepsilon_1\right]
                    \leq 1.
            \end{align*}
            Moreover, it is of level $\alpha$ as $\varepsilon_1\varepsilon_2$ is $[0, \varepsilon_1 \times 1/(\varepsilon_1\alpha)] = [0, 1/\alpha]$-valued.
        \end{rmk}

        \subsection{Switching significance levels}\label{sec:switching_alpha}
            A natural question to ask is whether we need to stick to the significance level $\alpha$ that we set out with at the start, or whether we may adaptively replace this with some other significance level $\alpha'$.
            It turns out that we may indeed adaptively change it, as long as we are careful with formulating the resulting unconditional Type I error guarantee.
            
            For simplicity, we stick to the setting of Theorem \ref{thm:seq_significance_level_two}, where we first perform test $\phi_1$ at level $\alpha$, and want to subsequently conduct a test $\phi_2$ so that the unconditional significance level of $\phi_2$ becomes $\alpha'$.

            Let us start with the simple case, where the new significance level $\alpha'$ is chosen independently from the data.
            Then, we can simply choose $\phi_2$ to have conditional level $\phi_1 \alpha'/\alpha$,
            \begin{align*}
                \mathbb{E}^{\mathbb{P}}[\phi_2 \mid \mathcal{F}_1] \leq \phi_1 \frac{\alpha'}{\alpha}, \textnormal{ for every } \mathbb{P} \in H,
            \end{align*}
            and follow the proof of Theorem \ref{thm:seq_significance_level_two} to establish for every $\mathbb{P} \in H$ that
            \begin{align*}
                \mathbb{E}^{\mathbb{P}}[\phi_2]
                    &= \mathbb{E}^{\mathbb{P}}[\mathbb{E}^{\mathbb{P}}[\phi_2\mid \mathcal{F}_1]]
                    \leq \mathbb{E}^{\mathbb{P}}\left[\phi_1 \frac{\alpha'}{\alpha}\right]
                    \leq \frac{\alpha'}{\alpha} \mathbb{E}^{\mathbb{P}}[\phi_1] 
                    \leq  \alpha'.
            \end{align*}

            However, we would like to allow $\alpha'$ to depend on the data we have collected so far.
            Unfortunately, the desire to have the unconditional Type I error guarantee on $\phi_2$,
            \begin{align*}
                \mathbb{E}^{\mathbb{P}}[\phi_2] \leq \alpha', \textnormal{ for every } \mathbb{P} \in H,
            \end{align*}
            is then a strange requirement, as $ \mathbb{E}^{\mathbb{P}}[\phi_2]$ is $\mathcal{F}_0$-measurable and $\alpha'$ is $\mathcal{F}_1$-measurable.
            Luckily, this can be resolved by shifting to data-dependent Type I error control recently developed in \citet{koning2024posthocalphahypothesistesting} and \citet{grunwald2024beyond},
            \begin{align*}
                \mathbb{E}^{\mathbb{P}}\left[\frac{\phi_2}{\alpha'}\right] \leq 1, \textnormal{ for every } \mathbb{P} \in H.
            \end{align*}
            Under this generalized Type I error control, initializing $\phi_2$ to be valid at conditional level $\phi_1 \alpha'/\alpha$ indeed makes it unconditionally valid at level $\alpha'$:
            \begin{align*}
                \mathbb{E}^{\mathbb{P}}\left[\frac{\phi_2}{\alpha'}\right]
                    &= \mathbb{E}^{\mathbb{P}}\left[\mathbb{E}^{\mathbb{P}}\left[\frac{\phi_2}{\alpha'} \middle| \mathcal{F}_1 \right]\right]
                    \leq \mathbb{E}^{\mathbb{P}}\left[\frac{\phi_1}{\alpha}\right]
                    \leq \frac{1}{\alpha} \mathbb{E}^{\mathbb{P}}[\phi_1] 
                    \leq 1, \textnormal{ for every } \mathbb{P} \in H.
            \end{align*}

        \added{
        \subsection{Pointwise optional continuation}\label{sec:pointwise_continuation}
            In the preceding sections, we use the outcome of a single test $\phi_1$ for optional continuation.            
            Recall that from Proposition \ref{prp:e_to_essinf_e} we may represent a test $\phi_1^H$ for a composite hypothesis $H$ as the infimum $\essinf_{\mathbb{P} \in H} \phi_1^{\mathbb{P}}$ of a family of tests $(\phi_1^{\mathbb{P}})_{\mathbb{P} \in H}$.
            The insight in this section is that for the purpose of optional continuation it seems generally better to work with such a family $(\phi_1^{\mathbb{P}})_{\mathbb{P} \in H}$ than working directly with $\phi_1^H$.
            
            The idea is to use optional continuation for each test separately:
            \begin{align}\label{ineq:pointwise_continuation}
                \mathbb{E}^{\mathbb{P}}[\phi_2^{\mathbb{P}} \mid \mathcal{F}_1] \leq \phi_1^{\mathbb{P}}.
            \end{align}
            In Theorem \ref{thm:pointwise_continuation}, we show that this still results in a valid test $\phi_2^H := \essinf_{\mathbb{P} \in H} \phi_2^{\mathbb{P}}$.
            Moreover, this approach dominates using $\phi_1^H \equiv \essinf_{\mathbb{P} \in H} \phi_1^{\mathbb{P}}$ for $H$ as a conditional significance level for $\phi_2^H$, since \eqref{ineq:pointwise_continuation} implies $\essinf_{\mathbb{P} \in H} \mathbb{E}^{\mathbb{P}}[\phi_2^\mathbb{P} \mid \mathcal{F}_1] \leq \essinf_{\mathbb{P} \in H} \phi_1^\mathbb{P} \equiv \phi_1^{H}$, which in turn implies that $\phi_2^H$ is conditionally valid at conditional level $\phi_1^H$ since
            \begin{align*}
                \mathbb{E}^{\mathbb{P}}[\phi_2^H \mid \mathcal{F}_1]
                    \equiv \mathbb{E}^{\mathbb{P}}[\essinf_{\mathbb{P}' \in H} \phi_2^{\mathbb{P}'} \mid \mathcal{F}_1]
                    \leq \essinf_{\mathbb{P} \in H} \mathbb{E}^{\mathbb{P}}[\phi_2^\mathbb{P} \mid \mathcal{F}_1]
                    \leq \phi_1^{H}.
            \end{align*}
            But conditional validity of $\phi_2^H$ at conditional level $\phi_1^H$ clearly does not generally imply \eqref{ineq:pointwise_continuation}.

            A consequence is that for optional continuation one should perhaps report the entire family $(\phi_1^{\mathbb{P}})_{\mathbb{P} \in H}$ of test outcomes instead of just a single test outcome $\phi_1^H$ against $H$.
            
            \begin{thm}\label{thm:pointwise_continuation}
                Let $(\phi_1^{\mathbb{P}})_{\mathbb{P} \in H}$ be a family of tests, where each test $\phi_1^{\mathbb{P}}$ is valid for $\mathbb{P}$ at level $\alpha$.
                Moreover, let $(\phi_2^{\mathbb{P}})_{\mathbb{P} \in H}$ be a family of tests, where each test $\phi_2^{\mathbb{P}}$ is conditionally valid for $\mathbb{P}$ at conditional significance level $\phi_1^{\mathbb{P}}$, given $\mathcal{F}_1$.
                Then the test $\phi_2^H = \essinf_{\mathbb{P} \in H} \phi_2^{\mathbb{P}}$ is valid for $H$ at level $\alpha$.
            \end{thm}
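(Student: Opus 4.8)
The plan is to establish the size bound $\mathbb{E}^{\mathbb{P}}[\phi_2^H] \le \alpha$ for each fixed $\mathbb{P} \in H$ and then take the supremum over $H$. This is essentially the four-line computation displayed just above the theorem statement, and the only genuine work is to make precise the first step — replacing $\phi_2^H$ by $\phi_2^{\mathbb{P}}$ — in the presence of the essential infimum.

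First I would fix an arbitrary $\mathbb{P} \in H$. By the defining property of the essential infimum (which exists by the standing local-absolute-continuity assumption recalled in Remark \ref{rmk:essinf}), the map $\phi_2^H = \essinf_{\mathbb{P}' \in H}\phi_2^{\mathbb{P}'}$ is a measurable $[0,1]$-valued function — so it is indeed a test — and it satisfies $\phi_2^H \le \phi_2^{\mathbb{P}}$ almost surely with respect to the dominating measure, hence also $\mathbb{P}$-almost surely since $\mathbb{P}$ is dominated. Thus the only remaining point is the size bound.

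Next I would integrate the domination $\phi_2^H \le \phi_2^{\mathbb{P}}$ against $\mathbb{P}$, insert the conditional expectation given $\mathcal{F}_1$ via the tower property, apply the hypothesis $\mathbb{E}^{\mathbb{P}}[\phi_2^{\mathbb{P}} \mid \mathcal{F}_1] \le \phi_1^{\mathbb{P}}$, and finish with the unconditional validity $\mathbb{E}^{\mathbb{P}}[\phi_1^{\mathbb{P}}] \le \alpha$:
\[
\mathbb{E}^{\mathbb{P}}[\phi_2^H] \le \mathbb{E}^{\mathbb{P}}[\phi_2^{\mathbb{P}}] = \mathbb{E}^{\mathbb{P}}\bigl[\mathbb{E}^{\mathbb{P}}[\phi_2^{\mathbb{P}} \mid \mathcal{F}_1]\bigr] \le \mathbb{E}^{\mathbb{P}}[\phi_1^{\mathbb{P}}] \le \alpha.
\]
Since $\mathbb{P} \in H$ was arbitrary, taking the supremum yields $\sup_{\mathbb{P} \in H}\mathbb{E}^{\mathbb{P}}[\phi_2^H] \le \alpha$, which is exactly validity of $\phi_2^H$ for $H$ at level $\alpha$.

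There is no real obstacle here: the argument is a direct application of monotonicity and the tower property, mirroring the proof of Theorem \ref{thm:seq_significance_level_two} carried out \emph{pointwise} in $\mathbb{P}$. The only mild subtlety — and the one place I would be careful — is that the essential infimum is taken with respect to a fixed dominating measure, so one must note that the almost-sure domination $\phi_2^H \le \phi_2^{\mathbb{P}}$ transfers to $\mathbb{P}$-almost-sure domination before integrating against $\mathbb{P}$; this is immediate from local absolute continuity. Note also that we never need $\phi_1^H \equiv \essinf_{\mathbb{P} \in H}\phi_1^{\mathbb{P}}$ to be valid, nor even to appear — the improvement over ordinary optional continuation emphasized in the paragraph preceding the theorem is a separate observation that is not needed for the proof.
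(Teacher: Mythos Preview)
Your proof is correct and follows essentially the same approach as the paper: the paper's proof is the single displayed chain $\sup_{\mathbb{P}\in H}\mathbb{E}^{\mathbb{P}}[\essinf_{\mathbb{P}'}\phi_2^{\mathbb{P}'}] \le \sup_{\mathbb{P}}\mathbb{E}^{\mathbb{P}}[\phi_2^{\mathbb{P}}] = \sup_{\mathbb{P}}\mathbb{E}^{\mathbb{P}}[\mathbb{E}^{\mathbb{P}}[\phi_2^{\mathbb{P}}\mid\mathcal{F}_1]] \le \sup_{\mathbb{P}}\mathbb{E}^{\mathbb{P}}[\phi_1^{\mathbb{P}}] \le \alpha$, which is exactly your pointwise argument with the supremum carried through from the start. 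Your added care about the essential infimum transferring to $\mathbb{P}$-a.s.\ domination is a welcome clarification that the paper leaves implicit.
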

            }
            
    \section{Connection to e-values}\label{sec:e-values}
        \added{
        \subsection{E-values and tests}
            Our favourite way to introduce the e-value is as a \emph{test on a different scale}.
            In particular, without loss of generality, we could choose to define a level $\alpha > 0$ test as a map $\varepsilon : \mathcal{X} \to [0, 1/\alpha]$, rescaling by a factor $1/\alpha$ compared to the classical definition $\phi : \mathcal{X} \to [0, 1]$.
            Because of the rescaling, the validity guarantee now becomes
            \begin{align*}
                \sup_{\mathbb{P} \in H} \mathbb{E}^{\mathbb{P}}\left[\varepsilon\right] \leq 1,
            \end{align*}
            regardless of the value of $\alpha$.
            A test on this different scale is known as a (bounded) \emph{e-value}.
            Plugging in $\alpha = 0$ recovers the usual definition of the e-value $\varepsilon : \mathcal{X} \to [0, \infty]$.
            From this perspective, recently proposed in \citet{koning2024continuoustesting}, e-values are just tests in disguise, masked by a cosmetic rescaling.
            As noted in Remark \ref{rmk:extending_to_e-values} and Remark \ref{rmk:multiply_e-values}, one consequence is that most of our results easily extend to e-values.

            As the lack of an upper bound is of little practical relevance, \citet{koning2024continuoustesting} argues that the true innovation of the e-value is an appreciation of the interior of the codomain $[0, 1]$ (now $[0, 1/\alpha]$).
            While this interior is classically viewed as a (much disliked) instruction to randomize, the e-value literature (implicitly) directly interprets this as `evidence'.
            Our work delivers a new interpretation, showing that test outcomes in $(0, 1)$ can be used as a conditional significance level.

            The appreciation of this interior is visible in the power objectives considered in the e-value literature, which will be important for the results in the subsequent sections.
            To understand this, let us start with classical Neyman-Pearson optimal testing, which concerns maximizing the power $ \mathbb{E}^{\mathbb{Q}}[\varepsilon]$ over $[0, 1/\alpha]$-valued valid e-values against some alternative $\mathbb{Q}$.
            An optimizer of this problem can also be viewed as optimizing
            \begin{align*}
                \mathbb{E}^{\mathbb{Q}}\left[U(\varepsilon)\right]
            \end{align*}
            over $[0, \infty]$-valued valid e-values, for the `Neyman-Pearson utility function' $U(x) = x \wedge 1/\alpha$ \citep{koning2024continuoustesting}.
            This problem is optimized by the likelihood ratio test, which is usually $\{0, 1/\alpha\}$-valued, offering either no rejection or a rejection at level $\alpha$.
            Other concave utility functions $U : [0, \infty] \to [-\infty, +\infty]$, designed to express the utility associated with different outcomes, generally yield e-values that take value in a continuum.

            One choice of the utility function is the log-utility $U = \log$, which has grown to become the (near-universal) default in the e-value literature \citep{grunwald2024safe, larsson2024numeraire} and will be the primary focus of the following sections.
            For a simple hypothesis, $H = \{\mathbb{P}\}$ and $\alpha = 0$, the log-utility-optimal e-value is simply the likelihood ratio $d\mathbb{Q} / d\mathbb{P}$.
            For mutually absolutely continuous $\mathbb{P}$ and $\mathbb{Q}$ and upper-semicontinuous, concave and non-decreasing utility functions $U$, \citet{koning2024continuoustesting} shows that an expected-utility-optimal e-value $\varepsilon^*$ satisfies
            \begin{align}\label{eq:U-optimal_e-value}
                \lambda \frac{d\mathbb{P}}{d\mathbb{Q}} \in \partial U(\varepsilon^*),
            \end{align}
            for some normalization constant $\lambda \geq 0$, where $\partial U$ denotes the super-differential of $U$.
            This means that utility-optimal e-values are generally non-decreasing as a function of the likelihood ratio $d\mathbb{Q}/d\mathbb{P}$.

        \subsection{Inducing from log-utility-optimal e-values}\label{sec:log_induction}
            In Theorem \ref{thm:log_invariance}, we apply our machinery from Section \ref{sec:inducing_sequential_test} to log-utility-optimal e-values.
            This reveals a remarkable invariance property of log-utility-optimality: if $\varepsilon$ is log-utility-optimal for $\mathbb{P}$ against $\mathbb{Q}$, then the $\mathcal{F}_n$-conditional e-value $\varepsilon_{n} = \mathbb{E}^{\mathbb{P}}[\varepsilon \mid \mathcal{F}_n]$ is automatically log-utility-optimal for the same problem restricted to the sub-$\sigma$-algebra $\mathcal{F}_n$.
            The resulting log-utility-optimal sequential test (`e-process') $(\varepsilon_n) = (\textnormal{LR}_n)$ can be viewed as the `unbounded SPRT' on the e-value scale for $\alpha = 0$: $\textnormal{LR}_n \wedge 1/0 = \textnormal{LR}_n$, taking $1/0 = +\infty$.
            
            In the result, we use $\mathbb{P}_{\mathcal{F}_n}$ and $\mathbb{Q}_{\mathcal{F}_n}$ to denote the probabilities $\mathbb{P}$ and $\mathbb{Q}$ restricted to the sub-$\sigma$-algebra $\mathcal{F}_n$.
            The proof relies on the (well-known) property that the $\mathcal{F}_n$-restricted likelihood ratio $\textnormal{LR}_n$ equals the conditional expectation of the likelihood ratio $\textnormal{LR}$.

            \begin{thm}[Conditioning-invariance log-utility-optimal e-values]\label{thm:log_invariance}
                Let $\mathbb{P} \gg \mathbb{Q}$.
                If $\varepsilon$ is log-utility-optimal for $\mathbb{P}$ against $\mathbb{Q}$ then $\varepsilon_n$ is log-utility-optimal for $\mathbb{P}_{\mathcal{F}_n}$ against $\mathbb{Q}_{\mathcal{F}_n}$.
            \end{thm}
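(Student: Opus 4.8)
The plan is to reduce the statement to two facts: first, that for a simple null the log-utility-optimal e-value is (almost surely) the likelihood ratio; and second, the classical martingale property of likelihood ratios, namely that restricting to a sub-$\sigma$-algebra turns the likelihood ratio into its conditional expectation. The theorem then drops out by composing the two.

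First I would record the characterization. Write $\textnormal{LR} = d\mathbb{Q}/d\mathbb{P}$, which is finite $\mathbb{P}$-a.s.\ since $\mathbb{P} \gg \mathbb{Q}$, and note $\textnormal{LR} > 0$ $\mathbb{Q}$-a.s.\ and $\mathbb{E}^{\mathbb{P}}[\textnormal{LR}] = 1$. For any valid e-value $\varepsilon$ (so $\varepsilon \geq 0$, $\mathbb{E}^{\mathbb{P}}[\varepsilon] \leq 1$), the change of measure $\mathbb{E}^{\mathbb{Q}}[g] = \mathbb{E}^{\mathbb{P}}[g\,\textnormal{LR}]$ and concavity of $\log$ give
\[
\mathbb{E}^{\mathbb{Q}}[\log \varepsilon] - \mathbb{E}^{\mathbb{Q}}[\log \textnormal{LR}]
  = \mathbb{E}^{\mathbb{Q}}\!\left[\log \tfrac{\varepsilon}{\textnormal{LR}}\right]
  \leq \log \mathbb{E}^{\mathbb{Q}}\!\left[\tfrac{\varepsilon}{\textnormal{LR}}\right]
  = \log \mathbb{E}^{\mathbb{P}}[\varepsilon] \leq 0,
\]
so $\textnormal{LR}$ is log-utility-optimal (the simple-hypothesis fact already recorded earlier in this section). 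For the converse, equality throughout forces $\mathbb{E}^{\mathbb{P}}[\varepsilon] = 1$ and, by strict concavity of $\log$, $\varepsilon = \textnormal{LR}$ $\mathbb{Q}$-a.s.; since $\mathbb{P}$ and $\mathbb{Q}$ are mutually absolutely continuous on $\{\textnormal{LR} > 0\}$ this extends to $\varepsilon = \textnormal{LR}$ $\mathbb{P}$-a.s.\ there, and then $\mathbb{E}^{\mathbb{P}}[\varepsilon] = 1 = \mathbb{E}^{\mathbb{P}}[\textnormal{LR}]$ forces $\varepsilon = 0 = \textnormal{LR}$ $\mathbb{P}$-a.s.\ on $\{\textnormal{LR} = 0\}$ as well. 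Hence $\varepsilon$ is log-utility-optimal for $\mathbb{P}$ against $\mathbb{Q}$ if and only if $\varepsilon = \textnormal{LR}$ $\mathbb{P}$-a.s.

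Next I would invoke the martingale property of likelihood ratios. From $\mathbb{P} \gg \mathbb{Q}$ we get $\mathbb{P}_{\mathcal{F}_n} \gg \mathbb{Q}_{\mathcal{F}_n}$, and for the restricted likelihood ratio $\textnormal{LR}_n := d\mathbb{Q}_{\mathcal{F}_n}/d\mathbb{P}_{\mathcal{F}_n}$ one checks $\textnormal{LR}_n = \mathbb{E}^{\mathbb{P}}[\textnormal{LR} \mid \mathcal{F}_n]$ $\mathbb{P}$-a.s.: both sides are $\mathcal{F}_n$-measurable and integrate to $\mathbb{Q}(A)$ against $\mathbb{P}$ for every $A \in \mathcal{F}_n$. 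Combining with the first step, if $\varepsilon$ is log-utility-optimal for $\mathbb{P}$ against $\mathbb{Q}$ then
\[
\varepsilon_n = \mathbb{E}^{\mathbb{P}}[\varepsilon \mid \mathcal{F}_n] = \mathbb{E}^{\mathbb{P}}[\textnormal{LR} \mid \mathcal{F}_n] = \textnormal{LR}_n = \frac{d\mathbb{Q}_{\mathcal{F}_n}}{d\mathbb{P}_{\mathcal{F}_n}}, \qquad \mathbb{P}\textnormal{-a.s.},
\]
and applying the characterization of the first step once more, now to the triple $(\mathcal{F}_n, \mathbb{P}_{\mathcal{F}_n}, \mathbb{Q}_{\mathcal{F}_n})$, shows $\varepsilon_n$ is log-utility-optimal for $\mathbb{P}_{\mathcal{F}_n}$ against $\mathbb{Q}_{\mathcal{F}_n}$.

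I expect the main obstacle to be bookkeeping rather than anything deep: making sure $\mathbb{E}^{\mathbb{Q}}[\log \textnormal{LR}]$ (the KL divergence) is meaningfully defined so that the displayed inequality is not an $\infty - \infty$ statement --- if it is $+\infty$ one notes every valid e-value then also has infinite log-utility, making the optimization degenerate, or one simply restricts to the finite-KL case --- and carefully tracking which null sets ($\mathbb{P}$, $\mathbb{Q}$, or their $\mathcal{F}_n$-restrictions) the a.s.\ qualifiers refer to. The martingale property itself is standard and can be cited.
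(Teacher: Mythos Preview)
Your proposal is correct and follows essentially the same route as the paper: identify the log-utility-optimal e-value with the likelihood ratio, then use that $\mathbb{E}^{\mathbb{P}}[\textnormal{LR}\mid\mathcal{F}_n]=d\mathbb{Q}_{\mathcal{F}_n}/d\mathbb{P}_{\mathcal{F}_n}$ so that $\varepsilon_n$ is again a likelihood ratio and hence log-optimal for the restricted problem. The only cosmetic difference is emphasis: you spell out the Gibbs/Jensen argument for the characterization (which the paper treats as already recorded in the surrounding text), while the paper spells out the Radon--Nikodym verification of the martingale identity (which you cite as standard).
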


            In Proposition \ref{prp:define_log}, we show that the log-utility function is the \emph{only} utility function that satisfies this property.
            The main idea behind the proof is to use the representation \eqref{eq:U-optimal_e-value} of \citet{koning2024continuoustesting} to show that the property requires Jensen's inequality to hold with equality for the map $g : x \mapsto (U')^{-1}(c/x)$, $c > 0$, so that $g$ must be affine on the support of the distribution.
            If the class of distributions for which we want the property to hold is sufficiently rich, this implies $g$ must be affine.
            This subsequently implies that the utility function is of the form $U(x) = a\log(x - b) + c$, for which the optimizer is a mixture of the log-utility-optimal e-value and 1.
            Applying $U'(0) = \infty$ forces $b = 0$.
            
            \begin{prp}\label{prp:define_log}
                Assume that $U$ is concave, strictly increasing and differentiable with continuous and strictly decreasing derivative $U'$ with $U'(0) = \lim_{x \to 0} U'(x) = \infty$ and $U'(\infty) = \lim_{x \to \infty} U'(x) = 0$.
                Suppose that for every pair $\mathbb{P} \gg \mathbb{Q}$ and underlying measure space, we have that if $\varepsilon$ is $U$-optimal for $\mathbb{P}$ against $\mathbb{Q}$ then $\varepsilon_n$ is $U$-optimal for $\mathbb{P}_{\mathcal{F}_n}$ against $\mathbb{Q}_{\mathcal{F}_n}$.
                Then $U = \log$ (up to affine transformations).
            \end{prp}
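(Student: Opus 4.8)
The plan is to turn $U$-optimality into an explicit formula via the first-order condition \eqref{eq:U-optimal_e-value}, substitute that formula into the conditioning-invariance hypothesis to obtain a functional equation, and then argue that this equation forces the relevant transform to be affine, which in turn pins $U$ down as $\log$.

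First, I would exploit the regularity assumed on $U$. Since $U'$ is continuous and strictly decreasing with $U'(0)=\infty$ and $U'(\infty)=0$, it is a decreasing bijection of $(0,\infty)$, so $(U')^{-1}$ is a continuous strictly decreasing bijection of $(0,\infty)$ with $(U')^{-1}(\infty)=0$. For a mutually absolutely continuous pair $\mathbb{P},\mathbb{Q}$ with likelihood ratio $L := d\mathbb{Q}/d\mathbb{P} \in (0,\infty)$, the condition \eqref{eq:U-optimal_e-value} reduces (by differentiability) to $U'(\varepsilon^*) = \lambda/L$, so any $U$-optimal e-value has the form
\begin{align*}
    \varepsilon^* = g_\lambda(L), \qquad g_c(x) := (U')^{-1}(c/x),
\end{align*}
with $\lambda > 0$ fixed by the binding constraint $\mathbb{E}^{\mathbb{P}}[\varepsilon^*] = 1$ (strict concavity of $U$ makes the optimizer unique). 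It is convenient to note the scaling $g_c(x) = \gamma(x/c)$, where $\gamma := g_1$ is a continuous increasing bijection of $(0,\infty)$ with $\gamma(0^+) = 0$. Now, the $\mathcal{F}_n$-restricted likelihood ratio equals $L_n = \mathbb{E}^{\mathbb{P}}[L \mid \mathcal{F}_n]$, and applying \eqref{eq:U-optimal_e-value} to the restricted pair $(\mathbb{P}_{\mathcal{F}_n}, \mathbb{Q}_{\mathcal{F}_n})$ shows its $U$-optimal e-value is $g_{\lambda_n}(L_n)$ for some $\lambda_n > 0$. Since $\varepsilon_n := \mathbb{E}^{\mathbb{P}}[\varepsilon^* \mid \mathcal{F}_n]$ has $\mathbb{P}$-expectation $1$ by the tower rule and is $U$-optimal for the restricted problem by hypothesis, I obtain
\begin{align*}
    \mathbb{E}^{\mathbb{P}}\!\left[ g_\lambda(L) \mid \mathcal{F}_n \right] = g_{\lambda_n}\!\left( \mathbb{E}^{\mathbb{P}}[L \mid \mathcal{F}_n] \right),
\end{align*}
for every such $\mathbb{P}, \mathbb{Q}$ and every sub-$\sigma$-algebra $\mathcal{F}_n$.

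Next, I would extract affinity of $\gamma$ by testing this identity on three-point spaces. Take $\mathcal{X} = \{1,2,3\}$ with $\mathcal{F}_n = \sigma(\{1,2\},\{3\})$, $\mathbb{P}$-masses $p_1,p_2,p_3 > 0$, and $L$ equal to $a,b,m > 0$ on the three atoms, constrained by $p_1 a + p_2 b + p_3 m = 1$ so that $\mathbb{Q}$ is a probability measure. On the $\mathcal{F}_n$-atom $\{3\}$ the conditioning is trivial, so the identity collapses to $g_\lambda(m) = g_{\lambda_n}(m)$; since $g_c(m)=\gamma(m/c)$, $m>0$, and $\gamma$ is injective, this forces $\lambda_n = \lambda$. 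On $\{1,2\}$, writing $\rho := p_1/(p_1+p_2)$, $u := a/\lambda$, $v := b/\lambda$, the identity becomes
\begin{align*}
    \rho\, \gamma(u) + (1-\rho)\, \gamma(v) = \gamma\!\left( \rho u + (1-\rho) v \right).
\end{align*}
A short intermediate-value argument — parametrize $p_1 = \rho t$, $p_2 = (1-\rho)t$, $p_3 = 1-t$, $a = \lambda u$, $b = \lambda v$, $m = (1 - t\lambda(\rho u + (1-\rho) v))/(1-t)$, and for small $t$ solve the normalization constraint for $\lambda$ (whose left-hand side runs from $\infty$ down past $1$) — shows every $u, v > 0$ and $\rho \in (0,1)$ is attainable. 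Hence $\gamma$ satisfies the displayed identity for arbitrary two-point weights, which forces $\gamma$ to be affine, and $\gamma(0^+) = 0$ then gives $\gamma(x) = p x$ with $p > 0$.

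Finally, $\gamma(x) = (U')^{-1}(1/x) = p x$ unwinds to $U'(y) = p/y$, hence $U(y) = p \log y + c$, which is $\log$ up to an affine transformation. The step I expect to be the main obstacle is reconciling the two normalization constants: the Lagrange multipliers $\lambda$ and $\lambda_n$ of the original and restricted problems need not coincide a priori, and without $\lambda_n = \lambda$ one is left with a single equation relating two unknown functions; evaluating the functional equation on an $\mathcal{F}_n$-atom resolves this, but one must still check that the finite configurations used are genuinely rich enough to realize arbitrary $(u,v,\rho)$ under the constraint $\mathbb{E}^{\mathbb{P}}[L] = 1$, and that \eqref{eq:U-optimal_e-value} does pin $\varepsilon^*$ down uniquely (which follows from strict concavity of $U$).
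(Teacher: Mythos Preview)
Your proof is correct and follows the same overall strategy as the paper: use the first-order condition to write $\varepsilon^* = g_\lambda(L)$, derive the functional equation $\mathbb{E}^{\mathbb{P}}[g_\lambda(L)\mid\mathcal{F}_n] = g_{\lambda_n}(\mathbb{E}^{\mathbb{P}}[L\mid\mathcal{F}_n])$, test it on a three-atom space with $\mathcal{F}_n$ lumping two atoms, read off $\lambda_n=\lambda$ from the singleton atom, and extract affinity of $g_\lambda$ from the merged atom.

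The one substantive difference is in how affinity is concluded. The paper restricts to $\mathbb{P}(A)=\mathbb{P}(B)$, obtaining only the midpoint Jensen equation $g_\lambda\!\left(\tfrac{x+y}{2}\right)=\tfrac12(g_\lambda(x)+g_\lambda(y))$, and then invokes a theorem from Kuczma (midpoint-affine plus continuity implies affine). You instead allow a general weight $\rho=p_1/(p_1+p_2)$ and argue, via an intermediate-value argument on the Lagrange multiplier, that every triple $(u,v,\rho)$ is attainable, so the full convex-combination identity holds and affinity is immediate. Your route is more self-contained (no external functional-equation theorem), at the price of a slightly more delicate attainability check; the paper's route keeps the construction simpler but outsources the passage from midpoint-affine to affine. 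Your use of $\gamma(0^+)=0$ to kill the constant term is also a touch cleaner than the paper's argument with the shift parameter $b$.
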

            }
        \subsection{Inducing the SPRT from a Neyman-Pearson test}\label{sec:log-opt_z-test}
            Theorem~\ref{thm:LR_sequential_z-test} shows that under an i.i.d. assumption and mild regularity conditions, the unbounded SPRT (log-optimal e-process / test-process) $(\textnormal{LR}_n)$ is the sequential test induced by the Neyman-Pearson optimal test based on $N$ observations for small $\alpha$ as $N \to \infty$, on the e-value scale $\varepsilon_{n, \alpha} = \phi_{n, \alpha} / \alpha$.

            An interpretation of this result is that using the unbounded SPRT means we are implicitly aiming for a terminal test with high power at large $N$ and small $\alpha$.
            The result is illustrated in the Gaussian location setting in Figure~\ref{fig:LR_z-test}, where we see the sequential tests indeed nearly overlap, especially for $n \ll N$, even though $N$ is not so large here.
            
            \begin{thm}\label{thm:LR_sequential_z-test}
                Consider two distributions $\mathbb{P}$ and $\mathbb{Q}$ that are mutually absolutely continuous, and assume $\log \frac{d\mathbb{Q}}{d\mathbb{P}}$ is atomless.
                We denote the level $\alpha_N$ NP-test based on $N$ i.i.d. observations with $\phi_{N,\alpha_N}$, and its conditional expectation given $n$ observations with $\phi'_{n,\alpha_N}$.
                Choose
                    $\alpha_N = \exp\{-N \times D(\mathbb{Q}\|\mathbb{P})\}$ where $D(\cdot\|\cdot)$ denotes the KL divergence.
                
                Assume that there exists an $\epsilon > 0$ such that the $(1+\epsilon)$-order Rényi divergence $D_{1+\epsilon}(\mathbb Q\|\mathbb P)$ is finite, and that $\log\LR_1$ is not lattice-valued under $\mathbb{P}$. Then,
                \begin{align*}
                    \lim_{N\to\infty} \varepsilon_{n, \alpha_N}' \equiv \lim_{N\to\infty}\frac{\phi'_{n,\alpha_N}}{\alpha_N}= \mathrm{LR}_n,\qquad \text{$\mathbb{P}$-a.s., for all } n\in\mathbb{N}.
                \end{align*}
    
            \end{thm}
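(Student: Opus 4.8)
The plan is to work entirely on the log scale, writing $S_n := \log \mathrm{LR}_n = \sum_{i=1}^n \log \mathrm{LR}_1^{(i)}$ for the random walk of i.i.d. log-likelihood increments under $\mathbb P$ (with mean $-D(\mathbb P\|\mathbb Q) < 0$ and, by a change of measure, mean $D(\mathbb Q\|\mathbb P) > 0$ under $\mathbb Q$). The NP test at level $\alpha_N$ based on $N$ observations is $\phi_{N,\alpha_N} = \mathbb I\{S_N > c_N\}$ for a critical value $c_N$ determined by $\mathbb P(S_N > c_N) = \alpha_N$ (using that $\log \mathrm{LR}_1$, hence $S_N$, is atomless so that no randomization is needed). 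The first key step is to pin down the asymptotics of $c_N$: since $\alpha_N = e^{-N D(\mathbb Q\|\mathbb P)}$ is exactly the Chernoff/large-deviation rate for $S_N$ to exceed the mean of the $\mathbb Q$-walk, Cramér's theorem together with the exact Bahadur–Rao sharp large deviation asymptotics (which is where the non-lattice hypothesis on $\log \mathrm{LR}_1$ and the finiteness of a Rényi divergence $D_{1+\epsilon}(\mathbb Q\|\mathbb P)$ — i.e. finiteness of the moment generating function of the increments in a neighbourhood of the tilting parameter — enter) gives $c_N = N\,D(\mathbb Q\|\mathbb P) + o(N)$, and in fact $c_N = N D(\mathbb Q\|\mathbb P) + O(\log N)$. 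The upshot I want is
\[
    c_N - N\,D(\mathbb Q\|\mathbb P) = O(\log N), \qquad \text{equivalently} \qquad c_N = -\log\alpha_N + O(\log N).
\]

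The second step is to compute the induced sequential test at a fixed time $n$. By Theorem~\ref{thm:simple}, $\phi'_{n,\alpha_N} = \mathbb P(S_N > c_N \mid \mathcal F_n) = \mathbb P\big(S_N - S_n > c_N - S_n \mid \mathcal F_n\big)$, and since $S_N - S_n$ is a sum of $N-n$ i.i.d. increments independent of $\mathcal F_n$, this is $G_{N-n}(c_N - S_n)$ where $G_m(t) := \mathbb P(\sum_{i=1}^m \log\mathrm{LR}_1^{(i)} > t)$. Passing to the e-value scale, $\varepsilon'_{n,\alpha_N} = \phi'_{n,\alpha_N}/\alpha_N = e^{-\log\alpha_N}\, G_{N-n}(c_N - S_n)$. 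Plugging in $-\log\alpha_N = N D(\mathbb Q\|\mathbb P)$ and $c_N - S_n = N D(\mathbb Q\|\mathbb P) - S_n + O(\log N)$, the claim $\varepsilon'_{n,\alpha_N} \to e^{S_n} = \mathrm{LR}_n$, $\mathbb P$-a.s., reduces to showing that for each fixed real $s$ (playing the role of the realized $S_n$, which is finite $\mathbb P$-a.s.),
\[
    e^{N D(\mathbb Q\|\mathbb P)}\, G_{N-n}\big(N D(\mathbb Q\|\mathbb P) - s + O(\log N)\big) \;\longrightarrow\; e^{s}.
\]
This is again a sharp large-deviations statement: $G_m(t)$ with $t \approx m\,D(\mathbb Q\|\mathbb P)$ (note $N - n \sim N$) is precisely the Bahadur–Rao regime at the tilt $\theta = 1$, at which the tilted measure is $\mathbb Q$ itself, so the exponential rate is again $D(\mathbb Q\|\mathbb P)$ and the tilting at $\theta=1$ converts the additive shift $-s$ into the multiplicative factor $e^{s}$ (the $\theta=1$ Esscher factor), while the $O(\log N)$ perturbation in $t$ contributes only a subexponential — indeed $e^{O(\log N)}$ polynomial — correction that is dominated by, and must be reconciled with, the $m^{-1/2}$ Bahadur–Rao prefactor. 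The cleanest way to organize this is to use the exact tilted-measure representation $G_m(t) = e^{-t}\,\mathbb E^{\mathbb Q}\big[e^{-(W_m - t)}\mathbb I\{W_m > t\}\big]$ where $W_m := \sum_{i\le m}\log\mathrm{LR}_1^{(i)}$, so after multiplying by $e^{NT D}$ (abbreviating $D := D(\mathbb Q\|\mathbb P)$) and writing $t = (N-n)D + \delta_N$ with $\delta_N := nD - s + O(\log N)$, one gets
\[
    e^{ND}\,G_{N-n}(t) \;=\; e^{ND - t}\,\mathbb E^{\mathbb Q}\!\left[e^{-(W_{N-n}-t)}\mathbb I\{W_{N-n}>t\}\right] \;=\; e^{\,s - O(\log N)}\,\mathbb E^{\mathbb Q}\!\left[e^{-(W_{N-n}-t)}\mathbb I\{W_{N-n}>t\}\right],
\]
and by the CLT under $\mathbb Q$ (the $\mathbb Q$-mean of $W_{N-n}$ is $(N-n)D$, matching the leading term of $t$, and the overshoot $W_{N-n} - t$ is $O(1)$ on a $\sqrt{N}$-window near the mean), renewal theory / Stone's local limit theorem for the non-lattice walk shows $\mathbb E^{\mathbb Q}[e^{-(W_{N-n}-t)}\mathbb I\{W_{N-n}>t\}] \to \rho$ for an explicit renewal constant $\rho$; choosing $c_N$ with exactly the right $O(\log N)$ term (from Bahadur–Rao applied to $G_N$ to make $\mathbb P(S_N > c_N) = \alpha_N$ hold) forces $e^{-O(\log N)}\rho \to 1$, giving the limit $e^{s}$.

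The main obstacle is bookkeeping the interplay between the three scales: the sharp large-deviation asymptotics fixing $c_N$ up to $O(\log N)$, the Bahadur–Rao/renewal asymptotics of $G_{N-n}$ at the conjugate point $\theta=1$, and the requirement that these two $O(\log N)$ (really $O(\log\log N)$ after the $m^{-1/2}$ prefactor is extracted) corrections cancel so the limit is exactly $\mathrm{LR}_n$ and not $\mathrm{LR}_n$ times a stray constant or a vanishing/exploding polynomial factor. Concretely, Bahadur–Rao gives $\mathbb P(S_N > c_N) \sim C N^{-1/2} e^{-\psi^*(c_N/N) N}$ with $\psi^*$ the rate function; setting this equal to $e^{-ND}$ and noting $\psi^*(D) = D$ with $\psi^{*\prime}(D) = 1$, a Taylor expansion of $\psi^*$ around $D$ yields $c_N/N - D \sim \tfrac{1}{N}(\tfrac12\log N + \log C)$, i.e. $c_N = ND + \tfrac12\log N + \log C + o(1)$; the same constants $C$ (depending only on the increment law) reappear in the Bahadur–Rao expansion of $G_{N-n}$ because $N - n \sim N$, and the algebra confirms the cancellation. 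The finiteness of $D_{1+\epsilon}(\mathbb Q\|\mathbb P)$ is exactly the integrability needed to run Bahadur–Rao / the local limit theorem at the tilt $\theta = 1+\epsilon' $ for small $\epsilon'$ (the mgf of $\log\mathrm{LR}_1$ under $\mathbb P$ is finite on $[0, 1+\epsilon]$), and the non-lattice hypothesis is what licenses the $m^{-1/2}$ (rather than lattice-spacing) prefactor and the continuous renewal limit. I would present the $O(\log N)$ reconciliation as a lemma, cite Bahadur–Rao and Stone's local limit theorem for the sharp asymptotics, and relegate the tilting computation $\psi^*(D) = D$, $(\psi^*)'(D) = 1$ to a short remark since it is the standard fact that the NP exponent $\alpha_N = e^{-ND(\mathbb Q\|\mathbb P)}$ sits exactly at the point where Stein's lemma is tight.
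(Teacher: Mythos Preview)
Your approach is essentially the same as the paper's: pin down the critical value to $o(1)$ precision via sharp large-deviation asymptotics, then apply the same asymptotics a second time to $\mathbb P(S_{N-n} \ge c_N - s)$ and watch the $\tfrac12\log N$ and constant prefactors cancel. The paper cites Theorem~2 of Petrov (1965), which is the same result you call Bahadur--Rao, and applies it twice directly.

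Two concrete errors to fix. First, the sign on the $\log N$ correction: setting the Bahadur--Rao expansion $\mathbb P(S_N>c_N)\sim C N^{-1/2}e^{-N\psi^*(c_N/N)}$ equal to $e^{-ND}$ and expanding $\psi^*$ about $D$ with $(\psi^*)'(D)=1$ gives $c_N = ND - \tfrac12\log N + \log C + o(1)$, not $+\tfrac12\log N$; the paper gets exactly $t_N = ND - \tfrac12\log N - \log(\sigma(1)\sqrt{2\pi}) + o(1)$. Second, and more seriously, the claim that $\mathbb E^{\mathbb Q}\!\big[e^{-(W_{N-n}-t)}\mathbb I\{W_{N-n}>t\}\big]\to\rho$ for a renewal constant $\rho$ is wrong in this regime. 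Here $t-(N-n)D = nD - s - \tfrac12\log N + O(1) = o(\sqrt{N})$, so $t$ sits in the CLT window of $W_{N-n}$ under $\mathbb Q$; the local limit theorem then gives that this expectation is $\sim 1/(\sigma(1)\sqrt{2\pi(N-n)})$, i.e.\ of order $N^{-1/2}$, not a constant. That is precisely the factor that cancels the $e^{+\frac12\log N}$ coming from $e^{ND-t}$, so your ``$e^{-O(\log N)}\rho\to 1$'' narrative is internally inconsistent.

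The detour through the tilted-measure identity is where this confusion enters and is unnecessary: applying Bahadur--Rao/Petrov directly to $\mathbb P(S_{N-n}\ge c_N - s)$ at threshold $c_N - s = (N-n)D + o(N)$, exactly as in Step~1, yields $(1+o(1))\,e^{s-ND}$ in one line and makes the cancellation automatic.
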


            \begin{figure}
                \centering
                \includegraphics[width=9.5cm]{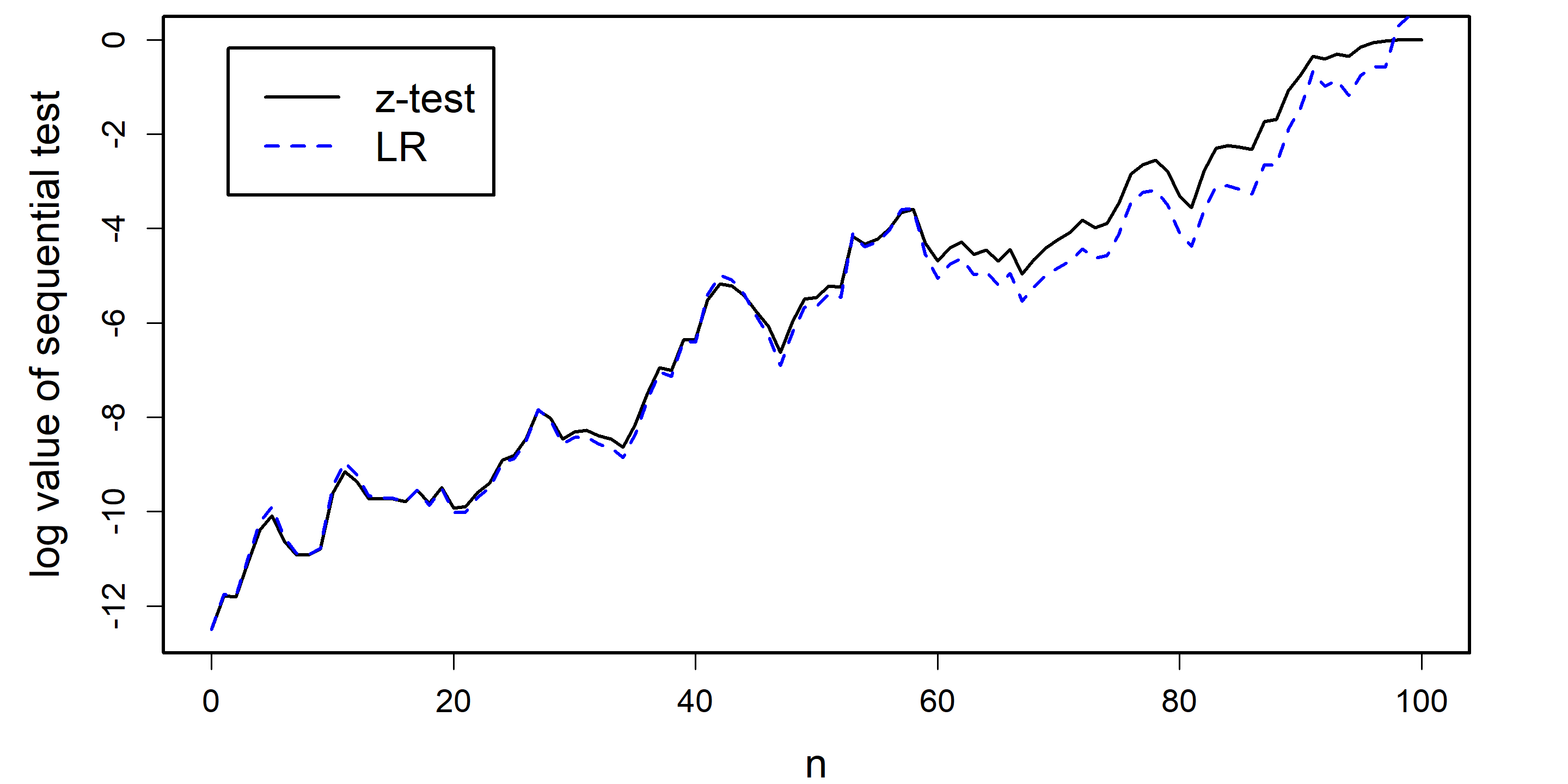}
                \caption{Illustration of the sequential $z$-test at $N = 100$ and the likelihood ratio process over 100 observations sampled from $\mathcal{N}(0.5, 1)$. 
                The $z$-test is executed at significance level $\alpha=\exp\{-N\frac{\mu^2}{2\sigma^2}\}=\exp\{-12.5\}\approx .0000037$, for which Theorem \ref{thm:LR_sequential_z-test} predicts that two sequential tests coincide as $N \to \infty$.}
                \label{fig:LR_z-test}
            \end{figure}
        
            \begin{rmk}[Link to \citet{breiman1961optimal}]
                Our Theorem~\ref{thm:LR_sequential_z-test} is related to a result by \cite{breiman1961optimal}.
                In particular, he shows in a binary-data context that the power of the SPRT $(\alpha \textnormal{LR}_n \wedge 1)_{n \in \mathbb{N}}$ at the final time $N$ asymptotically matches the power of the most powerful test at time $N$, uniformly in $\alpha$.
                Our result shows something much stronger: for a particular choice of $\alpha$ the entire test processes themselves coincide as $N \to \infty$, not just their power at time $N$.
            \end{rmk}
            
    \section{Discussion}
        \added{
        We want to start this discussion by stressing that we do not advocate for everyone to use sequential tests induced from classical Neyman-Pearson-style tests.
        Indeed, such tests satisfy two undesirable properties.
        First, they are measured at some finite time $N$, so that they do not naturally continue beyond this time.
        Second, such tests generally hit $0$ with positive probability at time $N$, discarding the option to continue beyond this time.
        This means we can only continue if the tests hits $1$, signifying a rejection at level $\alpha$, to hope for a rejection at a smaller level $\alpha' < \alpha$ as in Section \ref{sec:switching_alpha}.
        But if attaining a rejection at level $\alpha$ was our goal, then this may not be of interest.
        For these reasons, we primarily view the application of our work to such classical tests as establishing the theoretical bridge between classical testing and anytime valid sequential testing, which are often treated as different paradigms in the literature.
        That said, these sequential tests induced from Neyman-Pearson-style tests also expand classical testing by offering the option to intervene before time $N$ using optional continuation, allowing analysts to adapt their experiment based on the present situation.
        Given the wide use of classical tests and a general reluctance to move away from established methodology, we believe inducing sequential tests from Neyman-Pearson-style tests is a valuable addition to the statistical toolbox.
        }

        \added{
        So what do we advocate for?
        Our Proposition \ref{prp:admissible} shows that admissible anytime valid sequential tests $(\phi_n)$ can be fully summarized by their terminal test $\phi_\infty = \limsup_{n \to \infty} \phi_n$.
        This suggests that the development of a `good' sequential tests may perhaps be reduced to the development of a `good' terminal tests, and the subsequent induction of a sequential test from this terminal test.
        For this reason, we believe an interesting area of future research is the development of appropriate terminal tests, which for the reasons given above will likely not be of the Neyman-Pearson form.
        }

        A related message in our work is that we believe there should be a broader appreciation of $[0, 1]$-valued tests over $\{0, 1\}$-valued tests, which often seem shunned in the literature due to their classical connection to randomization.
        Indeed, $[0, 1]$-valued tests are much more amenable to optional continuation.
        Here, we personally prefer rescaling to the $[0, 1/\alpha]$ `evidence-scale' that gives rise to the e-value, for reasons listed in \citet{koning2024continuoustesting}.
        However, our work shows that anytime validity and optional continuation can be used on the traditional $[0, 1]$-scale, which is more familiar to most statisticians.

        \added{
        A final interesting finding is that for the purpose of optional continuation, our Theorem \ref{thm:pointwise_continuation} suggests that one should report the entire test family $(\phi^{\mathbb{P}})_{\mathbb{P} \in H}$ instead of the single test outcome $\phi^H := \inf_{\mathbb{P} \in H}$.
        This connects to calls to report confidence sets over test outcomes: in  \citet{koning2025optimal} it is shown that such a collection may be interpreted as a $[0,1]$-valued `fuzzy' generalization of a confidence set, and explained how they may be used in subsequent decision making (see also \citet{grunwald2023posterior, andrews2025certified, kiyani2025decision}).
        }
        
    \section{Acknowledgments}
        We sincerely thank Wouter Koolen for feedback on an early presentation of this work: he confirmed our suspicions that our results were far more general than we were able to show at the time.
        Moreover, we thank Simon Benhaiem, Lasse Fischer, Peter Gr\"unwald, Jesse Hemerik, Stan Koobs, Aaditya Ramdas and Dante de Roos for their comments.
        We also thank two anonymous referees for their valuable input.

    {
    \bibliographystyle{plainnat}
    \bibliography{Bibliography-MM-MC}
    }
    \appendix  
    \section{\added{Counterexample for Proposition \ref{prp:condition_2}}} \label{app:counter}
        Suppose we sequentially observe $(X_1,X_2)$ taking values in $\{0,1\}\times\{0,1,2\}$ with probability mass
        \begin{align*}
             P_\theta(x_1,x_2)\propto \exp\{\theta\,k(x_1,x_2)\}, \qquad \theta\in \mathbb{R},
        \end{align*}
        where the function $k : \{0, 1\} \times \{0, 1, 2\} \to \{0, 1, 2, 3\}$ is given in the following table:
        \begin{align*}
            \begin{array}{c|ccc}
                k(x_1,x_2) & x_2 = 0 & x_2 = 1 & x_2 = 2\\ 
                \hline
                x_1 = 0 & 0 & 2 & 2 \\
                x_1 = 1 & 2 & 3 & 1
            \end{array}
        \end{align*}
        Define the statistics $T_1:=X_1$ and $T:=k(X_1,X_2)$.
        
        Trivially, $T_1$ is sufficient for $X_1$, and $T$ is sufficient for $(X_1,X_2)$ by the factorization theorem. Moreover, both admit monotone likelihood ratios: for $T_1$ the odds ratio
        \begin{align*}
            \frac{P_\theta(T_1=1)}{P_\theta(T_1=0)}=\frac{e^\theta+e^{2\theta}+e^{3\theta}}{1+2e^{2\theta}}
        \end{align*}
        is increasing in $\theta$, while for $T$, $P_\theta(T=s)/P_\theta(T=r)\propto e^{(s-r)\theta}$ is increasing in $\theta$ for $s>r$.  
        
        Nonetheless, stochastic ordering fails. At $\theta=\ln 2$, 
        \begin{align*}
        P(T\ge2\mid T_1=0)&=\frac{4+4}{1+4+4}=\tfrac{8}{9},\\ \qquad P(T\ge2\mid T_1=1)&=\frac{4+8}{4+8+2}=\tfrac{6}{7},
        \end{align*}
        hence $T$ is not stochastically non-decreasing in $T_1$.

    \section{Proofs}\label{appn:proofs}
        \subsection{Proof of Theorem \ref{thm:simple}}\label{app:simple}
             \begin{proof}
                We start with the first claim.
                By the law of iterated expectations,
                \begin{align*}
                    \mathbb{E}^{\mathbb{P}}[\phi_n \mid \mathcal{F}_{n-1}]
                        = \mathbb{E}^{\mathbb{P}}[\mathbb{E}^{\mathbb{P}}[\phi \mid \mathcal{F}_{n}] \mid \mathcal{F}_{n-1}]
                        = \mathbb{E}^{\mathbb{P}}[\phi \mid \mathcal{F}_{n-1}]
                        = \phi_{n-1},
                \end{align*}
                so that $(\phi_n)$ is a martingale with respect to $(\mathcal{F}_n)$.
                Since $\phi_n \in [0, 1]$, the sequence $(\phi_n)$ is uniformly integrable so that by Doob's optional stopping theorem for uniformly integrable martingales, we have that
                \begin{align}\label{eq:doob_martingale}
                    \mathbb{E}^{\mathbb{P}}[\phi_\tau] = \mathbb{E}^{\mathbb{P}}[\phi],
                \end{align}
                for every stopping time $\tau$.
                Hence, if $\phi$ is valid, we have $\sup_{\tau} \mathbb{E}^{\mathbb{P}}[\phi_\tau] \leq \alpha$.

                \added{The second claim also immediately follows from \eqref{eq:doob_martingale}, because if $\phi$ is not valid, then for every stopping time $\mathbb{E}^{\mathbb{P}}[\phi_\tau] = \mathbb{E}^{\mathbb{P}}[\phi] > \alpha$.}
    
                \added{For the final claim, suppose that $\phi$ is $\mathcal{F}_\tau$-measurable.
                Then, $\phi_\tau = \mathbb{E}^{\mathbb{P}}[\phi \mid \mathcal{F}_{\tau}] = \phi$.
                As a result, $\phi = \phi_\tau$.}
            \end{proof}

        \subsection{Proof of Proposition \ref{prp:validity_terminal_test}}\label{appn:validity_terminal_test}
            \begin{proof}
                If $(\phi_n)$ is anytime valid for $H$ then, because $\tau = \infty$ is a stopping time,
                \begin{align*}
                    \mathbb{E}^{\mathbb{P}}[\phi_\infty] 
                        \leq \sup_{\tau} \mathbb{E}^{\mathbb{P}}[\phi_\tau] \leq \alpha.
                \end{align*}
            \end{proof}

        \subsection{Proof of Proposition \ref{prp:admissible}}\label{appn:admissible}
            \begin{proof}
                Using the translation between e-values and tests described in Section \ref{sec:e-values}, we may apply the second claim of Theorem 18 in \citet{ramdas2022admissible} to obtain that $(\phi_n)$ is admissible if and only if it is a martingale.
                Now, as each $\phi_n$ is bounded in $[0, 1]$, $(\phi_n)$ is uniformly integrable.
                By Theorem 9.22 in \citet{Kallenberg2021foundations}, this implies $(\phi_n)$ is closable at $\sup n = \infty$.
                That is, $\phi_n = \mathbb{E}^{\mathbb{P}}[\phi_\infty \mid \mathcal{F}_n]$, $\mathbb{P}$-almost surely, for every $n$.
            \end{proof}

        \subsection{Proof of Proposition \ref{prp:e_to_essinf_e}}\label{appn:e_to_essinf_e}
            \begin{proof}
                We start with the left-to-right direction.
                Suppose that $\phi^H$ is valid for $H$.
                For every $\mathbb{P} \in H$, set $\phi^{\mathbb{P}} = \phi^H$.
                As $\phi^H$ is valid for $H$, it is valid for every $\mathbb{P} \in H$, which implies $\phi^{\mathbb{P}}$ is valid for $\mathbb{P}$.
                Moreover, $\essinf_{\mathbb{P} \in H} \phi^{\mathbb{P}} = \phi^H$.
        
                For the converse, suppose every $\phi^{\mathbb{P}}$ is valid for $\mathbb{P}$.
                Then,
                \begin{align*}
                    \sup_{\mathbb{P} \in H} \mathbb{E}^{\mathbb{P}}[\phi^H] \equiv \sup_{\mathbb{P} \in H} \mathbb{E}^{\mathbb{P}}[\essinf_{\mathbb{P}' \in H} \phi^{\mathbb{P}'}] \leq \sup_{\mathbb{P} \in H}  \mathbb{E}^{\mathbb{P}}[\phi^{\mathbb{P}}] \leq \alpha,
                \end{align*}
                so that $\phi^H$ is valid for $H$.
            \end{proof}
        
        \subsection{Proof of Theorem \ref{thm:composite}}\label{appn:composite}
            \begin{proof}
                For the first claim, we have
                \begin{align*}
                     \sup_{\tau \in \mathcal{T}} \sup_{\mathbb{P} \in H} \mathbb{E}^{\mathbb{P}}[\phi_\tau^H]
                        &= \sup_{\mathbb{P} \in H} \sup_{\tau \in \mathcal{T}} \mathbb{E}^{\mathbb{P}}[\phi_\tau^H]
                        = \sup_{\mathbb{P} \in H}  \sup_{\tau \in \mathcal{T}} \mathbb{E}^{\mathbb{P}}\left[\essinf_{\mathbb{P}' \in H}\mathbb{E}^{\mathbb{P}'}[\phi^{\mathbb{P}'} \mid \mathcal{F}_{\tau}]\right] \\
                        &\leq \sup_{\mathbb{P} \in H}  \sup_{\tau \in \mathcal{T}} \mathbb{E}^{\mathbb{P}}\left[\mathbb{E}^{\mathbb{P}}\left[\phi^{\mathbb{P}} \mid \mathcal{F}_{\tau}\right]\right]
                        =  \sup_{\mathbb{P} \in H} \mathbb{E}^{\mathbb{P}}[\phi^{\mathbb{P}}] \leq \alpha,
                \end{align*}
                \added{where the final equality follows from the law of iterated expectations, since $\mathcal{F}_\tau$ is a $\sigma$-algebra.}
        
                For the third claim, we assume that for every $\mathbb{P} \in H$, $\phi^{\mathbb{P}}$ is $\mathcal{F}_{\tau}$-measurable.
                As a consequence,
                \begin{align*}
                    \phi_\tau^H
                        &= \essinf_{\mathbb{P} \in H} \mathbb{E}^{\mathbb{P}}[\phi^{\mathbb{P}} \mid \mathcal{F}_{\tau}]
                        = \essinf_{\mathbb{P} \in H} \phi^{\mathbb{P}}.
                \end{align*}
        
                For the second claim, we assume that $\phi^H$ is not valid.
                This implies there exists some $\mathbb{P}^* \in H$ such that $\mathbb{E}^{\mathbb{P}^*}[\essinf_{\mathbb{P} \in H} \phi^{\mathbb{P}}] > \alpha$.
                Since each $\phi^{\mathbb{P}}$ is $\mathcal{F}_\infty$-measurable, applying the third claim with $\tau = \infty$ yields $\phi_\infty^H = \essinf_{\mathbb{P} \in H} \phi^{\mathbb{P}}$.
                Hence, $\mathbb{E}^{\mathbb{P}^*}[\phi_\infty^H] = \mathbb{E}^{\mathbb{P}^*}[\essinf_{\mathbb{P} \in H} \phi^{\mathbb{P}}] > \alpha$.
                As a consequence, there exists a stopping time $\tau$ such that $\phi_\tau^H$ is not valid, which implies $(\phi_n^H)$ is not anytime valid.
            \end{proof}

        \subsection{Proof of Proposition \ref{prp:condition_2}}\label{appn:condition_2}
        	\begin{proof}
                By the tower property, we have
                \begin{align*}
                    \phi_n
                        = \mathbb{E}^{\mathbb{P}_{\delta^0}}[\phi \mid \mathcal{F}_n]
                        = \mathbb{E}^{\mathbb{P}_{\delta^0}}[\phi \mid T_n]
                        = \mathbb{E}^{\mathbb{P}_{\delta^0}}[\mathbb{E}[\phi \mid T] \mid T_n].
                \end{align*}
                As $\phi$ is non-decreasing in $T$ and $T$ is stochastically non-decreasing in $T_n$, we have that $\phi_n$ is non-decreasing in $T_n$.
            \end{proof}

        \subsection{Proof of Proposition \ref{prp:seqt}}\label{appn:seqt}
        	\begin{proof}
        	    We verify the conditions of Theorem \ref{thm:mlr}, for $\phi_n = \phi_n'$.
        	    By scale-invariance of $T_1, \dots, T_n$, we need only consider $\sigma^2 = 1$ \citep{perez2024e-statistics}.
        	    For $\mu = 0$ and $\sigma^2 = 1$, $\phi_n$ is a non-negative martingale starting at 1 by Theorem \ref{thm:simple}.
        	    The Monotone Likelihood Ratio property holds in $\mu$ for $T_n$, for every $n$ \citep{grunwald2025supermartingales}.
        	    Hence, it remains to show Condition 2 of Theorem \ref{thm:mlr}.
        
        	    For this, it suffices to show that the two properties in Proposition \ref{prp:condition_2} hold.
        	    The first property is evident, as $\phi_N$ simply thresholds $T_N$.
        	    The second property follows from Lemma \ref{lem:increasing}.
        	\end{proof}

        \subsection{Proof of Lemma \ref{lem:increasing}}\label{app:proofseqt}
            In treatments of the $t$-test, it is common to rescale by the standard deviation of the data.
            We find it dramatically more convenient to instead map the data $X^n$ to the unit sphere by dividing by its norm $\|X^n\|_2$:
            \begin{align*}
                U^n := \frac{X^n}{\|X^n\|}.
            \end{align*}
            
            The $t$-test can be re-expressed as a Beta-test \citep{koning2024more}.
            In particular, let $B_n = \iota_n'U_n$ denote our beta-statistic, where $\iota_n := n^{-1/2}(1, \dots, 1)$.
            Then, the $t$-test is equivalent to 
            \begin{align*}
                \mathbb{I}\{B_n > \beta_{1-\alpha}^{N-n}\},
            \end{align*}
            where $\beta_{1-\alpha}^{N-n}$ is the $\alpha$-upper quantile of a Beta$\left(\frac{N-n-1}{2}, \frac{N-n-1}{2}\right)$-distribution on the interval $[-1, 1]$.
            The traditional expression of the $t$-test can be obtained through the strictly monotone transformation
            \begin{align}
                T_n \mapsto \frac{T_n}{\sqrt{T_n^2 + n - 1}} = B_n.
            \end{align}
    
            We will now prove the following lemma, which implies that $B_N$ is stochastically increasing in $B_n$, which in turn implies that $T_N$ is stochastically increasing in $T_n$ under $\mu = 0$ and $\sigma^2 > 0$, proving Lemma \ref{lem:increasing}.
            In fact, the result only uses that $U^n$ is uniform on the unit sphere, which holds if $X^n$ is rotationally invariant, not necessarily Gaussian.
    
            \begin{lem}\label{lem:conditional_beta}
                The conditional distribution of $B_N$ given $B_n = b_n$ can be represented as
                \begin{align*}
                    B_N \mid (B_n = b_n) \overset{d}{=} \sqrt{\frac{n}{N}} b_n \sqrt{W} + \sqrt{\frac{N-n}{N}}\widetilde{B}_{N-n} \sqrt{1-W},
                \end{align*}
                where $\widetilde{B}_{N-n} \sim \textnormal{Beta}\left(\frac{N-n-1}{2}, \frac{N-n-1}{2}\right)$ on the interval $[-1, 1]$, and $W \sim \textnormal{Beta}\left(\frac{n}{2}, \frac{N-n}{2}\right)$, independently.
            \end{lem}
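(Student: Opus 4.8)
The plan is to reduce everything to the uniform distribution on the sphere. Since $B_n$ and $B_N$ depend on $X^N$ only through the normalized vector $X^N/\|X^N\|$, which is uniform on $S^{N-1}$ whenever $X^N$ is rotationally invariant, I would assume without loss of generality that $X^N$ is itself uniform on $S^{N-1}$. I would then split $X^N$ into its first block $X^n = (X_1,\dots,X_n)$ and its complement $X^{(2)} = (X_{n+1},\dots,X_N)$, with radii $R_n := \|X^n\|$ and $R_{(2)} := \|X^{(2)}\|$ (so that $R_n^2 + R_{(2)}^2 = 1$) and directions $U^n := X^n/R_n$ and $V := X^{(2)}/R_{(2)}$. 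The engine of the argument is the standard ``spherical stick-breaking'' decomposition: when $X^N$ is uniform on $S^{N-1}$, the triple $(U^n, V, W)$ with $W := R_n^2$ is mutually independent, $U^n$ is uniform on $S^{n-1}$, $V$ is uniform on $S^{N-n-1}$, and $W \sim \mathrm{Beta}(n/2,(N-n)/2)$. I would justify this via the Gaussian representation $X^N \overset{d}{=} G/\|G\|$ with $G \sim \mathcal{N}(0,I_N)$: splitting $G$ into independent sub-vectors $G^{(1)}\in\mathbb{R}^n$, $G^{(2)}\in\mathbb{R}^{N-n}$ and using that a Gaussian vector's norm and direction are independent shows that $U^n = G^{(1)}/\|G^{(1)}\|$, $V = G^{(2)}/\|G^{(2)}\|$ and $W = \|G^{(1)}\|^2/(\|G^{(1)}\|^2+\|G^{(2)}\|^2)$ are independent with the stated laws (the last being a ratio of independent chi-squares).

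Next I would carry out the (purely algebraic) step of rewriting $B_N$. Writing $\iota_N = N^{-1/2}\mathbf{1}_N$ in block form and using $\mathbf{1}_n = \sqrt{n}\,\iota_n$ and $\mathbf{1}_{N-n} = \sqrt{N-n}\,\iota_{N-n}$, one gets
\[
B_N = \iota_N' X^N = \iota_N'\big(R_n U^n,\ R_{(2)} V\big) = \sqrt{\tfrac{n}{N}}\,\sqrt{W}\,(\iota_n' U^n) + \sqrt{\tfrac{N-n}{N}}\,\sqrt{1-W}\,(\iota_{N-n}' V).
\]
Setting $B_n := \iota_n' U^n$ and $\widetilde B_{N-n} := \iota_{N-n}' V$, the marginal laws follow from the one-dimensional-projection fact that the inner product of a uniform unit vector in $\mathbb{R}^m$ with a fixed unit vector is $\mathrm{Beta}((m-1)/2,(m-1)/2)$ on $[-1,1]$, giving $B_n \sim \mathrm{Beta}((n-1)/2,(n-1)/2)$ and $\widetilde B_{N-n}\sim\mathrm{Beta}((N-n-1)/2,(N-n-1)/2)$ on $[-1,1]$; and since $B_n$, $\widetilde B_{N-n}$ and $W$ are measurable functions of the mutually independent triple $(U^n, V, W)$, they are themselves mutually independent.

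Finally, I would condition on $B_n = b_n$. Because $B_n$ is independent of the pair $(\widetilde B_{N-n}, W)$, conditioning on $B_n = b_n$ leaves the joint (product) law of $(\widetilde B_{N-n}, W)$ unchanged, so substituting $b_n$ into the displayed identity gives exactly
\[
B_N \mid (B_n = b_n)\ \overset{d}{=}\ \sqrt{\tfrac{n}{N}}\,b_n\sqrt{W} + \sqrt{\tfrac{N-n}{N}}\,\widetilde B_{N-n}\sqrt{1-W},
\]
with $\widetilde B_{N-n}\sim\mathrm{Beta}((N-n-1)/2,(N-n-1)/2)$ on $[-1,1]$ independent of $W\sim\mathrm{Beta}(n/2,(N-n)/2)$ --- the claim. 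I expect the main obstacle to be giving a clean, self-contained justification of the mutual independence in the spherical stick-breaking decomposition, together with handling the degenerate boundary cases ($n=1$, where $\mathrm{Beta}(0,0)$ should be read as the two-point law on $\{-1,1\}$, consistent with $X_1\neq 0$ and $T_1\in\{\pm\infty\}$; and $n=N$, where $W=1$ a.s.\ and the identity collapses to $B_N = b_N$). Everything else is bookkeeping. As an immediate corollary, for every fixed $(\widetilde B_{N-n}, W)$ the right-hand side is non-decreasing --- indeed $\mathbb{P}$-a.s.\ strictly increasing, as $W>0$ a.s.\ --- in $b_n$; coupling two values of $b_n$ on a common draw of $(\widetilde B_{N-n},W)$ shows $b_n\mapsto\mathcal{L}(B_N\mid B_n=b_n)$ is stochastically non-decreasing, and pushing this through the strictly increasing reparametrization $T_n\mapsto B_n$ yields Lemma~\ref{lem:increasing}.
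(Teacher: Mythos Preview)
Your proof is correct and follows essentially the same route as the paper: both pass through the Gaussian representation $X^N \overset{d}{=} G/\|G\|$, split $G$ into independent blocks, and use the chi-square ratio $\|G^{(1)}\|^2/(\|G^{(1)}\|^2+\|G^{(2)}\|^2)\sim\mathrm{Beta}(n/2,(N-n)/2)$ to identify $W$. Your organization is a bit more modular---you isolate the spherical stick-breaking decomposition and then condition, whereas the paper conditions first and simplifies afterward---and you add the boundary cases and the monotonicity corollary explicitly, but the core argument is identical.
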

            \begin{proof}
                To derive the conditional distribution of $B_N$ given $B_n$, it is convenient to take a brief detour through the Gaussian distribution so that we can use some well-known results on the Gaussian distribution.
    
                Let $Z^n$ denote a multivariate standard Gaussian random variable.
                Recall that a draw from $Z^n$ can be decomposed into the product of two independent random variables: $Z^n = \nu_n U^n$, where $\nu_n \sim \chi_n$ and $U^n$ is uniform on the sphere.
                Moreover, by the independence of the elements of $Z^N$, we can decompose it further into:
                \begin{align*}
                    Z^N
                        \overset{d}{=} (\nu_n U^n, \tilde{\nu}_{N- n} \widetilde{U}^{ N-n}),
                \end{align*}
                where independently, $\tilde{\nu}_{N - n}\sim \chi_{N-n}$ and $\widetilde{U}^{ N-n}$ is uniform on the sphere in dimension $(N-n)$.
    
                The conditional distribution of $B_N \mid B_n = b_n$ can then be expressed as
                \begin{align*}
                    B_N \mid (B_n = b_n)
                        &\overset{d}{=}\frac{\iota'Z^N}{||Z^N||}\Big|\frac{\iota'Z^n}{||Z^n||} = b_n \\
                        &\overset{d}{=}\frac{\frac{\sqrt{n}}{\sqrt{N}}b_n\nu_n+\frac{\sqrt{N-n}}{\sqrt{N}}\iota'Z_{N-n}}{\sqrt{\nu_n^2+||Z_{N-n}||^2}} \\
                        &\overset{d}{=} \frac{\frac{\sqrt{n}}{\sqrt{N}}b_n\nu_n+\frac{\sqrt{N-n}}{\sqrt{N}}\tilde{\nu}_{N-n}\iota'\widetilde{U}^{N-n}}{\sqrt{\nu_n^2+\tilde{\nu}_{N-n}^2}} \\
                        &\overset{d}{=} \frac{\frac{\sqrt{n}}{\sqrt{N}}b_n\nu_n+\frac{\sqrt{N-n}}{\sqrt{N}}\tilde{\nu}_{N-n}\widetilde{B}_{N-n}}{\sqrt{\nu_n^2+\tilde{\nu}_{N-n}^2}},
                \end{align*}
                where $\widetilde{B}_{N-n} \sim \textnormal{Beta}\left(\frac{N-n-1}{2}, \frac{N-n-1}{2}\right)$ on the interval $[-1, 1]$.
                
                This is an increasing function of $b_n$, so that we could here already immediately conclude that $B_N$ is stochastically increasing in $B_n$, so that $T_N$ is stochastically increasing in $T_n$.
                However, we continue to obtain the nicer expression.
    
                Next, recall that if $Y_1 \sim \chi_{k_1}^2$ and $Y_2 \sim\chi_{k_2}^2$, independently, then
                \begin{align*}
                    \frac{Y_1}{Y_1 + Y_2} \sim \textnormal{Beta}\left(\frac{k_1}{2}, \frac{k_2}{2}\right).
                \end{align*}
                As a consequence,
                \begin{align*}
                    B_N \mid (B_n = b_n)
                        &\overset{d}{=} c\sqrt{\frac{\nu_n^2}{\nu_n^2+\tilde{\nu}_{N-n}^2}}+d\sqrt{\frac{\tilde{\nu}_{N-n}^2}{\nu_n^2+\tilde{\nu}_{N-n}^2}} \\
                        &\overset{d}{=} c\sqrt{W}+d\sqrt{1-W},
                \end{align*}
                for $c = \sqrt{\frac{n}{N}}B_n$ and $d = \sqrt{\frac{N-n}{N}}\widetilde{B}_{N-n}$.
                Substituting $c$ and $d$ back in yields the desired expression.
            \end{proof}
            
        \subsubsection{Derivation sequential $t$-test}\label{app:deriv}
            We use the expression
            \begin{align*}
                B_N \mid (B_n = b_n) \overset{d}{=} \sqrt{\frac{n}{N}} b_n \sqrt{W} + \sqrt{\frac{N-n}{N}}\widetilde{B}_{N-n} \sqrt{1-W},
            \end{align*}
            derived in Lemma \ref{lem:conditional_beta} to obtain a nice expression for the tail probability
            \begin{align*}
                \phi_n' &= \mathbb{P}\Bigl[T_N > c_{\alpha_N}\mid T_n = t_n \Bigr]= \mathbb{P}\Bigl[B_N > \beta_{1-\alpha}\mid B_n = b_n \Bigr].
            \end{align*}
            
            The conditional probability given $W = w$ can be expressed as
            \begin{align*}
                F_{\widetilde{B}_{N-n}}\left(\frac{\sqrt{n} b_n \sqrt{w} - \sqrt{N} \beta_{1-\alpha}}{\sqrt{N-n}\sqrt{1-w}}\right),
            \end{align*}
            where $F_{\widetilde{B}_{N-n}}$ is the CDF of a Beta($\frac{N-1}{2},\frac{N-1}{2})$-distribution on [-1,1].
            Then integrating out $w$ yields
            \begin{align*}
                \phi'_n 
                    &=\int_0^1 F_{\widetilde{B}_{N-n}}\left(\frac{\sqrt{n} b_n \sqrt{w} - \sqrt{N} \beta_{1-\alpha}}{\sqrt{N-n}\sqrt{1-w}}\right) f(w)dw,
            \end{align*}
            where $f(w)$ is the density of a Beta$\left(\frac{n}{2},\frac{N-n}{2}\right)$ distribution.
            Recalling that $b_n = n^{-1/2}\sum_{i=1}^n x_i / \|x^n\|_2$ and some rewriting yields the expression in the main text.

        \subsection{Proof of Theorem \ref{thm:seq_significance_level}}\label{appn:seq_significance_level}
        	\begin{proof}
                We start with the first claim.
                For every $\mathbb{P}\in H$, we have
                \begin{align*}
                    \mathbb{E}^{\mathbb{P}}[\phi^*]
                        = \ \mathbb{E}^{\mathbb{P}}[\mathbb{E}^{\mathbb{P}}[\phi^* \mid \mathcal{F}_{\sigma}]]
                        \leq \mathbb{E}^{\mathbb{P}}[\phi_{\sigma}]
                        \leq \alpha,
                \end{align*}
                where the final inequality follows from the anytime validity of $(\phi_n)_{n \in \mathbb{N}}$.
                
                \added{
                We continue with the second claim.
                With a slight abuse of notation, we write $\phi_n^\dagger
                        =  \phi_n \mathbb{I}\{n \leq \sigma\} + \phi_n^* \mathbb{I}\{n > \sigma\}$.
                Fix an arbitrary stopping time $\tau$ adapted to $(\mathcal{F}_n)_{n \geq 0}$ and fix $\mathbb{P} \in H$.
                We then split the expectation of $\phi_\tau^\dagger$ into
                \begin{align*}
                    \mathbb{E}^{\mathbb{P}}[\phi_\tau^\dagger]
                        = \mathbb{E}^{\mathbb{P}}[\phi_\tau \mathbb{I}\{\tau \leq \sigma\}] + \mathbb{E}^{\mathbb{P}}[\phi_\tau^*\mathbb{I}\{\tau > \sigma\}].
                \end{align*}
                Consider the stopping time $\tau' = \tau \vee \sigma$ with respect to $(\mathcal{F}_n)_{n \geq \sigma}$.
                On the event $\{\tau > \sigma\}$ we have $\tau' = \tau$ and hence $\phi_\tau^* = \phi_{\tau'}^*$.
                Combining this with the $\mathcal{F}_\sigma$-measurability of $\{\tau > \sigma\}$ and the tower property gives
                \begin{align*}
                    \mathbb{E}^{\mathbb{P}}[\phi_\tau^*\mathbb{I}\{\tau > \sigma\}]
                        = \mathbb{E}^{\mathbb{P}}[\phi_{\tau'}^*\mathbb{I}\{\tau > \sigma\}]
                        = \mathbb{E}^{\mathbb{P}}[\mathbb{E}^{\mathbb{P}}[\phi_{\tau'}^* \mid \mathcal{F}_\sigma] \mathbb{I}\{\tau > \sigma\}].
                \end{align*}
                Applying the conditional anytime validity of $(\phi_n^*)$ to $\tau'$ yields $\mathbb{E}^{\mathbb{P}}[\phi_{\tau'}^* \mid \mathcal{F}_\sigma] \leq \phi_\sigma$, and hence
                \begin{align*}
                     \mathbb{E}^{\mathbb{P}}[\phi_\tau^*\mathbb{I}\{\tau > \sigma\}]
                        \leq \mathbb{E}^{\mathbb{P}}[\phi_\sigma\mathbb{I}\{\tau > \sigma\}].
                \end{align*}
                Now, using that $\phi_{\tau \wedge \sigma} = \phi_\tau \mathbb{I}\{\tau \leq \sigma\} + \phi_\sigma \mathbb{I}\{\tau > \sigma\}$, we have
                \begin{align*}
                    \mathbb{E}^{\mathbb{P}}[\phi_\tau^\dagger]
                        \leq \mathbb{E}^{\mathbb{P}}[\phi_\tau \mathbb{I}\{\tau \leq \sigma\}] + \mathbb{E}^{\mathbb{P}}[\phi_\sigma \mathbb{I}\{\tau > \sigma\}]
                        = \mathbb{E}^{\mathbb{P}}[\phi_{\tau \wedge \sigma}]
                        \leq \alpha,
                \end{align*}
                where the final inequality follows from the fact that $\tau \wedge \sigma$ is a stopping time and the anytime validity of $(\phi_n)$.}
            \end{proof}
        
        \subsection{Proof of Theorem \ref{thm:pointwise_continuation}}\label{appn:pointwise_continuation}
            \begin{proof}
                We have
                \begin{align*}
                    \sup_{\mathbb{P} \in H} \mathbb{E}^{\mathbb{P}}\left[\essinf_{\mathbb{P}' \in H} \phi_2^{\mathbb{P}'}\right]
                        &\leq \sup_{\mathbb{P} \in H} \mathbb{E}^{\mathbb{P}}\left[\phi_2^{\mathbb{P}}\right]
                        = \sup_{\mathbb{P} \in H} \mathbb{E}^{\mathbb{P}}\left[\mathbb{E}^{\mathbb{P}}[\phi_2^{\mathbb{P}} \mid \mathcal{F}_1]\right]
                        \leq \sup_{\mathbb{P} \in H} \mathbb{E}^{\mathbb{P}}\left[\phi_1^{\mathbb{P}}\right]
                        \leq \alpha,
                \end{align*}
                where the second inequality follows from the conditional validity of $\phi_2^{\mathbb{P}}$ at level $\phi_1^{\mathbb{P}}$ and the final inequality from the validity of every $\phi_1^{\mathbb{P}}$.
            \end{proof}

        \added{
        \subsection{Proof of Theorem \ref{thm:log_invariance}}\label{appn:log_invariance}
        	\begin{proof}
        	    By log-utility-optimality, $\varepsilon = d\mathbb{Q} / d\mathbb{P}$, so that it remains to show that $\varepsilon_n = d\mathbb{Q}_{\mathcal{F}_n} / d\mathbb{P}_{\mathcal{F}_n}$, which is well defined as $\mathbb{P} \gg \mathbb{Q}$ implies $\mathbb{P}_{\mathcal{F}_n} \gg \mathbb{Q}_{\mathcal{F}_n}$.
        	    For every $F_n \in \mathcal{F}_n$, we have 
        	    \begin{align*}
        	        \mathbb{Q}(F_n)
        	            &= \mathbb{E}^{\mathbb{Q}}[1_{F_n}]
        	            = \mathbb{E}^{\mathbb{P}}[\varepsilon 1_{F_n}]
        	            = \mathbb{E}^{\mathbb{P}}[\mathbb{E}^{\mathbb{P}}[\varepsilon 1_{F_n} \mid \mathcal{F}_n]]
        	            = \mathbb{E}^{\mathbb{P}}[1_{F_n}\mathbb{E}^{\mathbb{P}}[\varepsilon \mid \mathcal{F}_n]]
        	            = \mathbb{E}^{\mathbb{P}}[1_{F_n} \varepsilon_n].
        	    \end{align*}
        	    Since $\mathbb{P}_{\mathcal{F}_n}(F_n) = \mathbb{P}(F_n)$ and $\mathbb{Q}_{\mathcal{F}_n}(F_n) = \mathbb{Q}(F_n)$ for $F_n \in \mathcal{F}_n$, this can be written as
        	    \begin{align*}
        	        \mathbb{Q}_{\mathcal{F}_n}(F_n)
        	            = \mathbb{E}^{\mathbb{Q}_{\mathcal{F}_n}}[1_{F_n}]
        	            = \mathbb{E}^{\mathbb{P}_{\mathcal{F}_n}}[1_{F_n} \varepsilon_n],
        	    \end{align*}
        	    for every $F_n \in \mathcal{F}_n$.
        	    Hence, the $\mathcal{F}_n$-measurable function $\varepsilon_n$ satisfies the defining property of the Radon-Nikodym derivative of $\mathbb{Q}_{\mathcal{F}_n}$ with respect to $\mathbb{P}_{\mathcal{F}_n}$, so that $\varepsilon_n = d\mathbb{Q}_{\mathcal{F}_n} / d\mathbb{P}_{\mathcal{F}_n}$, $\mathbb{P}$-almost surely.
        	\end{proof}
        }

        \added{
        \subsection{Proof of Proposition \ref{prp:define_log}}\label{appn:define_log}
        	\begin{proof}
        	    Let us write $\textnormal{LR}=d\mathbb{Q}/d\mathbb{P}$.
        	    Under the provided conditions on $U$ and the optimality of $\varepsilon$,
        	    \begin{align*}
        	        \varepsilon = (U')^{-1}\left(\lambda / \textnormal{LR}\right),
        	    \end{align*}
        	    for some $\lambda > 0$, since the constraint $\mathbb{E}^{\mathbb{P}}[\varepsilon] = 1$ must be binding here; else $\varepsilon$ could be strictly improved.
        	    Hence, $\varepsilon = g_\lambda(\textnormal{LR})$ for $g_\lambda(x) = (U')^{-1}(\lambda / x)$.
        	    Since $U'$ is continuous and strictly decreasing, $g_\lambda$ is continuous on $(0, \infty)$.
        
        	    By definition we have $\varepsilon_n := \mathbb{E}^{\mathbb{P}}[\varepsilon \mid \mathcal{F}_n] = \mathbb{E}^{\mathbb{P}}[g_\lambda(\textnormal{LR}) \mid \mathcal{F}_n]$.
        	    Moreover, by assumption $\varepsilon_n$ is $U$-optimal when restricted to $\mathcal{F}_n$, and so $\varepsilon_n = g_{\lambda_n}(\textnormal{LR}_n)$ for some $\lambda_n > 0$, where $\textnormal{LR}_n := d\mathbb{Q}_{\mathcal{F}_n}/d\mathbb{P}_{\mathcal{F}_n}$.
        	    
        	    By Theorem \ref{thm:log_invariance} we have $\textnormal{LR}_n = \mathbb{E}^{\mathbb{P}}[\textnormal{LR}\mid\mathcal{F}_n]$.
        	    Hence, for $\varepsilon_n$ to be $U$-optimal we require
        	    \begin{align}\label{eq:jensen}
        	        \mathbb{E}^{\mathbb{P}}[g_\lambda(\textnormal{LR})\mid\mathcal{F}_n]
        	            = g_{\lambda_n}\left(\mathbb{E}^{\mathbb{P}}[\textnormal{LR}\mid\mathcal{F}_n]\right).
        	    \end{align}
        
        	    We now use the fact that we assumed this holds for any $\mathbb{P}\gg\mathbb{Q}$ and underlying measure space, to construct a setting in which we may apply Theorem 13.2.2 from \citet{kuczma2009introduction}.
        	    In particular, let $x, y > 0$ and consider an underlying $\sigma$-algebra $\mathcal{F}$ such that there exist disjoint
        	    $A, B, C\in \mathcal{F}$ with $\mathbb{P}(A) = \mathbb{P}(B) > 0$ and $C = (A \cup B)^c$.
        	    Define
        	    $\textnormal{LR} = x 1_A + y 1_B + z 1_C$, where $z > 0$ is chosen so that
        	    $\mathbb{E}^{\mathbb{P}}[\textnormal{LR}] = 1$ (which is possible since we may choose $\mathbb{P}(A) = \mathbb{P}(B) > 0$ sufficiently small).
        	    Furthermore, define $\mathbb{Q}$ through $\textnormal{LR} = d\mathbb{Q}/d\mathbb{P}$.
        	    Finally, take $\mathcal{F}_n = \sigma(A\cup B, C)$.
        
        	    On $C$ we have $\mathbb{E}^{\mathbb{P}}[\textnormal{LR} \mid \mathcal{F}_n] = z$ and $\mathbb{E}^{\mathbb{P}}[g_\lambda(\textnormal{LR}) \mid \mathcal{F}_n] = g_\lambda(z)$, so that \eqref{eq:jensen} yields $g_\lambda(z) = g_{\lambda_n}(z)$.
        	    Since $\lambda \mapsto g_\lambda(z) = (U')^{-1}(\lambda/z)$ is strictly monotone, this implies $\lambda_n = \lambda$.
        	    
        	    Then, on $A \cup B$, we have both
        	    \begin{align*}
        	        \mathbb{E}^{\mathbb{P}}[\textnormal{LR}\mid\mathcal{F}_n]
        	            = \frac{x+y}{2}
        	    \end{align*}
        	    and
        	    \begin{align*}
        	        \mathbb{E}^{\mathbb{P}}[g_\lambda(\textnormal{LR})\mid\mathcal{F}_n]
        	            = \frac{g_\lambda(x) + g_\lambda(y)}{2}.
        	    \end{align*}
        	    Hence, since $\lambda = \lambda_n$, \eqref{eq:jensen} requires $g_\lambda(\frac{x+y}{2}) = (g_\lambda(x) + g_\lambda(y)) / 2$ for every $x, y > 0$.
        
        	    Now, by Theorem 13.2.2 from \citet{kuczma2009introduction} and the continuity of $g_\lambda$, we obtain that $g_\lambda$ is affine.
        	    Hence $(U')^{-1}(\lambda/x) = ax + b$, which implies $U'(u) = \lambda a / (u - b)$, and so $U(u) = c \log(u - b) + d$, where $a, b, c$ and $d$ are constants.
        	    Since $U$ is strictly increasing, $a > 0$ and $U'(u) > 0$ on $(0,\infty)$, hence $b \leq 0$.
        	    Finally, $\lim_{x\to 0} U'(x) = \infty$ forces $b = 0$, so that $U = \log$ up to affine transformations.
        	\end{proof}
        }
        
        \subsection{Proof of Theorem \ref{thm:LR_sequential_z-test}} \label{app:gaussLR}
            \begin{proof}
                We write $S_n := \log \mathrm{LR}_n$ and $D := D(\mathbb{Q}\|\mathbb{P})$.
                By the atomless assumption, the NP-test is of the form
                \begin{align*}
                    \phi_{N,\alpha_N} = \mathbb{I}\{S_N \geq t_N\},
                \end{align*}
                where $t_N$ is a unique scalar that ensures
                \begin{align}
                    \mathbb{P}(S_N \geq t_N) = \exp\{-N D\} \equiv \alpha_N. \label{eq:tnimplicit}
                \end{align}
                Our proof strategy is to use a large deviations theorem by \cite{petrov1965probabilities} to derive an asymptotic expansion for $t_N$, and then use this to derive an expression for $\phi'_{n,\alpha_N}$.
            
                \subsubsection*{Step 1: deriving an expression for $t_N$}
                In this step, we approximate the LHS of \eqref{eq:tnimplicit} through Theorem 2 of \cite{petrov1965probabilities} and use this to invert an expression for $t_N$.
                The conditions for this theorem are satisfied: the non-lattice condition is assumed, $D_{1+\varepsilon}(\mathbb{Q}\|\mathbb{P}) < \infty$ implies the moment-generating function is finite on $[0,1+\epsilon]$, and, by Cramér's theorem, $\mathbb P(S_N \ge t_N)= \exp\{-N D\}$ implies $t_N/N \to D$.
            
               Since $t_N/N\to D$, there exists some deterministic sequence $(b_N)$ such that
                \begin{equation*}\label{eq:tN_decomp}
                    t_N = ND + Nb_N, \quad \text{and $b_N \to 0$}.
                \end{equation*}
                We now apply Theorem~2 of \cite{petrov1965probabilities} with (in their notation) $\delta(N) := |b_N|$, $B:=1+\epsilon$, $c := D$ and $h: = 1$,
                \begin{align*}\label{eq:petrov_tN}
                    \mathbb{P}(S_N \geq t_N) &=  \mathbb{P}(S_N \geq ND+Nb_N)\\ &= \frac{1+o(1)}{\sigma(1)\sqrt{2\pi N}} \exp\left\{-ND - Nb_N - \frac{Nb_N^2}{2\sigma^2(1)}(1 + O(|b_N|))\right\},
                \end{align*}
                where $\sigma^2(1)= \text{Var}_\mathbb{Q}(\log \LR_1)$ is guaranteed to exist by Lemma 1 in \cite{petrov1965probabilities}.
            
              Now that we have an expression for the LHS of \eqref{eq:tnimplicit}, we solve for $t_N$. 
              Plugging this into $\mathbb{P}(S_N \geq t_N) = e^{-ND}$, taking logarithms and rearranging the terms yields:
                \begin{equation}\label{eq:balance}
                    Nb_N + \frac{Nb_N^2}{2\sigma^2(1)}(1 + O(|b_N|)) = -\frac{1}{2}\log N - \log(\sigma(1)\sqrt{2\pi}) + o(1).
                \end{equation}
                As this is an equation, both the LHS and the RHS should be of the same asymptotic order.
                The single highest asymptotic order on the RHS is $O(\log N)$.
                The first term on the LHS is $O(Nb_N)$, whereas the second term is $O(Nb_N^2)$. 
                Since $b_N \to 0$, the first term is of the single highest order.
                Hence, we must have $Nb_N = O(\log N)$, and therefore $Nb_N^2 = O((\log N)^2/N) = o(1)$.
                Thus, \eqref{eq:balance} simplifies to
                \begin{align*}
                    Nb_N = -\frac{1}{2}\log N - \log(\sigma(1)\sqrt{2\pi}) + o(1).
                \end{align*}
                Using that $t_N = ND + Nb_N$, we obtain
                \begin{equation}\label{eq:tN_asymp}
                    t_N = ND - \frac{1}{2}\log N - \log(\sigma(1)\sqrt{2\pi}) + o(1).
                \end{equation}
                \medskip
                \medskip
                \noindent\textbf{Step 2: Conditional probability.}
            
                Fix $n \in \mathbb{N}$ and $s \in \mathbb{R}$. By the i.i.d. assumption,
                \begin{align*}
                    \mathbb{P}(S_N \geq t_N \mid S_n = s) = \mathbb{P}(S_{N-n} \geq t_N - s).
                \end{align*}
                By the same arguments as in step 1, we know that $t_N/(N-n)\to D$.
                Since $s$ is fixed, this also implies
                $(t_N-s)/(N-n)\to D$.
                Hence there exists a deterministic sequence $(c_N)$ such that 
                \begin{align}
                t_N-s = (N-n)D + (N-n)c_N,\quad\text{and $c_N\to 0$}.
                    \label{eq:cn}
                \end{align}
               
                Applying Theorem~2 of \citet{petrov1965probabilities} to $S_{N-n}$ with $\delta(N) := |c_N|$, $B:=1+\epsilon$, $c := D$ and $h: = 1$,
                \begin{align}
                    \mathbb{P}(S_{N-n} \geq t_N-s) &= \frac{1+o(1)}{\sigma(1)\sqrt{2\pi(N-n)}} \\&\times \exp\left\{-(N-n)D - (N-n)c_N - \frac{(N-n)c_N^2}{2\sigma^2(1)}(1 + O(|c_N|))\right\}. \label{eq:petrov2}
                \end{align}
                We proceed by studying the terms $(N-n)c_N$ and $(N-n)c^2_N$.
                First using the definition of $c_N$ in \eqref{eq:cn} and then plugging in $t_N$ derived in \eqref{eq:tN_asymp},
                \begin{align*}
                    (N-n)c_N \equiv t_N - s - (N-n)D = nD - s - \frac{1}{2}\log N - \log(\sigma(1)\sqrt{2\pi}) + o(1).
                \end{align*}

                Since $(N-n)c_N= O(\log N)$, we have that $(N-n)c_N^2=O(\log N^2/N)$, which is $o(1)$. 
                As $c_N =o(1)$, it also holds that the term $(N-n)c_N^2(1+O(|c_N|))$ in \eqref{eq:petrov2} is $o(1)$.
            
                Plugging this into \eqref{eq:petrov2},
                \begin{align*}
                    \mathbb{P}(S_{N-n} \geq t_N - s) &= \frac{1+o(1)}{\sigma(1)\sqrt{2\pi(N-n)}} \exp\left\{-ND + s + \frac{1}{2}\log N + \log(\sigma(1)\sqrt{2\pi})+o(1)\right\} \\
                    &= (1+o(1)).\sqrt{\frac{N}{N-n}} \exp \{-ND + s+o(1)\}\\
                    &= (1+o(1))\exp\{s-ND\}
                \end{align*}
            
                Since $\alpha_N = \exp\{-ND\}$, for each fixed $s \in \mathbb{R}$:
                \begin{equation}\label{eq:limit_fixed_s}
                    \lim_{N \to \infty} \frac{\mathbb{P}(S_{N} \geq t_N|S_n= s)}{\alpha_N} = \exp\{s\}.
                \end{equation}
            
                \medskip
                \noindent\textbf{Step 3: Conclusion.}
                Since our result holds for any $s \in \mathbb{R}$, and $S_n$ is $\mathbb{P}$-a.s.\ finite,
                \begin{align*}
                    \lim_{N \to \infty} \frac{\phi'_{n,\alpha_N}}{\alpha_N} = \exp\{S_n\} = \mathrm{LR}_n \qquad \mathbb{P}\text{-a.s.}
                \end{align*}

                for each fixed $n \in \mathbb{N}$.
            \end{proof}
\end{document}